\documentclass[12pt, reqno]{amsart}
\usepackage{amsaddr} 
\makeatletter
\renewcommand{\@setemails}{%
  \mbox{\itshape Email addresses:\space}%
  {\ttfamily\emails}.%
}
\makeatother
\usepackage{graphicx} 

\usepackage{thmtools} 

\usepackage[letterpaper,top=1in,            
  inner=1in,
  outer=1in,
  bottom=1in,
  headsep=4ex, ]{geometry}
\usepackage{parskip}

\usepackage[english]{babel} 
\usepackage[utf8]{inputenc} 
\usepackage[T1]{fontenc} 

\usepackage{amsmath} 
\usepackage{amssymb} 
\usepackage{amsfonts} 

\usepackage{mdframed} 
\mdfsetup{nobreak=true}

\usepackage{csquotes} 

\usepackage{float} 
\usepackage{tikz} 
\usetikzlibrary{cd} 

\usepackage{color} 
\usepackage{dsfont} 
\usepackage{mathrsfs} 
\usepackage{centernot} 
\usepackage{amsthm} 
\usepackage{mathtools} 
\usepackage{hyperref} 
\usepackage{mleftright} 
\mleftright

\usepackage[shortlabels]{enumitem} 
\setenumerate[0]{label=(\roman*)}

\usepackage{aligned-overset} 

\numberwithin{equation}{section} 

\newtheorem{theorem}{Theorem}[section]
\newtheorem*{theorem*}{Theorem}
\newtheorem{proposition}[theorem]{Proposition}
\newtheorem{lemma}[theorem]{Lemma}
\newtheorem{corollary}[theorem]{Corollary}

\theoremstyle{definition}
\newtheorem{definition}[theorem]{Definition}
\newtheorem{remark}[theorem]{Remark}
\newtheorem{example}[theorem]{Example}

\newtheorem{question}{Question}[section]
\newtheorem*{notation}{Notation}
\newtheorem{assumption}{Assumption}

\renewcommand{\subseteq}{\subset}
\renewcommand{\geq}{\geqslant}

\renewcommand{\leq}{\leqslant}

\usepackage{caption} 

\renewcommand{\epsilon}{\varepsilon}
\newcommand{\R}{\mathbb{R}}
\newcommand{\N}{\mathbb{N}}

\newcommand{\1}{\mathbf{1}}
\newcommand{\PP}{\mathbb P}
\newcommand{\QQ}{\mathbb Q}
\newcommand{\EE}{\mathbb E}

\newcommand*{\LL}{\mathop{}\!\mathcal{L}}
\newcommand*{\PPP}{\mathop{}\!\mathrm{P}}
\newcommand*{\QQQ}{\mathop{}\!\mathrm{Q}}
\newcommand{\proj}{\operatorname{proj}}

\newcommand{\Law}{\operatorname{Law}}

\newcommand{\W}{\operatorname{W}}

\newcommand{\HH}{\operatorname{H}}

\newcommand*{\diff}{\mathop{}\!\mathrm{d}}

\DeclarePairedDelimiter{\abs}{\lvert}{\rvert}
\DeclarePairedDelimiter{\norm}{\lVert}{\rVert}
\makeatletter
\let\oldabs\abs
\def\abs{\@ifstar{\oldabs}{\oldabs*}}
\makeatother
\makeatletter
\let\oldnorm\norm
\def\norm{\@ifstar{\oldnorm}{\oldnorm*}}

\newcommand{\cal}{\mathcal}

\usepackage{tikz}
\usetikzlibrary{calc}
\usetikzlibrary{intersections}
\usetikzlibrary{positioning}
\usetikzlibrary{hobby}

\usetikzlibrary{shadings}

\usetikzlibrary{decorations}

\usepackage{pgfplots}
\usetikzlibrary{intersections, pgfplots.fillbetween}
\pgfplotsset{compat=1.18}

\date{\today}

\title{Quantitative QSD convergence in 1-Wasserstein distance via the Föllmer drift}

\author{Joaquín Fontbona\textsuperscript{\ensuremath{\dagger}} \and Pablo López-Rivera\textsuperscript{\ensuremath{\ddagger}}}

\address{\textsuperscript{\ensuremath{\dagger}}Centro de Modelamiento Matemático, Universidad de Chile \& IRL 2807 - CNRS\\
\textsuperscript{\ensuremath{\ddagger}}Department of Mathematics, UCLA}
\email{\textsuperscript{\ensuremath{\dagger}}fontbona@dim.uchile.cl}

\email{\textsuperscript{\ensuremath{\ddagger}}plopez@math.ucla.edu}

\calclayout

\begin{document}

\begin{abstract}
We develop a novel pathwise approach to study the convergence of the law of killed diffusion processes conditioned on non-absorption, towards a quasi-stationary distribution (QSD) as time goes to infinity. We start from the general observation that the dynamics of an absorbed Markov process conditioned upon survival up to time $T>0$ is the minimizer of the pathwise relative entropy with respect to its unconditioned dynamics, under a simple distributional constraint at that time; in other words, a F\"ollmer process. We then show how this result applies to a Brownian diffusion process softly-killed at a state-dependent regular rate, and characterize the associated drift change. In the case when the diffusion process is moreover reversible, we leverage this idea and recent results on the propagation of weak log-concavity of HJB semigroups to prove that, under strict asymptotic convexity of the potential, the conditioned dynamics satisfy a contractivity property in $1$-Wasserstein distance, uniformly in $T>0$. Under a general ergodicity condition on the associated Feynman-Kac semigroup, we then establish the existence of a QSD with a large domain of attraction, and the exponentially fast convergence to it of the conditioned semigroup in the $1$-Wasserstein distance as $T$ goes to infinity. Finally, we deduce the exponentially fast convergence, also in $1$-Wasserstein distance, of the law of the corresponding Q-process towards its equilibrium. 
\end{abstract}

\maketitle

\setlength{\parskip}{0.5\baselineskip}
\setlength{\parindent}{20pt}

\section{Introduction and main results}
\label{sec:introduction}

Consider a right-continuous Markov process $X = (X_t)_{t \geq 0}$ in $\R^d$ defined on some probability space $(\Omega, {\cal F}, \PP)$ with $\partial \subset \R^d$ an absorbing Borel set for $ X$; that is, such that
$$\forall t \geq0, \quad {X}_t= {X}_{t\wedge\tau_{\partial}} ,  \,  \PP_{\nu}\mbox{-a.s.}$$
Here and in all the sequel, $\PP_{\nu}$ denotes  the law of $ X$ when initialized with a distribution $\nu$ on $\R^d$, and $\tau_{\partial}$ stands for the stopping time (with respect to the natural filtration of $X$) given by
\begin{equation*}
\label{eq:taudelta}
\tau_{\partial}\coloneq \inf \{t\geq 0: {X}_t\in \partial\}.
\end{equation*}
The long-time behavior of the law of  $ X_t$ conditioned on the event $\{\tau_{\partial}>t\}$ of not yet being absorbed by time $t\geq 0$ has historically been an active research domain in the theory of stochastic processes. Central  to this problem is the notion of {\it quasi-stationary distribution} (QSD), namely, a probability law $\alpha$ supported on $\R^d \setminus \partial$ such that 
\begin{equation}\label{eq:QSDdef}
\forall t \geq 0, \quad \alpha = \PP_{\alpha}( X_t\in \cdot | \tau_{\partial}>t). 
\end{equation}
Conditions for the existence and uniqueness of a QSD $\alpha$ and, moreover, for $\alpha$ to also arise as the limit (in a suitable sense) of the law of $X_t$ conditioned on not being absorbed after an arbitrarily long time, that is,
 \begin{equation}\label{eq:QSDconvergence}
   \PP_{\nu}( X_t\in \cdot | \tau_{\partial}>t) \xrightarrow[t \to +\infty]{}  \alpha,
\end{equation}
have been extensively studied in the literature too. We refer the reader to \cite{MR2994898, MR2986807} for general background and historical references; see also Section \ref{subsec:literatureQSD} for a bibliographical discussion on quasi-stationarity.

In the present work, we are interested in the pathwise dynamics of the process $X$ conditioned on non-absorption on finite time intervals, and in how properties of this dynamics determine and can be used to obtain quantitative rates for the convergence \eqref{eq:QSDconvergence}.

\subsection{Main results}

A key observation underlying our approach is a yet unexploited variational interpretation of the conditioned process, established in Theorem \ref{thm:main_i}: the pathwise law of the process conditioned on non-absorption is the minimizer of the relative entropy with respect to the original (unconditioned) law of $X$, under a constraint on the law of a simple observable at time $T$. This brings forward the possibility of studying Markov processes conditioned on non-absorption as particular instances of \textit{F\"ollmer processes} which is the main motivation of the present work; see Section \ref{subsubsec:the_gaussian_follmer_process} for this notion and related references. 

From this starting point, our goal is to establish a general methodology and specific tools to develop a pathwise approach for obtaining quantitative QSD convergence results. To that end, we focus here on softly-killed Brownian diffusion processes conditioned on survival, showing first that they correspond to a particular case of the general framework of absorbed Markov processes covered by Theorem \ref{thm:main_i}. We then give  
in Theorem \ref{thm:main_ii_0} a pathwise description of the conditioned measure in this case and, in particular, of the \textit{F\"ollmer drift} providing the optimal correction on the original dynamics, needed to achieve the distributional constraint associated with the conditioning. This result also provides an interpretation of the pathwise entropy in terms of the solution to a Hamilton-Jacobi-Bellman (HJB) equation.

The main goal of our program is to exploit the explicit dynamics of the conditioned process given by Theorem \ref{thm:main_ii_0}, to provide conditions for the convergence \eqref{eq:QSDconvergence} to take place exponentially fast in the transport $1$-Wasserstein distance, for a large class of initial distributions $\nu$. This is achieved, in the reversible case, in Theorem \ref{theo:domain_of_attraction_W1}, which is the main result of our work. Its proof  is based on two intermediate results of independent interest. Specifically, we first leverage the known connection between entropy minimization and stochastic optimal control together with recent results on the propagation of weak log-concavity of HJB semigroups  \cite{chaintron25propagation}, to prove that, under a condition of strict asymptotic convexity of the potential, the stochastic flow associated with the conditioned dynamic satisfies a contractivity estimate in $1$-Wasserstein distance uniformly in $T>0$. This is the content of Theorem \ref{thm:contraction_dynamics_intro}  and Corollary \ref{coro:contractionconseq}. The second intermediate result is Theorem \ref{theo:domain_of_attraction}, which establishes an equivalent characterization of the existence of a QSD  with a suitable domain of attraction (including initial distributions with sufficiently integrable densities relative to the reversible distribution), in terms of a general ``weak ergodicity'' condition on the associated Feynman-Kac semigroup. Under reversibility of the original process, this weak ergodicity condition is indeed verified in several classical as well as recently studied settings, in which existence of a QSD and the convergence \eqref{eq:QSDconvergence} in other probability metrics has been shown to hold; see Section \ref{subsec:literatureQSD} for references and discussion in this regard.
Theorem \ref{theo:domain_of_attraction_W1} will grant the exponentially fast convergence \eqref{eq:QSDconvergence} in the transport $1$-Wasserstein distance under a slightly stringent weak-ergodicity condition of this type. 

Finally, we focus on the so-called \textit{Q-process}, widely studied in the framework of quasi-stationary distributions, which can be understood as the process $X$ conditioned on never being killed. Always in the reversible case and under strict asymptotic convexity assumptions, we prove in Theorem \ref{prop:q_process_exp_fast_intro} the exponentially fast convergence, also in the $1$-Wasserstein distance, of the law of the Q-process towards its equilibrium.

We end this section with an outline of the structure of the paper and with some basic notation to be used in the sequel; additional notation will be set when required.

\subsection{Organization of the paper}
\label{subsec:organization}

In Section \ref{sec:related_literature}, we review the literature relevant to our work and discuss some connections between them. In Section \ref{sec:variational}, we state the variational characterization of conditioned processes in Theorem \ref{thm:main_i}. In Section \ref{subsec:condiffkillfoll}, we moreover characterize the associated F\"ollmer drift in Theorem \ref{thm:main_ii_0} and state the entropy representation formula for it in Corollary \ref{cor:valueentropy}. In Section \ref{sec:cond_contractivity}, we state Theorem \ref{thm:contraction_dynamics_intro} and Corollary \ref{coro:contractionconseq} on the  Wasserstein contractivity properties of the conditioned dynamics, in terms of asymptotic convexity assumptions which are also detailed in that section. In Section \ref{sec:qsd}, we first introduce the notion of weak $p$-ergodicity, under which we establish in Theorem \ref{theo:domain_of_attraction} the existence of a QSD and sufficient conditions for a distribution to belong to its domain of attraction. We end Section \ref{sec:qsd} by stating  Theorem \ref{theo:domain_of_attraction_W1} on the Wasserstein convergence of the conditioned semigroup. In Section \ref{sec:q_process}, we introduce the Q-process and state Theorem \ref{prop:q_process_exp_fast_intro}. All the  proofs are provided in Section \ref{sec:proofs}, along with the additional technical background that is required. 

\begin{notation}
$~$
\begin{itemize}
    \item We write $\N \coloneq \{0,1,2,\dots\}$ and $\N^*\coloneq \N \setminus \{0\}$. $\R$ denotes the set of real numbers. Both $\R_+$ or $\R_{\geq 0}$ denote the set of nonnegative real numbers, and we denote by $\R_{>0}$ the set of positive real numbers.

    \item We write $ X_{[0,T]} \coloneq ({X}_t)_{t \in [0, T]}$ and denote by $\PP^T_\nu $ the law of $X_{[0,T]}$ on the Skorokhod space $\mathbb{D}_T \coloneq \mathbb{D}([0,T];\R^d)$.
The associated expectation is written  $\EE^T_\nu$. When $\nu=\delta_x$, we use $x$ as a subindex instead. 

    \item If $E$ is a Polish space, we denote by ${\cal P}(E)$ the set of Borel probability measures on $E$, by $\mathbb{B}(E)$ the class of real-valued bounded and measurable functions on $E$, and by $C_\mathrm{b}(E)$ the class of real-valued bounded and continuous functions on $E$. For $E=\R^d$,  ${\cal P}_1(\R^d)$ denotes the subset of ${\cal P}(\R^d)$ of probability laws with finite first moment. 

   \item We denote by $\Law(Z)$ the law of a random variable or process $Z$. Moreover, if $\Law(Z) = \mathbb{Q}$ and $Y$ is a random variable of the form  $Y=\Phi(Z)$ for some measurable function $\Phi$, the law of $Y$ is denoted $\Law_{\mathbb{Q}}(Y)$.

   \item For $\mu \in \mathcal{P}(\R^d)$ and $\phi \colon \R^d \to \R$, we write $\langle \mu, \phi\rangle \coloneq \int_{\R^d} \phi \diff \mu$. For $\psi, \phi \in L^2(\mu)$, their inner product in the Hilbert space $L^2(\mu)$ is denoted by $\langle \psi, \phi\rangle_{\mu} \coloneq \int_{\R^d} \psi \phi \diff \mu$.
\end{itemize}
\end{notation}

\subsection*{Acknowledgments}
\label{subsec:acknowledgments}

We would like to thank Nicolas Champagnat and Giovanni Conforti for their interest and for motivating discussions. JF acknowledges partial support from ANID Fondecyt Project 1242001 and  ANID-BASAL CMM FB21000.

\section{Connection to the literature}
\label{sec:related_literature}

In this section we discuss the literature to which our work relates, focusing on quasi-stationarity, F\"ollmer processes, and weak asymptotic convexity. We also highlight the contributions of our work in those contexts. 

\subsection{Quasi-stationarity}
\label{subsec:literatureQSD}

Quasi-stationarity is as of today a broad subject, so we focus here on the diffusive case; see e.g. the survey article \cite{MR2994898} or the book \cite{MR2986807} for wider discussions and further references. The study of quasi-stationarity in the diffusive setting was pioneered in the one-dimensional case in \cite{MR137143} and in the multi-dimensional setting in \cite{MR781410, MR970476}; see also \cite{MR1303922, MR2262851, MR2917771, MR2561437, MR1642845, MR1781008, MR2606515, MR3012095, MR3833098, MR4014294}, to name a few, for further developments and applications, including in mathematical ecology and population models. The study of the convergence to the Q-process in the diffusive case can be traced back to \cite{MR781410, MR970476, MR1349173}.

On the quantitative side, the works e.g. \cite{MR2986807, MR4077228, MR3449390, MR3663104, zbMATH07450813, zbMATH07730868, zbMATH07515386, MR4546021, MR4756950, guillin2026longtimebehaviorkilled, MR5012340, villemonais2026quasicompactnessdominatedkernelsapplication} provide general criteria yielding convergence to a QSD in the total variation distance. In the setting of hard-killed diffusions, exponentially-fast convergence is obtained from a Poincaré inequality and the Bakry-Émery criterion in \cite{MR4278594}, with $1$-Wasserstein bounds also derived in specific cases of additive potentials. Another approach based on functional inequalities was applied to obtain quantitative rates in Wasserstein distance for systems of coupled quantum harmonic oscillators in \cite{MR4627339}. In \cite{MR4095047}, a polynomially-fast rate of convergence in $1$-Wasserstein distance is obtained, for the marginal law of a Brownian motion with drift, conditioned not to have reached $0$.

The work \cite{MR4940326} provides a general criterion for the exponential convergence, with respect to a Wasserstein-type distance induced by a bounded metric, to the QSD of softly-killed processes with a killing rate of finite oscillation, based on the existence of an exponentially penalized contractive coupling (see Assumption (A) in that work). This criterion applies in particular to Bernoulli convolutions and iterated random contracting functions, and is intimately linked to the validity of a global Harnack-type inequality (Assumption (H) in \cite{MR4940326}, shown to hold in Proposition 2.2 therein under their Assumption (A)). We notice that this type of inequality is hard to satisfy uniformly on unbounded (with respect to the Euclidean metric) domains in the diffusive case. Its proof strongly relies on their contractivity condition (A), which is stated in terms of a bounded underlying metric. See, however, the discussion at the end of Subsection  \ref{subsec:lit_weak_cvx} below.

Theorem \ref{theo:domain_of_attraction_W1} in this work provides, to the best of our knowledge, a novel criterion for the exponential convergence in a Wasserstein-type distance induced by an unbounded metric in the case of reversible diffusions. We also remark that the assumptions on the drift and the killing rate in Theorem \ref{theo:domain_of_attraction_W1} concern asymptotic bounds on weak convexity profiles, hence going beyond strong convexity conditions as implicitly assumed in the work \cite{MR4278594}.

\subsection{F\"ollmer processes}
\label{subsubsec:the_gaussian_follmer_process}

We recall here the classic notions of F\"ollmer process and F\"ollmer drift,  named so after seminal works by Hans Föllmer \cite{MR798318, MR838561, MR983373}, which provide inspiration and a general theoretical framework for the ideas developed in this work.  
 
Only for the purpose of this subsection, denote by $(\Omega, \mathcal{F}, (\mathcal{F}_t)_{t \in [0,1]}, \PP)$ the filtered Wiener space in $\R^d$,  so that the canonical process $X = (X_t)_{t \in [0,1]}$ is a standard $d$-dimensional Brownian motion under  $\PP$. In particular, $\Law_\PP(X_1) = \gamma_d$, where $\gamma_d$ denotes the $d$-dimensional standard Gaussian measure. Let $\EE$ denote the expectation with respect to $\PP$ and by  $(\PPP_t)_{t \in [0,1]}$ the (Brownian) heat semigroup. Consider also $\eta \ll \gamma_d$ with $g \coloneq \frac{\diff \eta}{\diff \gamma_d}$ and define the law $\widetilde \PP \in \mathcal{P}(\Omega)$ of the process $X$ \textit{conditioned to have terminal distribution $\eta$}, as follows: for $F\colon \Omega \to \R$ bounded and measurable,
\begin{equation}
\label{eq:law_gaussian_follmer}
\widetilde \EE\left[F(X)\right] \coloneq \EE[F(X)g(X_1)]. 
\end{equation}
Clearly, $\Law_{\widetilde\PP}(X_1) = \eta$. The measure $\widetilde{\PP}$ verifies the following remarkable properties.

\begin{theorem}
\label{thm:follmer}
Let $\eta \in \mathcal{P}(\R^d)$ with $\eta \ll \gamma_d$ and $g \coloneq \frac{\diff \eta}{\diff \gamma_d}$, and define $\widetilde{\PP}$ by \eqref{eq:law_gaussian_follmer}.
\begin{enumerate}
    \item \label{item:i:thm:follmer} If $\HH(\cdot|\PP)\colon \mathcal{P}(\Omega) \to \R_+ \cup \{+\infty\}$ denotes the relative entropy functional with respect to $\PP$, then
    \begin{equation}
    \label{eq:gaussian_follmer_minimal_entropy}
    \HH(\widetilde\PP|\PP) = \inf_{\substack{\QQ \in \mathcal{P}(\Omega) \\ \Law_{\QQ}(X_1) = \eta}} \HH(\QQ| \PP).
    \end{equation}

    \item \label{item:ii:thm:follmer} Consider the process $\widetilde{X} = (\widetilde X_t)_{t \in [0,1]}$ given by
    \begin{equation}
    \label{eq:gaussian_follmer_correction}
    \forall t \in [0,1], \quad\widetilde X_t \coloneq X_t - \int_0^t \nabla \ln \PPP_{1-s}g(X_s) \diff s.
    \end{equation}
    Then $\widetilde{X}$ is a $\widetilde{\PP}$-Brownian motion. In particular,
    \begin{equation}
    \label{eq:gaussian_follmer_dynamics}
    \widetilde\PP\text{-a.s. } \forall t \in [0,1], \quad \diff X_t = \diff \widetilde{X}_t + \nabla \ln \PPP_{1-t}g(X_t) \diff t, \quad X_0 = 0.
    \end{equation}

    \item \label{item:iii:thm:follmer} The relative entropy of $\widetilde \PP$ with respect to $\PP$ admits the following representation:
    \begin{equation}
\label{eq:gaussian_follmer_entropy}
    \HH(\widetilde \PP|\PP) = \widetilde\EE\left[\int_0^1\frac12 \abs{\nabla \ln \PPP_{1-t}g(X_t)}^2\diff t\right].
    \end{equation}
\end{enumerate}
\end{theorem}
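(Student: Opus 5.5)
Part (i) follows from the chain rule for relative entropy, also called disintegration or the additivity of entropy. For any $\QQ$ with $\Law_\QQ(X_1)=\eta$ and $\HH(\QQ|\PP)<\infty$, disintegrate both $\QQ$ and $\PP$ along the terminal value $X_1$. This gives
\begin{equation*}
\HH(\QQ|\PP)=\HH(\eta|\gamma_d)+\int_{\R^d}\HH\bigl(\QQ(\cdot|X_1=y)\,\big|\,\PP(\cdot|X_1=y)\bigr)\,\eta(\diff y)\geq \HH(\eta|\gamma_d).
\end{equation*}
By \eqref{eq:law_gaussian_follmer}, $\frac{\diff\widetilde\PP}{\diff\PP}=g(X_1)$. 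Hence $\widetilde\PP$ has the same bridges as $\PP$, and $\HH(\widetilde\PP|\PP)=\widetilde\EE[\ln g(X_1)]=\int g\ln g\,\diff\gamma_d=\HH(\eta|\gamma_d)$, so the infimum in \eqref{eq:gaussian_follmer_minimal_entropy} is attained. If $\HH(\eta|\gamma_d)=+\infty$, both sides are infinite.

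For (ii), I would use the density process $M_t:=\EE[g(X_1)\mid\mathcal{F}_t]=\PPP_{1-t}g(X_t)$, which follows from the Markov property. This is a nonnegative $\PP$-martingale, and it is strictly positive on $[0,1)$ because the Gaussian kernel has full support and $\eta\neq0$. The function $u(t,x):=\PPP_{1-t}g(x)$ is smooth on $[0,1)\times\R^d$ and solves the backward heat equation $\partial_t u+\frac12\Delta u=0$. Itô's formula then gives $\diff M_t=\nabla u(t,X_t)\cdot\diff X_t=M_t\,\nabla\ln u(t,X_t)\cdot\diff X_t$. By Girsanov's theorem applied on $[0,t]$ for each $t<1$, the process $\widetilde X$ in \eqref{eq:gaussian_follmer_correction} is a $\widetilde\PP$-Brownian motion on $[0,1)$.

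\textbf{The main obstacle} is the endpoint $t=1$. There $u(1,\cdot)=g$ may be irregular or vanish, and the integral of the drift may a priori diverge. I would handle it through the entropy bound. Since $\widetilde\PP(M_1>0)=1$, we have $\widetilde\PP(\inf_{t\le1}M_t>0)=1$, so the drift is well defined along $\widetilde\PP$-a.e. path on every $[0,t]$, $t<1$. Part (iii), proved on $[0,t]$ and passed to the limit as $t\uparrow1$, gives $\widetilde\EE\int_0^1|\nabla\ln u|^2\diff s<\infty$ whenever $\HH(\eta|\gamma_d)<\infty$. The drift integral therefore converges in $L^2(\widetilde\PP)$, and continuity of paths extends the Brownian property to $[0,1]$. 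If the entropy is infinite, I would instead use the $\widetilde\PP$-a.s. finiteness of the quadratic variation of $\ln M$, which follows from the fact that $\widetilde\PP$-a.s. $M$ stays bounded away from zero.

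For (iii), fix $t<1$ and write $\ln M_t=\int_0^t\nabla\ln u\cdot\diff X_s-\frac12\int_0^t|\nabla\ln u|^2\diff s$. Substituting $\diff X_s=\diff\widetilde X_s+\nabla\ln u\,\diff s$ turns this into
\begin{equation*}
\ln M_t=\int_0^t\nabla\ln u\cdot\diff\widetilde X_s+\frac12\int_0^t|\nabla\ln u|^2\diff s.
\end{equation*}
To take $\widetilde\EE$, I would localize with stopping times so that the stochastic integral has zero mean. Since $\widetilde\EE[\ln M_t]=\HH(\widetilde\PP|_{\mathcal F_t}\,|\,\PP|_{\mathcal F_t})$, this yields $\HH(\widetilde\PP|_{\mathcal F_t}\,|\,\PP|_{\mathcal F_t})=\frac12\widetilde\EE\int_0^t|\nabla\ln u|^2\diff s$. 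Finally, let $t\uparrow1$. The left side increases to $\HH(\widetilde\PP|\PP)$ by the martingale convergence of relative entropies along the filtration, which is lower semicontinuous and monotone. The right side converges by monotone convergence, which proves \eqref{eq:gaussian_follmer_entropy}.
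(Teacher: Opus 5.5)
Your proposal is correct. The paper does not prove Theorem \ref{thm:follmer}: it is quoted as classical background from F\"ollmer's work. The paper's own arguments appear only for its analogues, Theorem \ref{thm:main_i}, Theorem \ref{thm:main_ii_0} and Corollary \ref{cor:valueentropy}, so the comparison is with those.

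Your parts (ii)--(iii) follow the same template as the paper. You identify the density martingale through the Markov property (your $\PPP_{1-t}g(X_t)$ plays the role of $Z_t\propto \1_{\{N_t=0\}}f^T(t,X_t)$). You apply It\^o's formula to its logarithm, read off the drift from Girsanov, and compute $\widetilde\EE[\ln Z]$ after localizing away the stochastic integral.

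For (i) the routes differ. Transposed to this setting, the paper's proof of Theorem \ref{thm:main_i} notes that every admissible $\QQ\ll\PP$ satisfies $\EE[\diff\QQ/\diff\PP\,|\,X_1]=g(X_1)$, and then applies conditional Jensen; this handles every convex $\varphi$ at once. Your disintegration is specific to relative entropy. In exchange, it gives the exact excess $\int\HH(\QQ(\cdot|X_1=y)|\PP(\cdot|X_1=y))\,\eta(\diff y)$, which identifies the minimizer as the mixture of Brownian bridges with endpoint law $\eta$.

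The genuine difference is the endpoint. In the paper's setting the terminal datum is $f^T(T,\cdot)\equiv 1$, and Assumption \ref{ass:uniqueFK} gives a classical (and, by Feynman--Kac, positive) solution on all of $[0,T]$; the only new feature there is the jump factor $\1_{\{N_t=0\}}$. Here there are no jumps, but $g$ is arbitrary, may vanish, and $\nabla\ln\PPP_{1-t}g$ may blow up as $t\uparrow 1$.

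Your handling of the endpoint is sound. Since $\widetilde\PP(M_1>0)=1$ and a nonnegative continuous martingale is absorbed at $0$, you get $\inf_{t\le 1}M_t>0$ $\widetilde\PP$-a.s. Combined with $[M]_1<\infty$, this gives $[\ln M]_1<\infty$, so the drift is a.s. integrable on $[0,1]$. This argument works whether or not the entropy is finite, so the detour through (iii) in the finite-entropy case is unnecessary. The monotone convergence of entropies along $\mathcal F_t\uparrow\mathcal F_1$ correctly closes (iii).

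Two small repairs are needed.

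\begin{enumerate}
\item Since $g$ is only in $L^1(\gamma_d)$, $u(0,x)=\PPP_1 g(x)$ can be $+\infty$ for $x\neq 0$ (e.g.\ $g\propto e^{\abs{y}^2/2}(1+\abs{y})^{-d-1}$). So $u$ is smooth only on $(0,1)\times\R^d$, not $[0,1)\times\R^d$. Apply It\^o on $[\epsilon,t]$ and let $\epsilon\downarrow 0$ using continuity of $M$ at $0$. Alternatively, use the martingale representation $M_t=1+\int_0^t H_s\cdot\diff X_s$, with $H_s=\nabla u(s,X_s)$ for $s>0$, to get integrability of the drift near $s=0$.
\item At fixed $t<1$, removing the localization in $\widetilde\EE[\ln M_{\tau_n}]$ needs the same monotone convergence of entropies along $\mathcal F_{\tau_n}\uparrow\mathcal F_t$ that you invoke for $t\uparrow 1$. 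Alternatively, take $\tau_n\uparrow 1$ directly and do both limits at once.
\end{enumerate}
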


That is, part \ref{item:i:thm:follmer}  states that the law $\widetilde{\PP}$ has minimal entropy among all the path laws with terminal distribution $\eta$;  \ref{item:ii:thm:follmer} provides the drift correction under the measure change \eqref{eq:gaussian_follmer_correction} so that the canonical process solves the Föllmer stochastic differential equation \eqref{eq:gaussian_follmer_dynamics}; and finally, \ref{item:iii:thm:follmer} provides a formula for the value of the relative entropy $\HH(\widetilde \PP|\PP)$ in terms of the \textit{Föllmer drift} \eqref{eq:gaussian_follmer_entropy}. Bringing all three statements together reveals a key insight: the \textit{F\"ollmer process} defined by \eqref{eq:gaussian_follmer_dynamics} is the (only) one that realizes the distributional constraint $\Law_{\widetilde\PP}(X_1) = \eta$ at minimum possible ``energy'' given by the right-hand side of \eqref{eq:gaussian_follmer_entropy}.

Despite its interest from several research areas ranging from stochastic analysis \cite{MR1095665, MR1262893, MR1919610, leonard11stochastic}, geometric and functional inequalities \cite{MR3112438, MR3782065, MR3485296, MR3904394, MR4116725, MR4126934, MR4343907, MR4663509, MR4282081, MR4797372, MR4864872, MR4989825, MR5103350}, optimal transport \cite{MR2873864, MR3121631, MR3489825}, and statistics \cite{pmlr-v99-tzen19a, bortoli2021diffusion, pmlr-v235-chen24n}, the relation between F\"ollmer processes and Markov processes conditioned on non-absorption remains, to our knowledge, largely unexplored.

Theorems \ref{thm:main_i} \ref{thm:main_ii_0} and Corollary \ref{cor:valueentropy} below together provide an analog of Theorem \ref{thm:follmer} for the path law of a softly-killed Brownian diffusion process conditioned on survival; see also the discussions in Sections \ref{sec:variational} and \ref{subsec:condiffkillfoll}.

\subsection{Weak asymptotic convexity}
\label{subsec:lit_weak_cvx}

The notions of weak convexity profile and strict asymptotic convexity of a potential,  presented in Sections \ref{sec:cond_contractivity} and \ref{sec:weak_convexity_Eberle}, are key elements of our main result. These notions can be traced back to the works of Eberle \cite{MR2843007, eberle16reflection} (see also Theorem \ref{thm:eberle} below), which unveiled their connection to certain contractivity properties in Wasserstein-type distances for diffusion processes, obtained via reflection couplings (see \cite{lindvall86coupling, MR1924231}).  We refer the reader to \cite{MR3330820, MR3980913, MR3939573, MR4133372, zbMATH08244987} for other applications. 

Of particular relevance for the present work is the use of these ideas in the context of stochastic control and the propagation of strict log-concavity of HJB semigroups \cite{MR4674060, MR4771110, ConfortiLackerPal2025, MR5022995, conforti25km, chaintron25propagation};  Proposition \ref{prop:conforti} below summarizes the main results from \cite{chaintron25propagation} we will rely on. We notice that, although  those results are obtained on the basis of the stochastic optimal control interpretation of the HJB equation, it is also possible to state them directly in terms of the entropy viewpoint presented here. In fact, Theorem \ref{thm:contraction_dynamics_intro} about the contractivity of the conditioned dynamics ultimately is a consequence of the asymptotic behavior of the relative pathwise entropy studied in \ref{thm:main_i}, as a function of the initial position of the process (see Remark \ref{rem:asymptotEntropy}). 

Finally,  let us mention that we believe  that reflection coupling techniques as in \cite{conforti25km,chaintron25propagation} could in principle be used to construct penalized contractive coupling satisfying conditions analogue to  Assumption (A) in \cite{MR4940326} with respect to various (but non necessarily bounded) distances. The in-depth development of this connection  is left for future work.

\section{Variational characterization of the conditioned dynamics}
\label{sec:variational}

The following basic assumption provides the general framework for our first result.

\begin{assumption}
\label{ass:a1}
$\partial \subset \R^d$ is a Borel set, $X= ({X}_t)_{t \geq 0}$ is a right-continuous Markov process  in $\R^d$ such that ${X}_t={X}_{t\wedge \tau_{\partial}}$ for all $t\geq 0$, and its initial distribution $\nu \in {\cal P}(\R^d)$ satisfies $\nu(\R^d\setminus \partial) = 1.$
\end{assumption}

For $T > 0$, the \textit{law of $X$ conditioned on non-absorption up to time $T$} is the probability measure $\widetilde{\PP}^T_\nu\ll \PP^T_\nu$ on $\mathbb{D}_T$ defined by the relation 
\begin{equation}
\label{eq:law_hat_x}
\forall F \in \mathbb{B}(\mathbb{D}_T), \quad \widetilde{\EE}^T_\nu\left[F({X}_{[0,T]})\right] \coloneq \frac{1}{\PP^T_\nu(\tau_{\partial}>T)}\EE^T_\nu\left[F({X}_{[0,T]}) \mathbf{1}_{\{\tau_{\partial}>T\}}\right].
\end{equation}

We start by providing a variational characterization  of the measure $\widetilde{\PP}^T_\nu$, namely as the minimizer of the pathwise entropy relative to  the unconditioned law $\PP^T_\nu$, among all probability laws $\QQ$ on  $\mathbb{D}_T$ satisfying a simple distributional
constraint  at time $T$.  
More precisely,  let us introduce the $\{0,1\}$-valued 
process $N=(N_t)_{t\geq 0}$ given by 
\begin{equation}
\label{eq:NtindXt}
\forall t \geq 0, \quad N_t \coloneq \mathbf{1}_{\{{X}_t  \in \partial\}},
\end{equation} 
which by Assumption \ref{ass:a1} satisfies ${N}_t= {N}_{t\wedge\tau_{\partial}}$ and $\{\tau_{\partial} >t\}= \{ N_t = 0 \}$ for all $t \geq 0$ $\PP_{\nu}$-a.s. Note that $$\Law_{\mathbb{\widetilde{P}}^T_{\nu}}( N_T )=\delta_0.$$ Our first result reads as follows.

\begin{theorem}
\label{thm:main_i}
Fix $T > 0$ and let Assumption \ref{ass:a1} hold. Let $\varphi \colon \R \to \R\cup \{+\infty\}$ be a convex function and define the functional $\mathbb{H}^T_\varphi \colon \mathcal{P} (\mathbb{D}_T)\to \R\cup \{+\infty\}$ by
 \[\forall \QQ \in \mathcal{P} (\mathbb{D}_T), \quad  \mathbb{H}^T_\varphi (\mathbb{Q}) \coloneq \begin{cases} \EE_{\nu}^T \left[\varphi\left(\dfrac{\diff \mathbb{Q}}{\diff  \PP^T_{\nu}}\right)\right], & \text{if } \mathbb{Q}\ll \PP^T_{\nu} \\ +\infty, & \text{otherwise}.\end{cases}\]
Then we have
\begin{equation}\label{eq:variational}\mathbb{H}^T_\varphi (\widetilde{\PP}^T_{\nu}) = \min_{\substack{\QQ \in \mathcal{P}(\mathbb{D}_T) \\ \Law_{\mathbb{Q}}(N_T)= \delta_0}}\mathbb{H}^T_\varphi (\mathbb{Q})
\end{equation}
and the minimizer is unique if moreover $\varphi$ is strictly convex. In particular, if we let $\varphi(r) = r \ln r \1_{\{r \geq 0\}} + \infty \1_{\{r < 0\}}$, we see that  $\mathbb{Q}=\widetilde{\PP}^T_{\nu}$ is the unique minimizer  of the  relative entropy $\mathbb{H}^T_\varphi(\mathbb{Q}) \eqcolon \HH(\mathbb{Q} | \PP^T_\nu)$ over the set $\{\mathbb{Q} \in {\cal P} ( \mathbb{D}_T): \Law_{\mathbb{Q}}(N_T)= \delta_0\}$.
\end{theorem}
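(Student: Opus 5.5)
The argument is a Jensen's inequality computation on the event of survival. Write $A \coloneq \{\tau_\partial > T\} = \{N_T = 0\}$ and $p \coloneq \PP^T_\nu(A)$. If $p = 0$, then $\widetilde{\PP}^T_\nu$ is not defined and the constraint set contains no measure absolutely continuous with respect to $\PP^T_\nu$, so we assume $p>0$. By \eqref{eq:law_hat_x}, $\widetilde{\PP}^T_\nu$ has density $\widetilde D = p^{-1}\mathbf{1}_A$ with respect to $\PP^T_\nu$. Hence
\[
\mathbb{H}^T_\varphi(\widetilde{\PP}^T_\nu) = p\,\varphi(1/p) + (1-p)\,\varphi(0).
\]

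Now take an admissible $\QQ$, that is, $\Law_\QQ(N_T) = \delta_0$. If $\QQ \not\ll \PP^T_\nu$ there is nothing to prove, so assume $D = \diff\QQ/\diff\PP^T_\nu$ exists. The constraint says $\QQ(A) = 1$, so $D = 0$ $\PP^T_\nu$-a.s. on $A^c$ and $\EE^T_\nu[D\mathbf{1}_A] = 1$. Splitting the expectation over $A$ and $A^c$ gives
\[
\EE^T_\nu[\varphi(D)] = \EE^T_\nu[\varphi(D)\mathbf{1}_A] + (1-p)\varphi(0).
\]

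The key step is to apply Jensen's inequality for the convex function $\varphi$ under the conditional probability $\PP^T_\nu(\cdot\,|\,A)$. Since $\EE^T_\nu[D \,|\, A] = 1/p$, this gives
\[
\EE^T_\nu[\varphi(D)\mathbf{1}_A] = p\,\EE^T_\nu[\varphi(D)\,|\,A] \geq p\,\varphi(1/p),
\]
which proves $\mathbb{H}^T_\varphi(\QQ) \geq \mathbb{H}^T_\varphi(\widetilde{\PP}^T_\nu)$. Since $\widetilde{\PP}^T_\nu$ is itself admissible, the minimum is attained there and \eqref{eq:variational} follows.

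The only technical care is in justifying Jensen for an extended-valued $\varphi$ whose expectation might be $+\infty$ or ill-defined. I would handle this by taking an affine minorant $\ell$ of $\varphi$ that is tangent at $1/p$. Such an $\ell$ exists whenever $\varphi(1/p)<\infty$, since $1/p$ then lies in the effective domain; one may need a one-sided supporting line if $1/p$ is a boundary point. Then $\varphi(D)\geq \ell(D)$, and $\ell(D)$ is integrable on $A$ because $D \in L^1$. So the negative part of $\varphi(D)$ is integrable, and Jensen holds with values in $\R\cup\{+\infty\}$. If $\varphi(1/p)=+\infty$, a separate check is needed: the conditional law of $D$ given $A$ has mean $1/p$, so it must put mass outside the effective domain, or the domain must be an interval with $1/p$ at an excluded endpoint, in which case $D = 1/p$ a.s. on $A$. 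Either way $\mathbb{H}^T_\varphi(\QQ) = +\infty$ as well, and equality holds trivially.

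For uniqueness when $\varphi$ is strictly convex, equality in Jensen forces $D$ to be $\PP^T_\nu(\cdot\,|\,A)$-a.s. constant on $A$, necessarily equal to $1/p$. Together with $D=0$ on $A^c$, this gives $D = \widetilde D$, so $\QQ = \widetilde{\PP}^T_\nu$. This uses finiteness of the minimal value; if the value is $+\infty$, uniqueness is interpreted accordingly. For the entropy case, $\varphi(r)=r\ln r$ is strictly convex with $\varphi(0)=0$ and $\varphi(1/p)$ finite, so $\widetilde{\PP}^T_\nu$ is the unique minimizer and $\HH(\widetilde{\PP}^T_\nu|\PP^T_\nu) = -\ln p$. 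The main obstacle is only the bookkeeping of integrability and extended values in the Jensen step; the structure of the argument is elementary.
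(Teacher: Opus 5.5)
Your proof is correct and follows the paper's argument. The paper applies conditional Jensen given $\sigma(N_T)$ with $\EE[D_T\,|\,N_T]=\mathbf{1}_{\{N_T=0\}}/\PP^T_\nu(N_T=0)$, which is exactly your split over $A$ and $A^c$ followed by Jensen under $\PP^T_\nu(\cdot\,|\,A)$. Your equality-case argument for strictly convex $\varphi$, with the caveat that the minimal value must be finite, supplies the uniqueness claim that the paper's proof does not write out.

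One small slip: a supporting line at $1/p$ need not exist if $1/p$ is an endpoint of the domain of $\varphi$ with infinite one-sided slope. This does not hurt the proof. Any affine minorant of $\varphi$ already gives integrability of $\varphi(D)^-$. In that endpoint case, your own argument (mean $1/p$ forces $D=1/p$ a.s.\ on $A$ whenever $\varphi(D)<\infty$ there) handles it.
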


The previous variational characterization connects the conditioned process with the framework of F\"ollmer processes discussed in Section \ref{subsubsec:the_gaussian_follmer_process}. Theorem \ref{thm:main_i} is in part inspired by extensions of those ideas developed in \cite{MR1919610}. It also suggests a possible pathwise viewpoint to study the behavior of the conditioned dynamics as $T\to +\infty$, and in particular, the QSD convergence  \eqref{eq:QSDconvergence}. In what follows, our aim is to develop in detail this pathwise approach and apply it to study the dynamics of softly-killed Brownian diffusion processes conditioned on survival.

The proof of Theorem \ref{thm:main_i} is deferred to Section \ref{subsec:proof_main_i}.

\section{Conditioned diffusions with soft-killing  as F\"ollmer processes}\label{subsec:condiffkillfoll}

Consider a locally bounded nonnegative Borel function $\rho \colon \R^d \to \R_+$. The Markov process $X$ \textit{softly killed at state-dependent rate $\rho$} is classically  defined as the process equal to  $X$ on the interval $[0,\tau)$, and equal to a cemetery state $\dagger  \notin \mathbb{R}^d$ on the interval $[\tau, +\infty)$, where the  \textit{killing time} $\tau$ is a random variable taking values in $\R_+ \cup \{+\infty\}$ such that 
\begin{equation}
\label{eq:distribution_tau_exponential1}
\forall T>0, \quad \PP(\tau > T | X_{[0,T]})  = \exp\left(-\int_0^T \rho(X_s)\diff s\right), 
\end{equation}
 defined in a suitable enlargement of the probability space $(\Omega, {\cal F}, \PP)$ (here equally denoted for simplicity). A classic way of doing this is introducing an exponential random variable  $\mathcal{E}\sim \mathrm{exp}(1)$ independent of $X$, and setting $\tau \coloneq \inf\left\{t > 0 : \int_0^t \rho(X_s)\diff s > \mathcal{E} \right\}$; see Section \ref{subsec:prelim} for an alternative construction that will be more useful for our technical approach.

On the so-called \textit{event of survival} $\{\tau>T\}$, this killed process can be seen as a particular case of the previously considered right-continuous absorbed process, conditioned on non-absorption. More precisely, $X$ is the first coordinate of the absorbed Markov process $\bar{X}$ in $\R^{\bar{d}} \coloneq \R^{d+1}$ given by 
\begin{equation}\label{eq:absorbedXN}\forall t \geq 0, \quad \bar{X}_t\coloneq (X_{t\wedge \tau},N_{t}) \mbox{ with }N_t \coloneq \mathbf{1}_{\{\tau\leq  t\}},
\end{equation}
which has the absorbing set $\bar{\partial }=\R^d\times \N^*$ and the absorbing time $\tau_{\bar{\partial}}=\tau$, under initial distributions of the form $\bar{\nu} \coloneq \nu \otimes  \delta_0\in {\cal P}(\R^{\bar{d}})$ satisfying Assumption \ref{ass:a1} in this setting.

For notational simplicity,   in this framework we identify the probability law  $ \bar{\nu} $ with $\nu$. Moreover we 
write
$\PP^T_{\nu}  $ instead of ${\PP}^T_{\bar{\nu}  }$ for the law of  $\bar{X}_{[0,T]}$,  and $\widetilde{\PP}^T_{\nu}$    instead of $  \widetilde{{\PP}}^T_{\bar{\nu} },$
for its law  when conditioned on the event $\{\tau_{\bar{\partial}}>T\} =\{N_T=0\} $.  We can thus write in this case
$$\frac{\diff  \widetilde{\PP}_{\nu}^T}{\diff  \PP^T_{\nu}}=\frac{\mathbf{1}_{\{N_T =0 \}}}{\PP^T_{\nu}( N_T = 0) }.$$

In the next results will make use of the following additional assumptions and notation.

\begin{assumption}
\label{ass:a2}
$(X_t)_{t\geq 0}$ is a nonexplosive strong solution in $\R^d$ of the stochastic differential equation
\begin{equation}
\label{eq:dynamics_x}
    \diff X_t = \diff B_t + b(X_t)\diff t, \quad X_0 \sim \nu
\end{equation}
with $B = (B_t)_{t \geq 0}$ a standard Brownian motion in $\R^d$ and $X_0$ independent from it. 
Moreover,  the vector field $b\colon \R^d \to \R^d$ is locally Lipschitz so that pathwise uniqueness for \eqref{eq:dynamics_x} holds. 
\end{assumption}

\begin{assumption}
\label{ass:uniqueFK} The function $\rho \colon \R^d \to \R_+$ is continuous and nonnegative, and, for each $T>0$, the {\it backward} Cauchy problem
\begin{equation}
\label{eq:pde_fT}
\begin{cases}
(\partial_t \phi  + \LL^X \phi )(s,y) = \rho (y) \phi(s,y), \quad (s,y)\in  [0,T)\times  \mathbb{R}^d\\
\phi(T,y) = 1,  \quad y\in \mathbb{R}^d , 
\end{cases}
\end{equation}
with the operator $\LL^X$ defined below, has a bounded classical solution with polynomially growing derivatives. 
\end{assumption}

\begin{notation}
$~$
\begin{itemize}
\item We denote by $\LL^X$ the infinitesimal generator of the  process $X$ (without killing). 

\item The right continuous, complete filtration  generated by $B$ and $X_0$ in  (the enlarged space) $(\Omega, \mathcal{F}, \mathbb{P})$   is denoted   $(\mathcal{F}_t)_{t \geq 0}$. 

\item The right continuous, complete filtration  generated by the process $(N_t=\mathbf{1}_{\{\tau\leq  t\}})_{t\geq 0}$ and $(\mathcal{F}_t)_{t \geq 0}$ is denoted 
$(\bar{\mathcal{F}}_t)_{t \geq 0}$. 
\end{itemize}

\end{notation}

It is well known, by the Feynman-Kac formula (see, for example, \cite[Chapter 5, \S 7.B]{MR1121940}), that under Assumption \ref{ass:a2}, the backward Cauchy problem \eqref{eq:pde_fT} has at most one classical solution $\phi(s,y)$. Under Assumptions  \ref{ass:a2} and \ref{ass:uniqueFK} we denote this solution by $f^T(s,y)$. We will prove the following result.

\begin{theorem}
\label{thm:main_ii_0}
Fix $T> 0$ and let $X = (X_t)_{t \in [0,T]}$ and $\rho \colon \R^d \to \R$ satisfy Assumptions \ref{ass:a2} and \ref{ass:uniqueFK}. 
\begin{enumerate}
    \item \label{item:main_ii_0_i}
Under $\widetilde\PP_\nu^T$, the law of $X_0$ is given by the $T$-tilted law $\nu^T$ defined as
\begin{equation}
\label{eq:mu0T}
\diff \nu^T(x) \coloneq \frac{f^T(0,x)}{\int f^T(0,y) \diff \nu(y)} \diff \nu(x).
\end{equation}
 \item \label{item:main_ii_0_ii}
There exists a $\widetilde{\PP}_{\nu}^T-(\bar{\mathcal{F}}_t)_{t\in [0,T]}$ Brownian motion $\widetilde{B} = (\widetilde{B}_t)_{t \in [0,T]}$ such that
\begin{equation}
\label{eq:tildedynamics_x_intro}
\widetilde\PP_\nu^T\text{-a.s. } \forall t \in [0,T], \quad \diff X_t = \diff \widetilde{B}_t + b(X_t)\diff t + \nabla \ln f^T(t,X_t) \diff t.
\end{equation}
 \item \label{item:main_ii_0_iii} For every $t \in [0,T]$, the function $x\mapsto \HH(\widetilde{\PP}_{x}^{T-t}|\PP_{x}^{T-t}) $ is differentiable and the drift  $\nabla \ln f^T(t,x)$ in \eqref{eq:tildedynamics_x_intro} satisfies
\begin{equation}\label{eq:driftentrop}
 \ln  f^T(t,x) = -  \HH(\widetilde{\PP}_{x}^{T-t}|\PP_{x}^{T-t}).   
 \end{equation}
\end{enumerate}
\end{theorem}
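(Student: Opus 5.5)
The plan is to identify the density martingale of $\widetilde\PP^T_\nu$ with respect to $\PP^T_\nu$ on the filtration $(\bar{\mathcal F}_t)$, use the Feynman--Kac solution $f^T$ to write it explicitly, and then apply Girsanov's theorem.

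\emph{Step 1: Feynman--Kac representation.} By Assumptions \ref{ass:a2} and \ref{ass:uniqueFK}, the Feynman--Kac formula gives
\[
f^T(s,y)=\EE\Big[\exp\Big(-\int_s^T\rho(X_r)\diff r\Big)\Big|X_s=y\Big].
\]
By \eqref{eq:distribution_tau_exponential1} and the Markov property, this equals $\PP(\tau>T\mid X_s=y,\tau>s)$.

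\emph{Step 2: the density martingale.} Conditioning on $\bar{\mathcal F}_t$ and using the memorylessness built into $\tau$, I expect
\[
M_t \coloneq \EE^T_\nu\Big[\frac{\mathbf 1_{\{N_T=0\}}}{\PP^T_\nu(N_T=0)}\Big|\bar{\mathcal F}_t\Big]=\frac{\mathbf 1_{\{N_t=0\}}f^T(t,X_t)}{\langle \nu,f^T(0,\cdot)\rangle}.
\]
Taking $t=0$ gives $M_0=f^T(0,X_0)/\langle\nu,f^T(0,\cdot)\rangle$, which is exactly part \ref{item:main_ii_0_i}.

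\emph{Step 3: Girsanov.} I apply Itô's formula to $f^T(t,X_t)$, using the PDE \eqref{eq:pde_fT}:
\[
\diff f^T(t,X_t)=\rho(X_t)f^T(t,X_t)\diff t+\nabla f^T(t,X_t)\cdot \diff B_t .
\]
The product with $\mathbf 1_{\{N_t=0\}}$ adds a jump term. Its compensator is $-\rho(X_t)\mathbf 1_{\{N_{t-}=0\}}\diff t$, since the $\bar{\mathcal F}$-intensity of $N$ is $\rho(X_t)\mathbf 1_{\{t<\tau\}}$; this must be checked in the construction of Section \ref{subsec:prelim}. The compensator cancels the drift, so $M$ is a local martingale with continuous part
\[
M_{t}\,\nabla\ln f^T(t,X_t)\cdot\diff B_t .
\]
Girsanov's theorem (Lenglart's version for possibly vanishing densities) then gives that
\[
\widetilde B_t=B_t-\int_0^t\nabla\ln f^T(s,X_s)\diff s
\]
is a $\widetilde\PP^T_\nu$-Brownian motion for $(\bar{\mathcal F}_t)$. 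This is well defined because $f^T>0$, as follows from the Feynman--Kac representation with $\rho$ locally bounded and continuous paths. Substituting into \eqref{eq:dynamics_x} gives \eqref{eq:tildedynamics_x_intro}.

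\emph{Step 4: the entropy identity.} By time homogeneity, $f^T(t,x)=f^{T-t}(0,x)=\PP^{T-t}_x(N_{T-t}=0)$. The density of $\widetilde\PP^{T-t}_x$ with respect to $\PP^{T-t}_x$ is $\mathbf 1_{\{N_{T-t}=0\}}/\PP^{T-t}_x(N_{T-t}=0)$, so
\[
\HH(\widetilde\PP^{T-t}_x\mid\PP^{T-t}_x)=-\ln\PP^{T-t}_x(N_{T-t}=0)=-\ln f^T(t,x).
\]
This is \eqref{eq:driftentrop}, and differentiability in $x$ follows since $f^T$ is a positive classical solution.

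\emph{Main obstacle.} The hardest step is the rigorous treatment in Steps 2–3, for two reasons. First, one must establish the $\bar{\mathcal F}$-intensity of $N$ and the conditional formula for $M_t$ under the chosen enlargement. Second, $M$ vanishes after $\tau$, so standard Girsanov does not apply directly. I would handle this by working on $\{t<\tau\}$ and noting that $\widetilde\PP^T_\nu(\tau>T)=1$. The local martingale property of $M$ would be localized using the boundedness of $f^T$ and the polynomial growth of its derivatives, together with stopping times at which $|X|$ exits large balls.
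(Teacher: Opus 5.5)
Your proposal is correct and follows essentially the paper's route: the density martingale $Z_t=\1_{\{N_t=0\}}f^T(t,X_t)/\langle\nu,f^T(0,\cdot)\rangle$ obtained from Feynman--Kac and the Markov property (with $t=0$ giving part (i)), and Girsanov's theorem for merely absolutely continuous changes of measure (Lenglart's version, as the paper uses), completed by L\'evy's characterization, giving part (ii). Part (iii) comes from the direct computation $\HH(\widetilde\PP_x^{T-t}|\PP_x^{T-t})=-\ln\PP_x^{T-t}(N_{T-t}=0)=-\ln f^T(t,x)$. The only cosmetic difference is that you write the stochastic exponential in SDE form, $\diff Z=Z_{-}\diff(L-\widetilde N)$, whereas the paper writes the explicit Dol\'eans-Dade product via It\^o on $\ln f^T$; your compensator $\rho(X_t)\1_{\{N_{t-}=0\}}$ is in fact the accurate one, while the paper's $\widetilde N_t=N_t-\int_0^t\rho(X_s)\diff s$ omits the indicator, which is harmless only because the density vanishes after $\tau$.
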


The statement of Theorem \ref{thm:main_ii_0} is possibly known, at least partially or at an heuristic level, but we have not been able to find a complete rigorous proof in the literature. Our proof, given in Section \ref{subsec:proofs1.2y1.3}, is based on an application of a general form of Girsanov's theorem to a specific realization of the right continuous augmented  Markov process \eqref{eq:absorbedXN} detailed in Section \ref{subsec:prelim}. Notice that, in general, both the dynamics and the initial condition of the original process are affected by the conditioning. 

The function  $\nabla \ln f^T(t,x)$ in the stochastic differential equation \eqref{eq:tildedynamics_x_intro} corresponds to the {\it F\"ollmer drift} in Theorem \ref{thm:follmer} in the setting of entropy minimization under distributional constraints discussed in Section \ref{subsubsec:the_gaussian_follmer_process}. The function $\ln f^T(t,x)$ is well-known to solve a Hamilton-Jacobi-Bellman equation, as follows from \eqref{eq:pde_fT}. The picture connecting this function and the pathwise entropy is completed with the next result, also proved in Section \ref{subsec:proofs1.2y1.3}.

\begin{corollary}\label{cor:valueentropy} In the same setting of Theorem \ref{thm:main_ii_0}, we have 
\begin{align*}
\HH(\widetilde{\PP}_{\nu}^T|\PP_{\nu}^T) & = \widetilde{\EE}_{\nu}^T \left[\int_0^T \frac12\abs{\nabla \ln f^T(s,X_s)}^2\diff s + \int_0^T\rho(X_s)\diff s\right] +  \HH(\nu^T| \nu)
\\  & = -  \ln \EE_{\nu}^T[f^T(0,X_0)].
\end{align*}
\end{corollary}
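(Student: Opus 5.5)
The second equality is the more direct one, so I will prove it first. The density of $\widetilde{\PP}^T_\nu$ with respect to $\PP^T_\nu$ is $\mathbf{1}_{\{N_T=0\}}/\PP^T_\nu(N_T=0)$, and it is constant on the support of $\widetilde{\PP}^T_\nu$. Hence
\[
\HH(\widetilde{\PP}^T_\nu|\PP^T_\nu) = -\ln \PP^T_\nu(N_T=0).
\]
By \eqref{eq:distribution_tau_exponential1} and the Feynman--Kac representation of the unique classical solution of \eqref{eq:pde_fT},
\[
\PP^T_\nu(N_T=0) = \EE_\nu\Big[\exp\Big(-\int_0^T\rho(X_s)\diff s\Big)\Big] = \EE_\nu[f^T(0,X_0)].
\]
This gives the second line of the statement. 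It is also consistent with \eqref{eq:driftentrop} at $t=0$, which reads $\ln f^T(0,x) = -\HH(\widetilde{\PP}^T_x|\PP^T_x)$.

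For the first equality, I will compute $\ln \frac{\diff \widetilde{\PP}^T_\nu}{\diff \PP^T_\nu}$ pathwise on $\{N_T=0\}$ via the dynamics of Theorem \ref{thm:main_ii_0}. I apply Itô's formula to $\ln f^T(t,X_t)$ under $\PP$. Since $f^T$ is bounded and positive with polynomially growing derivatives, and $\partial_t \ln f + \LL^X \ln f = \rho - \tfrac12|\nabla \ln f|^2$ follows from \eqref{eq:pde_fT}, this yields, using $f^T(T,\cdot)=1$,
\[
0 = \ln f^T(0,X_0) + \int_0^T \nabla \ln f^T(s,X_s)\cdot \diff B_s + \int_0^T \Big(\rho(X_s) - \tfrac12 |\nabla\ln f^T(s,X_s)|^2\Big)\diff s .
\]
I then rewrite the stochastic integral in terms of the $\widetilde{\PP}$-Brownian motion, using $\diff B_s = \diff\widetilde B_s + \nabla \ln f^T(s,X_s)\diff s$ from \eqref{eq:tildedynamics_x_intro}. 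This gives, $\widetilde{\PP}$-a.s.,
\[
-\ln f^T(0,X_0) = \int_0^T \nabla\ln f^T\cdot \diff\widetilde B_s + \int_0^T\Big(\tfrac12|\nabla \ln f^T|^2 + \rho\Big)(s,X_s)\diff s .
\]
Next I take $\widetilde{\EE}^T_\nu$ and use that $X_0\sim\nu^T$ under $\widetilde{\PP}^T_\nu$ by Theorem \ref{thm:main_ii_0}\ref{item:main_ii_0_i}. This gives
\[
\widetilde{\EE}^T_\nu[-\ln f^T(0,X_0)] = -\int \ln f^T(0,\cdot)\diff\nu^T .
\]
By definition of $\nu^T$, $\HH(\nu^T|\nu) = \int \ln f^T(0,\cdot)\diff\nu^T - \ln \EE_\nu[f^T(0,X_0)]$. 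Adding $\HH(\nu^T|\nu)$ to both sides then yields exactly
\[
\widetilde{\EE}^T_\nu\Big[\int_0^T \big(\tfrac12|\nabla\ln f^T|^2+\rho\big)\diff s\Big] + \HH(\nu^T|\nu) = -\ln\EE_\nu[f^T(0,X_0)],
\]
provided the $\diff\widetilde B$ integral has zero mean.

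The main obstacle is justifying that the $\widetilde{\PP}$-local martingale $\int\nabla\ln f^T\cdot\diff\widetilde B$ has zero expectation, i.e.\ is a true martingale, since $\nabla\ln f^T$ need not be bounded. I would localize with stopping times $\sigma_n$, the exit times of balls, and take expectations of the stopped identity. Then I would pass to the limit as follows:
\begin{itemize}
\item the left side $\ln f^T(\sigma_n,X_{\sigma_n})$ is bounded above because $f^T\le1$ (since $\rho\ge0$);
\item the $\diff s$ integrand is nonnegative, so monotone convergence applies to it;
\item $\ln f^T(t,x)\ge -\HH(\cdots)$ from \eqref{eq:driftentrop}, together with Fatou's lemma, controls the terminal term.
\end{itemize}
If this is insufficient, I would instead invoke the standard fact that for a Girsanov density $\frac{\diff\widetilde{\PP}}{\diff\PP}$ with finite entropy, the entropy equals $\HH$ of the initial laws plus $\tfrac12\widetilde{\EE}\int|\text{drift}|^2$ plus the killing contribution. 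This fact is a consequence of the Girsanov construction used in the proof of Theorem \ref{thm:main_ii_0}. Finiteness follows from the already-established value $-\ln\PP^T_\nu(N_T=0)<\infty$.
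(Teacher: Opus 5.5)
Your route is the paper's. The second equality holds because $Z_T=\1_{\{N_T=0\}}/\PP^T_\nu(N_T=0)$ is $\widetilde{\PP}^T_\nu$-a.s.\ constant. The first comes from three ingredients: the It\^o identity for $\ln f^T(t,X_t)$ (Lemma \ref{lemma:computation_fT}); the Girsanov rewriting $L=\int_0^\cdot\nabla\ln f^T\cdot\diff\widetilde B+[L]$ (the paper's ``$L-[L]$ is a $\widetilde{\PP}^T_\nu$-local martingale''); and $\widetilde{\EE}^T_\nu[\ln(f^T(0,X_0)/\EE_\nu f^T(0,X_0))]=\HH(\nu^T|\nu)$. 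The paper handles the zero-mean issue with ``a simple localization argument'', but your version of that argument does not work as written. Your first two bullets give only one inequality:
\[
\widetilde{\EE}^T_\nu\Big[\int_0^T\big(\tfrac12\abs{\nabla\ln f^T}^2+\rho\big)(s,X_s)\diff s\Big]\leq-\int\ln f^T(0,\cdot)\diff\nu^T .
\]
For equality you need $\widetilde{\EE}^T_\nu[-\ln f^T(\sigma_n\wedge T,X_{\sigma_n\wedge T})]\to0$, i.e.\ uniform integrability of these nonnegative variables. Fatou's lemma gives only $\liminf\geq 0$, which is the direction you already have. The identity \eqref{eq:driftentrop} merely rewrites $-\ln f^T(t,x)$ as a nonnegative entropy, so it supplies no integrable dominating function. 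Your fallback does not help either. That ``standard fact'' is the first equality in general form (e.g.\ L\'eonard's finite-entropy Girsanov theory), and it does not follow from the construction in Theorem \ref{thm:main_ii_0} without exactly this integrability step.

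The repair is short and uses what you already have. Since $-u\ln u\leq 1/e$ on $(0,1]$, you get $-\int\ln f^T(0,\cdot)\diff\nu^T\leq 1/(e\,\EE_\nu[f^T(0,X_0)])<\infty$. The displayed inequality then gives $\widetilde{\EE}^T_\nu\int_0^T\abs{\nabla\ln f^T(s,X_s)}^2\diff s<\infty$. Hence $\int_0^\cdot\nabla\ln f^T\cdot\diff\widetilde B$ is a square-integrable $\widetilde{\PP}^T_\nu$-martingale with zero mean, and taking expectations in the unstopped identity gives equality directly. Alternatively, use optional stopping for the bounded $\PP^T_\nu$-martingale $Z_t=\1_{\{N_t=0\}}f^T(t,X_t)/\PP^T_\nu(N_T=0)$ (Lemma \ref{lemma:computation_Z}). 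This turns $\widetilde{\EE}^T_\nu[-\ln f^T(\sigma_n\wedge T,X_{\sigma_n\wedge T})]$ into the $\EE^T_\nu$-expectation of $\1_{\{N_{\sigma_n\wedge T}=0\}}\,f^T\,(-\ln f^T)(\sigma_n\wedge T,X_{\sigma_n\wedge T})$, divided by $\PP^T_\nu(N_T=0)$. That integrand is bounded by $1/e$ and vanishes once $\sigma_n>T$, because $f^T(T,\cdot)=1$. Dominated convergence then gives the missing limit.
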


Corollary \ref{cor:valueentropy} thus serves as an analog of the entropy-representation formula on the Wiener space from Theorem \ref{thm:follmer} \ref{item:iii:thm:follmer} in the present setting.

\section{Conditional contractivity under asymptotic strict convexity}
\label{sec:cond_contractivity}

Recall that the  $1$-Wasserstein distance $\W_1$ between two probability measures $\eta, \eta' \in \mathcal{P}_1(\R^d)$ is defined as 
\[\W_{1}(\eta, \eta') \coloneq \inf_{\pi \in \Pi(\eta, \eta')} \int_{\R^d \times \R^d} \abs{x-y} \diff \pi(x,y)\] 
where $\Pi(\eta, \eta') \coloneq \{\pi \in \mathcal{P}(\R^d \times \R^d) : \proj_1(\pi) = \eta \text{ and } \proj_2(\pi) = \eta'\}$. 

Leaving the question of existence of a QSD $\alpha$ for later discussion, we next study conditions on the drift term $b$ of the process \eqref{eq:dynamics_x} and the killing rate $\rho$, for the convergence \eqref{eq:QSDconvergence} to take place at an exponential rate in the $1$-Wasserstein distance $\W_1$,  for some general class of initial conditions $\nu$. To that end, we will leverage the pathwise approach to contractivity of diffusion semigroups introduced by Eberle \cite{MR2843007, eberle16reflection} on the basis of reflection couplings, as well as on the theory of propagation of asymptotic log-concavity along  Hamilton-Jacobi flows, developed in the recent work of Chaintron, Eichinger, and Conforti \cite{chaintron25propagation}, which relies on related pathwise ideas. 

Central to our approach will be the stochastic flow defined through the SDE \eqref{eq:tildedynamics_x_intro}. The following property of this dynamics is easily checked; its proof is therefore omitted.

\begin{lemma}\label{lemma:SDEcond_well}
Under Assumptions \ref{ass:a2} and \ref{ass:uniqueFK}, uniformly on $T>0$ and $t\in  [0,T)$, the function
$$x\mapsto  \nabla \ln f^T(t,x) $$ is locally Lipschitz on $ \mathbb{R}^d$. Consequently, for every $T>0$ and $x\in \R^d$, if $W$ is a standard $d$-dimensional Brownian motion in some given probability space, then there is a pathwise unique strong solution $(X^{x,T}_t)_{t\in [0,T]}$ to the SDE
 \begin{equation}
\label{eq:SDEcond}
 \diff X^{x,T}_t = \diff W_t + b(X^{x,T}_t)\diff t + \nabla \ln f^T(t,X^{x,T}_t) \diff t,  \quad X_0^{x,T} =x.
\end{equation}
\end{lemma}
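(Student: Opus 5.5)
The plan is to derive local Lipschitz continuity of $x\mapsto\nabla\ln f^T(t,x)$ from the regularity in Assumption \ref{ass:uniqueFK}, together with a positive lower bound on $f^T$ over compact sets. The SDE statement then follows by the standard localization argument for locally Lipschitz coefficients, combined with a non-explosion argument based on the change of measure of Theorem \ref{thm:main_ii_0}.

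\textbf{Step 1: positivity of $f^T$.} By the Feynman--Kac representation (uniqueness of classical solutions under Assumption \ref{ass:a2}), we have
\[
f^T(t,x)=\EE_x\Big[\exp\Big(-\int_0^{T-t}\rho(X_s)\diff s\Big)\Big].
\]
Hence $0<f^T\leq 1$. On a compact set $K\subset\R^d$, continuity of $\rho$ and non-explosion give
\[
f^T(t,x)\geq \exp\big(-(T-t)\sup_{B_R}\rho\big)\,\PP_x\big(\sup_{s\leq T}|X_s|<R\big).
\]
Choosing $R$ large makes the probability at least $1/2$ uniformly for $x\in K$. So $f^T$ is bounded below on $[0,T]\times K$.

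\textbf{Step 2: Lipschitz bound on the drift.} The solution $f^T$ is classical with polynomially growing derivatives. Reading "classical" as $C^{1,2}$, and assuming the second spatial derivatives are locally bounded (and locally Lipschitz in $x$ if only $C^2$ is available), we get
\[
\nabla^2\ln f^T=\frac{\nabla^2 f^T}{f^T}-\frac{\nabla f^T\otimes\nabla f^T}{(f^T)^2}.
\]
This is bounded on $[0,T)\times K$ by Step 1. Hence $\nabla\ln f^T(t,\cdot)$ is Lipschitz on $K$, uniformly in $t\in[0,T)$.

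The claimed uniformity in $T$ is the delicate part. Time-homogeneity gives $f^T(t,x)=f^{T-t}(0,x)$, so one needs bounds on $\nabla^2\ln f^{S}(0,\cdot)$ over $K$ that are uniform in $S>0$. I would first try interior parabolic (Schauder) estimates for $u=f^S/f^S(0,x_0)$. Combined with a local Harnack inequality on $K$, these control $\nabla u/u$ and $\nabla^2u/u$ locally, with constants independent of $S$, since the equation is autonomous and $b,\rho$ are locally regular. This is the main obstacle. If it fails, I would settle for uniformity in $t\in[0,T)$ for each fixed $T$, which is all the SDE statement requires.

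\textbf{Step 3: the SDE.} With locally Lipschitz coefficients $b+\nabla\ln f^T(t,\cdot)$, standard localization yields a pathwise unique strong solution up to the explosion time. It remains to show non-explosion. The law of the unconditioned diffusion started at $x$, reweighted by the normalized indicator of survival, has by Theorem \ref{thm:main_ii_0}\ref{item:main_ii_0_ii} (with $\nu=\delta_x$) a non-explosive canonical path solving \eqref{eq:tildedynamics_x_intro}. This gives a global weak solution. By Yamada--Watanabe, weak existence combined with pathwise uniqueness up to explosion implies that the strong solution $X^{x,T}$ does not explode on $[0,T]$.

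Alternatively, one can use the Girsanov density $f^T(t,X_t)e^{-\int_0^t\rho(X_s)\diff s}/f^T(0,x)$ directly up to the localizing times. This shows $\PP(\text{explosion before } T)=0$ by absolute continuity with respect to the non-explosive law of $X$.
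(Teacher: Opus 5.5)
Your fixed-$T$ argument and your Step 3 are sound, and they are presumably the ``easy check'' the paper omits. $f^T$ is continuous and strictly positive on $[0,T]\times\R^d$, so compactness bounds it below on $[0,T]\times K$. That is simpler than your exit-probability bound, whose uniformity in $x\in K$ you did not justify. The polynomial bounds on $\nabla f^T,\nabla^2 f^T$, read as uniform in $t\in[0,T)$, then control $\nabla^2\ln f^T$ on $[0,T)\times K$; no Lipschitz property of $\nabla^2 f^T$ is needed. For non-explosion, pathwise uniqueness of the globally Lipschitz truncations gives equality in law of the processes stopped at the exit time $\tau_R$ of $B_R$. Hence $\PP(\tau_R\le T)=\widetilde{\PP}^T_x(\tau_R\le T)\to0$, because $\widetilde{\PP}^T_x\ll\PP^T_x$. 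There is no circularity, since Theorem \ref{thm:main_ii_0} is proved without this lemma.

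The gap is the first claim as stated, uniformity over $T>0$, which you leave open. Your Step 2 constant rests on a lower bound of order $e^{-T\sup_{B_R}\rho}$, which degenerates as $T\to\infty$. The repair you sketch does not work as written, for two reasons.

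\textbf{No Schauder estimates.} Under Assumption \ref{ass:uniqueFK}, $\rho$ is only continuous, so interior Schauder estimates are unavailable. $W^{2,1}_p$-type estimates control $\nabla f$ in a H\"older norm, but not $\nabla^2 f$ pointwise.

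\textbf{Harnack goes the wrong way in time.} The parabolic Harnack inequality bounds a supremum at earlier times by an infimum at later times. An interior estimate at $(S,x)$ instead involves $\sup f$ over a backward cylinder $[S-1,S]\times B_{2R}$ containing the point itself. So ``autonomous equation plus local Harnack'' does not close by itself.

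The missing observation is that $s\mapsto f(s,y)=\EE_y[e^{-\int_0^s\rho(X_r)\diff r}]$ is nonincreasing, since $\rho\geq 0$. Hence $\sup_{[S-1,S]\times B_{2R}}f=\sup_{B_{2R}}f(S-1,\cdot)\leq C_H\inf_{B_{2R}}f(S,\cdot)$, with $C_H$ independent of $S\geq 2$ by autonomy. Combined with Schauder estimates, this gives $\abs{\nabla f}+\abs{\nabla^2 f}\leq Cf$ on $K$ for all $S\geq 2$, and the fixed-$T$ bound with $T=2$ covers $S\leq 2$. This only works when $\rho$ is locally H\"older, e.g.\ under Assumption \ref{ass:langevin}.

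As it stands, you have proved the fixed-$T$ statement, but not the $T$-uniform one. The fixed-$T$ version is all the well-posedness conclusion needs; the $T$-uniform input to Theorem \ref{thm:contraction_dynamics_intro} comes separately, from Proposition \ref{prop:conforti}.
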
 
  
We will henceforth work under an additional structural assumption, which places us in the reversible case with a smooth potential and a sufficiently regular killing rate.

\begin{restatable}{assumption}{asslangevin}
\label{ass:langevin}
$~$
\begin{enumerate}
    \item The drift $b\colon \R^d \to \R^d$ has the form $b = -\nabla U$ for some smooth potential $U \colon \R^d \to \R$ with derivatives having polynomial growth and such that $\int_{\R^d}e^{-2U}\diff x < +\infty$ and $\nabla U$ is Lipschitz. In particular, the process $X$ is symmetric with respect to the Gibbs measure $\diff \mu \coloneq e^{-2U} \diff x$.
    
    \item The killing rate $\rho \colon \R^d \to \R_+$ is smooth, nonnegative, and has derivatives with polynomial growth.
\end{enumerate}
\end{restatable}

\begin{remark}
In particular, Assumption \ref{ass:langevin} implies the former Assumptions \ref{ass:a2} and \ref{ass:uniqueFK}.
\end{remark}

The key notion on which the works  \cite{eberle16reflection}  and \cite{chaintron25propagation} are based is the {\it weak convexity profile} of  a $C^1$ function  $V \colon \R^d \to \R$, denoted by $\kappa_V\colon \R_{>0} \to \R$,  which is defined as
\[\forall r > 0, \quad \kappa_V(r) \coloneq \inf\left\{\frac{\langle \nabla V(x) - \nabla V(y), x-y \rangle}{\abs{x-y}^2} : \abs{x-y} = r\right\}.\]
This object recaptures the standard notions of convexity and strong convexity of the potential $V$, as $\inf_{r>0}  \kappa_V(r)  \geq 0$ and $\inf_{r>0}  \kappa_V(r)  \geq a$ for some $a>0$, respectively. Its interest, however, is that it provides a way of measuring convex or strongly convex behavior occurring possibly only at infinity.

More precisely, a $C^1$ function $V \colon \R^d \to \R$ is said to be {\it strictly asymptotically convex } if $\liminf_{r \to +\infty} \kappa_V(r) > 0$. The class of strictly asymptotically convex functions is of course much larger than strongly convex functions, and includes all functions $V$ that are regular inside a compact set of $\R^d$ and strongly convex on its complement. See Section \ref{sec:weak_convexity_Eberle} for further discussion on the weak convexity profile and strict asymptotic convexity.  

We now introduce a set of conditions required for our next main result on the dynamics of the conditioned process to hold, and then state this result.

\begin{restatable}{assumption}{asseffectivepotential}
\label{ass:effective_potential}
Define the effective potential as
\[\forall x \in \R^d, \quad U_{\mathrm{eff}}(x)\coloneq \frac12 \abs{\nabla U(x)}^2 - \frac12 \Delta U(x)\]
and for a $C^2$ function $V\colon \R^d \to \R$, let $\kappa_V \colon \R_{>0} \to \R$ be its weak convexity profile.
\begin{enumerate}
    \item The function $r \mapsto r \kappa_{U_{\mathrm{eff}} + \rho}(r)$ is uniformly lower-bounded:
    $$\inf_{r > 0} r \kappa_{U_{\mathrm{eff}} + \rho}(r) > -\infty.$$
    \item The potential $U_{\mathrm{eff}} + \rho$ is strictly asymptotically convex:
    $$\liminf_{r \to +\infty} \kappa_{U_{\mathrm{eff}} + \rho}(r) > 0.$$
    \item The potential $U$ is strictly asymptotically convex:
    $$\liminf_{r \to +\infty} \kappa_{U}(r) > 0$$
    and such that $\int_0^1 r(\kappa_U(r))^-\diff r < +\infty$.
\end{enumerate}
\end{restatable}

\begin{theorem}
\label{thm:contraction_dynamics_intro}
Let Assumptions \ref{ass:langevin} and \ref{ass:effective_potential} hold. Then there exist constants $c^*,C^*>0$ such that for all $T>0$ and all $t\in [0,T]$, the process \eqref{eq:SDEcond} satisfies
    \begin{equation}
    \label{eq:contraction_dynamics_intro_1}
    \W_1(\Law(X_t^{x,T}),\Law(X_t^{y,T}))\leq C^* e^{-c^*t} |x-y|.
    \end{equation}
    Consequently,  denoting by $X_t^{X_0,T}$ process \eqref{eq:SDEcond}  when started at a random position $X_0$ independent of $W$, and writing  $\Law_{\mu}(X_t^{X_0,T})$  and $\Law_{\nu}(X_t^{X_0,T})$ for the laws of $X_t^{X_0,T}$ when $X_0$ respectively has  the laws $\mu$ and $\nu$, we have, for all $T>0$ and all $t\in [0,T]$,
    \begin{equation}
    \label{eq:contraction_dynamics_intro_2}
    \W_1(\Law_{\mu}(X_t^{X_0,T}),\Law_{\nu}(X_t^{X_0,T}))\leq C^* e^{-c^*t} \W_1(\mu,\nu).
    \end{equation}
\end{theorem}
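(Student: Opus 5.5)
Our plan is to reduce the problem, via a ground-state (Doob) transform, to a Hamilton--Jacobi--Bellman flow to which Proposition~\ref{prop:conforti} (from \cite{chaintron25propagation}) applies. We then conclude with Eberle's reflection coupling (Theorem~\ref{thm:eberle}).

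\emph{Step 1: rewrite the total drift as a gradient.} Using reversibility, set $g^T(t,x)\coloneq e^{-U(x)} f^T(t,x)$. A direct computation from \eqref{eq:pde_fT} with $b=-\nabla U$ shows that $g^T$ solves the Schrödinger-type backward equation
\[
\partial_t g + \tfrac12\Delta g = (U_{\mathrm{eff}}+\rho)\, g, \qquad g(T,\cdot)=e^{-U}.
\]
The conditioned drift in \eqref{eq:SDEcond} then becomes
\[
-\nabla U+\nabla\ln f^T=\nabla \ln g^T(t,\cdot).
\]
Hence $-\ln g^T$ is the value function of a stochastic control problem. The controlled dynamics are a Brownian motion with running cost $\frac12|u|^2+U_{\mathrm{eff}}+\rho$ and terminal cost $U$; equivalently, $-\ln g^T$ is a relative entropy in the spirit of Theorem~\ref{thm:main_ii_0}\ref{item:main_ii_0_iii}. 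Assumption~\ref{ass:effective_potential} provides exactly the hypotheses of Proposition~\ref{prop:conforti} on the running and terminal costs: items (i)--(ii) cover the running potential and item (iii) covers the terminal potential. Applying it, we aim to obtain a bound on $\kappa_{-\ln g^T(t,\cdot)}$, uniform in $T$ and $t\in[0,T]$, of the form
\[
\kappa_{-\ln g^T(t,\cdot)}(r)\ \geq\ \alpha - \frac{1}{r}f_L(r)
\]
for some $\alpha>0$, $L>0$ and a bounded profile $f_L$ (weak semiconvexity with strictly positive convexity at infinity). Here we must check carefully that the constants do not depend on $T-t$. This is where strict asymptotic convexity of the running cost $U_{\mathrm{eff}}+\rho$ is essential: it makes the long-horizon propagation stabilize rather than degrade.

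\emph{Step 2: contraction by reflection coupling.} The drift of \eqref{eq:SDEcond} is $-\nabla V_t$ with $V_t=-\ln g^T(t,\cdot)$, and Step~1 gives a time-uniform lower bound on its weak convexity profile. We couple $X^{x,T}$ and $X^{y,T}$ by reflection until they meet, using Lemma~\ref{lemma:SDEcond_well} for well-posedness. Following Eberle, we build a concave increasing function $\mathfrak f$ comparable to the identity, $c_1 r\leq \mathfrak f(r)\leq r$, adapted to the common lower profile. Itô's formula then yields
\[
\frac{\diff}{\diff t}\,\EE\bigl[\mathfrak f(|X_t^{x,T}-X_t^{y,T}|)\bigr]\leq -c^*\, \EE\bigl[\mathfrak f(|X_t^{x,T}-X_t^{y,T}|)\bigr].
\]
Time-inhomogeneity is harmless because the profile bound is uniform in $t$. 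Grönwall's lemma and the comparison $\mathfrak f\asymp r$ give \eqref{eq:contraction_dynamics_intro_1} with $C^*=1/c_1$.

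\emph{Step 3: general initial laws.} For \eqref{eq:contraction_dynamics_intro_2}, take an optimal coupling $(X_0,Y_0)$ of $\mu$ and $\nu$, independent of $W$. We glue measurably the couplings from Step~2 conditionally on the initial positions, or simply use the convexity of $\W_1$ under mixtures. Integrating \eqref{eq:contraction_dynamics_intro_1} against the optimal coupling then gives the claim.

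The main obstacle is Step~1: verifying that the hypotheses of \cite{chaintron25propagation} match Assumption~\ref{ass:effective_potential} after the ground-state transform, and that the resulting weak-convexity bound holds uniformly in the horizon $T$. Some care is also needed in justifying the HJB regularity (classical solution, polynomial growth) from Assumption~\ref{ass:langevin}.
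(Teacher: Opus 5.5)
Your proposal is correct and follows essentially the paper's route. The paper sets $V_t = U - \ln f^T(t\wedge T,\cdot)$, which is exactly your $-\ln g^T$, invokes Proposition \ref{prop:conforti}(i) (from \cite{chaintron25propagation}) for a $T$-uniform lower profile $\bar\kappa$, and then applies Eberle's reflection-coupling contraction (Theorem \ref{thm:eberle}) with the comparison $C_{\bar\kappa}\W_1\le \W_{f_{\bar\kappa}}\le \W_1$. Your ground-state transform $g^T=e^{-U}f^T$ makes explicit why the conditioned drift fits that HJB framework, and your Step 3 is already covered by Theorem \ref{thm:eberle}, which handles general initial laws directly.
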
 

Our aim now is to apply Theorem \ref{thm:contraction_dynamics_intro} in the framework provided by Theorem \ref{thm:main_ii_0}, that is,  to the particular case of initial data given by $T$-tilted distributions defined as in \eqref{eq:mu0T}. Let us first generalize that notion as follows.

\begin{definition}
For $T > 0$ and $t \in [0,T]$, we denote by $\nu_t^T$ the law at time $t$ of the  process $X$  in \eqref{eq:dynamics_x} started with initial law $\nu$, conditioned on $\{\tau>T\}$. We call this distribution the $T${\it -tilted law of} $X$ {\it at time} $t$.
\end{definition}

That is, $\nu_t^T$  is the law at time $t\in [0,T]$ of the weak solution to the SDE \eqref{eq:tildedynamics_x_intro} provided by Theorem \ref{thm:main_ii_0}, whose initial condition  is  $\nu^T_0= \nu^T$, the $T$-tilted law defined in \eqref{eq:mu0T}. 

Notice that  $\alpha\in {\cal P}(\R^d)$ is a QSD as defined in \eqref{eq:QSDdef} if and only if $\alpha_T^T=\alpha$ for all $T\geq 0$,  and the convergence  \eqref{eq:QSDconvergence}  in a given sense is tantamount to $\nu_T^T\xrightarrow[T\to +\infty]{} \alpha$. We can now deduce directly from the previous results the following corollary.

\begin{corollary}\label{coro:contractionconseq}
Let Assumptions \ref{ass:langevin} and \ref{ass:effective_potential} hold and let $\mu,\nu \in {\cal P}_1(\R^d)$. Then, for all $T>0$ and all $t\in [0,T]$,
\begin{equation}\label{eq:contractconditionedtilted}
    \W_1(\mu_t^T,\nu_t^T)\leq C^* e^{-c^*t} \W_1(\mu^T,\nu^T).
\end{equation}
In particular, if $\mu=\alpha$ is a QSD, then $\W_1(\alpha,\nu_T^T)\leq C^* e^{-c^*T} \W_1(\alpha^T,\nu^T)$ for all $T>0$. 
\end{corollary}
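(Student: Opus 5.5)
The plan is to read the corollary as the composition of two earlier results: Theorem \ref{thm:main_ii_0}, which identifies the conditioned law with a diffusion, and estimate \eqref{eq:contraction_dynamics_intro_2} of Theorem \ref{thm:contraction_dynamics_intro}.

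\textbf{Step 1: identify $\nu_t^T$ with the flow \eqref{eq:SDEcond}.} By Theorem \ref{thm:main_ii_0}, under $\widetilde{\PP}^T_\nu$ the canonical process starts from $X_0\sim\nu^T$ and solves the SDE \eqref{eq:tildedynamics_x_intro} driven by a $\widetilde{\PP}^T_\nu$-Brownian motion $\widetilde B$. Since the drift $b+\nabla\ln f^T(t,\cdot)$ does not depend on $\nu$, this is the same equation as \eqref{eq:SDEcond}. Lemma \ref{lemma:SDEcond_well} gives that the drift is locally Lipschitz, so pathwise uniqueness holds for \eqref{eq:SDEcond}. By Yamada--Watanabe, uniqueness in law then follows. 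Hence $\nu_t^T=\Law_{\nu^T}(X_t^{X_0,T})$ for every $t\in[0,T]$, where $X_0\sim\nu^T$ is independent of $W$. The same identification gives $\mu_t^T=\Law_{\mu^T}(X_t^{X_0,T})$.

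One point needs care here: the strong solution of \eqref{eq:SDEcond} must exist up to time $T$ for every starting point $x$, with no explosion. Lemma \ref{lemma:SDEcond_well} asserts this. Alternatively, for $\nu^T$-a.e. starting point it follows from the existence of the weak solution supplied by Theorem \ref{thm:main_ii_0}.

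\textbf{Step 2: finite first moments of the tilted laws.} To apply \eqref{eq:contraction_dynamics_intro_2} we need $\mu^T,\nu^T\in{\cal P}_1(\R^d)$. By Assumption \ref{ass:uniqueFK}, $f^T(0,\cdot)$ is bounded. Since $\rho\geq 0$, the Feynman--Kac representation gives $0<f^T(0,x)=\EE_x[e^{-\int_0^T\rho(X_s)\diff s}]\leq 1$, and its strict positivity makes the normalising constant in \eqref{eq:mu0T} positive and finite. The density $\diff\nu^T/\diff\nu$ is therefore bounded, so $\nu^T$ inherits a finite first moment from $\nu$, and likewise $\mu^T$ from $\mu$.

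\textbf{Step 3: apply the contraction.} With these two steps in place, \eqref{eq:contraction_dynamics_intro_2} applied to the initial laws $\mu^T$ and $\nu^T$ gives \eqref{eq:contractconditionedtilted}. If $\mu^T$ or $\nu^T$ failed to be in ${\cal P}_1$, the right-hand side would be $+\infty$ and the bound would be trivial; Step 2 shows this does not occur.

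\textbf{Step 4: the QSD case.} Take $\mu=\alpha$ a QSD and $t=T$. As noted before the corollary, $\alpha_T^T=\alpha$, so the bound reads $\W_1(\alpha,\nu_T^T)\leq C^*e^{-c^*T}\W_1(\alpha^T,\nu^T)$. This uses $\alpha\in{\cal P}_1(\R^d)$, which is part of the standing hypothesis $\mu,\nu\in{\cal P}_1(\R^d)$.

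The only genuinely delicate step is Step 1: the law of $X_t$ under the conditioned measure $\widetilde{\PP}^T_\nu$ must coincide with the law of the strong solution of \eqref{eq:SDEcond} started from $\nu^T$. This rests on uniqueness in law for \eqref{eq:SDEcond}, which in turn rests on the local Lipschitz property and non-explosion from Lemma \ref{lemma:SDEcond_well}. Everything else is bookkeeping.
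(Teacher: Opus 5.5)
Your proposal is correct and follows the same route the paper intends. The paper states the corollary as a direct consequence: Theorem \ref{thm:main_ii_0} identifies $\nu_t^T$ with the law of the flow \eqref{eq:SDEcond} started from $\nu^T$, estimate \eqref{eq:contraction_dynamics_intro_2} is applied to the tilted initial laws, and $\alpha_T^T=\alpha$ gives the QSD case. Your extra checks are precisely the details the paper leaves implicit: uniqueness in law via pathwise uniqueness and non-explosion, and $\mu^T,\nu^T\in\mathcal{P}_1(\R^d)$ because $0<f^T(0,\cdot)\leq 1$.
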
 

Consequently, if the families $(\alpha^T)_{T>0}$ and $(\nu^T)_{T>0}$ have uniformly bounded first moments, then for some constant $K(\nu)>0$, we obtain an exponential convergence bound of the form
$$\W_1(\alpha,\nu_T^T)\leq K(\nu) e^{-c^*T} \quad \mbox{for all } T>0.$$ 
 Sufficient conditions implying this type of bounds will be provided by Theorem \ref{theo:domain_of_attraction_W1} in the next section.

The proof of Theorem \ref{thm:contraction_dynamics_intro} along with some prerequisites is given in Section \ref{sec:weak_convexity_Eberle}.

\section{Exponential QSD convergence in Wasserstein distance } 
\label{sec:qsd}

We begin this section revisiting the Feynman-Kac semigroup associated with $X$ and $\rho$ and formulate general conditions that ensure the existence of a unique QSD $\alpha$ and the convergence \eqref{eq:QSDconvergence} for a large class of initial distributions $\nu$, first in a suitable weak sense. In Theorem \ref{theo:domain_of_attraction_W1} we will then refine those conditions and apply results  from the previous section to obtain exponential convergence to  $\alpha$  in  $1$-Wasserstein distance.

\begin{definition}
\label{def:feynman_kac_semigroup}
We denote by $(\PPP_t^\rho)_{t \geq0}$ the \textit{Feynman-Kac semigroup} associated with the Markov process $X$ and the killing rate $\rho \colon \R^d \to \R_+$, which acts on sufficiently integrable functions $g \colon \R^d \to \R$ by 
\[\forall t \geq 0, \forall x \in \R^d, \quad \PPP^\rho_t g(x) \coloneq \EE_x\left[g(X_t) e^{-\int_0^t \rho(X_s)\diff s}\right].\]
\end{definition}

\noindent The infinitesimal generator $\LL^\rho$ associated with $(\PPP_t^\rho)_{t \geq0}$ acts on suitable functions $g \colon \R^d \to \R$ via
\[\LL^\rho g = \LL^X g - \rho g.\]
Notice that for any  $\nu \in {\cal P}(\R^d)$,  
$$ \langle \alpha_T^T, g \rangle = \frac{\langle \alpha, \PPP^\rho_T g \rangle}{\langle \alpha, \PPP_T^\rho \1 \rangle} $$
and that $\alpha \in {\cal P}(\R^d)$ is a QSD in the sense of \eqref{eq:QSDdef} if and only if  $\langle \alpha, g \rangle = \frac{\langle \alpha, \PPP^\rho_T g \rangle}{\langle \alpha, \PPP_T^\rho \1 \rangle} $
for all $g$ measurable (or continuous) and  bounded. We say in that case that $ \alpha$ is a QSD for  $(\PPP_t^\rho)_{t \geq0}$. 

We also recall the notion of {\it domain of attraction of a QSD}, in forms that are suited to our present setting.

\begin{definition}
\label{def:domain_of_attraction} Let $\alpha \in {\cal P}(\R^d)$ be a QSD for  $(\PPP_t^\rho)_{t \geq0}$.  
\begin{itemize}
\item  We say that $\nu \in {\cal P}(\R^d)$ is in the {\it weak domain of attraction} of $\alpha$ if 
$ \nu_T^T \xrightarrow[T \to +\infty]{\text{}} \alpha$  
in the weak topology of probability measures. 
\item Moreover,  if $\alpha \ll \mu$  with  $\frac{\diff \alpha}{\diff \mu} \in L^q(\mu)$ for seome $q\in [1,\infty]$, we say that $\nu \in {\cal P}(\R^d)$ is in the {\it weak-$L^q(\mu)$ domain of attraction} of $\alpha$ if  $\nu_T^T \ll \mu$  with  $\frac{\diff \nu^T_T}{\diff \mu} \in L^q(\mu)$ for all $T\geq 0$ and  
$ \frac{\diff \nu^T_T}{\diff \mu} \xrightarrow[T \to +\infty]{\text{}} \frac{\diff \alpha}{\diff \mu} $ 
weakly in $L^q(\mu)$. 

\item Last, if $\alpha \in {\cal P}_1(\R^d)$ we say that $\nu \in {\cal P}_1(\R^d)$ is in the {\it $1$-Wasserstein domain of attraction} of $\alpha$  $\nu_T^T \in {\cal P}_1(\R^d)$ for all $T\geq 0$  and 
if $\nu_T^T \xrightarrow[T \to +\infty]{\text{}} \alpha$ 
with respect to the $1$-Wasserstein topology.
\end{itemize}
\end{definition}

\begin{remark} Clearly,  both the weak-$L^q(\mu)$ domain of attraction and the $1$-Wasserstein domain of attraction of $\alpha$ are subsets of its weak domain of attraction. However, for  $\nu$ in the weak domain of attraction of $\alpha$ the condition  $\alpha,\nu \ll \mu$ with $\frac{\diff \alpha}{\diff \mu}, \frac{\diff \nu}{\diff \mu} \in L^q(\mu)$ is not enough for $\nu$ to be in the weak-$L^q(\mu)$ domain of attraction of $\alpha$, and  the condition $\alpha,\nu \in {\cal P}_1(\R^d)$ is not enough for $\nu$ to be  in the $1$-Wasserstein domain of attraction of $\alpha$ either.
\end{remark}

The following notion gathers key asymptotic properties of the Feynman-Kac semigroup.

\begin{definition}
\label{def:fk_ergodicity}
Let $p \in [2, +\infty]$. We say that the Feynman-Kac semigroup $(\PPP_t^{\rho})_{t\geq 0}$ is {\it weakly $p$-ergodic} if the following hold:
\begin{enumerate}
\item there exist $\phi_0 \in L^p(\mu)$ and $\lambda_0 > 0$ such that $\phi_0 > 0$ $\mu$-a.s., $\langle \mu, \phi_0^2 \rangle = 1$, and
\[\forall t \geq 0, \quad \PPP^\rho_t \phi_0 = e^{-\lambda_0 t} \phi_0, \quad \mu \mbox{-a.s.};\]
\item for every $t \geq 0$, $\PPP_t^{\rho}$ is a linear bounded operator $L^{p}(\mu)\to L^{p}(\mu)$; and 
\item for every $g \in L^{p}(\mu)$,
\[\lim_{t \to +\infty} e^{\lambda_0 t} \PPP_t^\rho g =  \phi_0 \langle \phi_0, g \rangle_{\mu}\mbox{ weakly-* in } L^p(\mu).\]
\end{enumerate} 
The function $\phi_0$ is called a {\it ground state} for the Feynman-Kac semigroup $(\PPP_t^{\rho})_{t\geq 0}$ and $\lambda_0$ is the {\it decay rate}.
\end{definition}

In the next result we establish the relationship between Definition \ref{def:fk_ergodicity} and the existence of a unique QSD $\alpha$. We also provide sufficient conditions for a distribution $\nu$ to belong to the weak-$L^q(\mu)$ domain of attraction of $\alpha$.

\begin{theorem}
\label{theo:domain_of_attraction}
Let $p\geq 2$ and let Assumption \ref{ass:langevin} hold. Then statements (a) and (b) below are equivalent:
\begin{enumerate}
    \item[(a)] For every $t \geq 0$, $\PPP_t^{\rho}$ is a linear bounded operator $L^{p}(\mu)\to L^{p}(\mu)$ and there exists $\lambda_0 > 0$ and $\phi_0 \in L^p(\mu)$ such that $\phi_0 > 0$ $\mu$-a.s., $\langle \mu, \phi_0^2 \rangle = 1$ and 
    $  \phi_0\langle \mu, \phi_0 \rangle   = \lim_{t \to +\infty} e^{\lambda_0 t} \PPP_t^\rho \mathbf{1} $
     weakly-$*$ in $L^p(\mu)$.  Moreover,  the probability measure \begin{equation}\label{eq:alphadef} \diff \alpha \coloneq \phi_0 / \langle \mu, \phi_0 \rangle \diff \mu 
    \end{equation}
    is a QSD for $(\PPP_t^\rho)_{t \geq0}$ and every $\nu \in \mathcal{P}(\R^d)$ such that $\nu \ll \mu$  with   $\frac{\diff \nu}{\diff \mu} \in L^q(\mu)$ for some $q \geq p/(p-1)$ is in the weak-$L^q(\mu)$ domain of attraction of $\alpha$.

    \item[(b)] The Feynman-Kac semigroup $(\PPP_t^{\rho})_{t\geq 0}$ is  weakly $p$-ergodic with associated ground state $\phi_0 \in L^p(\mu)$ and decay rate $\lambda_0$.     
\end{enumerate}
Last, if one of the two previous points (a) or (b) holds, one has $\lambda_0 = -\frac{\ln \langle \alpha, \PPP_T^\rho \1 \rangle }{T}$ for all $T>0$ and $\alpha$  is  the unique QSD for $(\PPP_t^\rho)_{t \geq0}$ 
such that $\alpha \ll \mu$ with $\frac{\diff \alpha}{\diff \mu} \in L^{p/(p-1)}(\mu)$.

\end{theorem}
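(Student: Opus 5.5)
The key tool is the symmetry of $\PPP^\rho_t$ in $L^2(\mu)$. Under Assumption \ref{ass:langevin}, $X$ is $\mu$-reversible and $\rho$ is a multiplicative potential, so for nonnegative (or suitably integrable) $f,g$ we have $\langle \PPP^\rho_t f, g\rangle_\mu = \langle f, \PPP^\rho_t g\rangle_\mu$. Since $\PPP^\rho_t$ is positivity preserving and bounded on $L^p(\mu)$, duality shows that it is also bounded on $L^{p'}(\mu)$ with $p'=p/(p-1)\le 2\le p$. Because $\mu$ is finite, $L^p(\mu)\subset L^{q}(\mu)$ for $q\le p$.

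\textbf{(b) $\Rightarrow$ (a).} Boundedness and the ground state are given directly. Applying item (iii) to $g=\mathbf{1}\in L^p(\mu)$ yields the stated weak-$*$ limit, since $\langle\phi_0,\mathbf 1\rangle_\mu=\langle\mu,\phi_0\rangle$.

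\emph{QSD property.} For bounded $g$, symmetry gives $\langle \mu,\phi_0\PPP^\rho_T g\rangle=\langle \PPP^\rho_T\phi_0,g\rangle_\mu=e^{-\lambda_0T}\langle\mu,\phi_0 g\rangle$. Taking $g=\mathbf 1$ and dividing, we get $\langle\alpha,\PPP^\rho_Tg\rangle/\langle\alpha,\PPP^\rho_T\mathbf 1\rangle=\langle\alpha,g\rangle$. This also gives $\langle \alpha,\PPP^\rho_T\mathbf 1\rangle=e^{-\lambda_0T}$, which is the formula for $\lambda_0$.

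\emph{Domain of attraction.} Take $h=\diff\nu/\diff\mu\in L^q(\mu)$ with $q\ge p'$. By symmetry, $\diff\nu_T^T/\diff\mu=\PPP^\rho_T h/\langle\mu,h\PPP^\rho_T\mathbf 1\rangle$. Item (iii) tested against $h\in L^{p'}$ gives $e^{\lambda_0T}\langle \mu,h\PPP^\rho_T\mathbf 1\rangle\to\langle\mu,\phi_0\rangle\langle\mu,h\phi_0\rangle$. This limit is positive since $\phi_0>0$ and $h\ge0$ is nonzero.

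For the numerator, test against $k\in L^{q'}(\mu)$. We have $\langle e^{\lambda_0T}\PPP^\rho_T h,k\rangle_\mu=\langle h,e^{\lambda_0T}\PPP^\rho_Tk\rangle_\mu$. When $k\in L^p$, item (iii) gives the limit $\langle h,\phi_0\rangle_\mu\langle\phi_0,k\rangle_\mu$. To pass to general $k$ we need uniform bounds $\sup_T\|e^{\lambda_0 T}\PPP^\rho_T\|_{L^q\to L^q}<\infty$, after which density of $L^p$ in $L^{q'}$ concludes.

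\emph{Main obstacle: the uniform bound.} I would get it by uniform boundedness. Weak-$*$ convergence in (iii) gives $\sup_{T\ge T_0}\|e^{\lambda_0T}\PPP_T^\rho g\|_p<\infty$ for each $g$, hence uniform operator bounds on $L^p$. Duality then gives the bound on $L^{p'}$, and Riesz–Thorin gives it on every $L^q$ with $p'\le q\le p$. On $[0,T_0]$ the semigroup property handles the bound. For $q>p$ I would use $h\in L^q\subset L^p$ and check whether the limit identification still suffices (possibly only for $q\in[p',p]$). If needed, I would note that the limit $\phi_0\langle h,\phi_0\rangle/(\langle\mu,\phi_0\rangle\langle\mu,h\phi_0\rangle)$ equals $\diff\alpha/\diff\mu$.

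\textbf{(a) $\Rightarrow$ (b).} First show $\PPP^\rho_t\phi_0=e^{-\lambda_0t}\phi_0$. Using the semigroup property,
\[
e^{\lambda_0 s}\PPP_t^\rho\PPP^\rho_s\mathbf 1=e^{-\lambda_0t}e^{\lambda_0(t+s)}\PPP^\rho_{t+s}\mathbf 1 .
\]
As $s\to\infty$ the right side tends to $e^{-\lambda_0t}\phi_0\langle\mu,\phi_0\rangle$. On the left, weak-$*$ continuity of the bounded operator $\PPP^\rho_t$, which is the adjoint of $\PPP^\rho_t$ on $L^{p'}$ by symmetry, gives the limit $\PPP^\rho_t\phi_0\langle\mu,\phi_0\rangle$. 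Dividing by $\langle\mu,\phi_0\rangle>0$ gives the eigen-relation.

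For item (iii) with general $g\in L^p$, pair against $k\in L^{p'}$ and use symmetry: $\langle e^{\lambda_0T}\PPP^\rho_Tg,k\rangle_\mu=\langle g,e^{\lambda_0T}\PPP^\rho_T k\rangle_\mu$. Decomposing $k$ into positive and negative parts, each nonzero part normalized to a probability density $\nu$ lies in the weak-$L^{p'}$ domain of attraction by (a). Combined with the normalization computed via the $\mathbf 1$-limit, this shows $e^{\lambda_0T}\PPP^\rho_Tk\to\phi_0\langle\phi_0,k\rangle_\mu$ weakly in $L^{p'}$, and testing against $g\in L^p$ gives (iii).

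\textbf{Uniqueness.} If $\beta$ is a QSD with density $h\in L^{p'}(\mu)$, then $\beta$ is in the weak-$L^{p'}$ domain of attraction of $\alpha$, and $\beta_T^T=\beta$ for all $T$. Hence $\beta=\alpha$.
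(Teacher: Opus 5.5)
Your proof is correct on the range where the statement actually holds, and it differs from the paper's in two places.

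\emph{Eigen-relation in (a)$\Rightarrow$(b).} The paper derives it from the QSD clause. It invokes the general fact that $\tilde\lambda_0=-\ln\langle\alpha,\PPP^\rho_T\1\rangle/T$ does not depend on $T$ and uses symmetry to get $\PPP^\rho_t\phi_0=e^{-\tilde\lambda_0t}\phi_0$. It then shows $\tilde\lambda_0=\lambda_0$ by pairing $e^{\lambda_0T}\PPP^\rho_T\1$ with $\phi_0$. Your argument combines the semigroup identity with weak-$*$ continuity of $\PPP^\rho_t$ (as the adjoint of $\PPP^\rho_t$ on $L^{p/(p-1)}(\mu)$). It gives the eigen-relation with the correct exponent directly and needs no QSD input.

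\emph{Shared steps.} Your item (iii) and the QSD computation in (b)$\Rightarrow$(a) are the paper's arguments in substance. The paper phrases (iii) through the identity \eqref{eq:nuPTgalpha} instead of moving $\PPP^\rho_T$ onto the test density.

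\emph{Domain of attraction for $q\neq p/(p-1)$.} This is the substantive difference. The paper proves only $q=p/(p-1)$. It then asserts that weak $p$-ergodicity implies weak $\tilde p$-ergodicity for all $\tilde p\in[2,p]$. That assertion is not proved: extending item (iii) from $L^p$ to the larger space $L^{\tilde p}$ requires exactly a bound $\sup_{T}\|e^{\lambda_0T}\PPP^\rho_T\|_{L^{\tilde p}\to L^{\tilde p}}<\infty$. Even granted, it only reaches $q\in[p/(p-1),2]$. Your Banach--Steinhaus, duality and Riesz--Thorin argument supplies that bound and covers all $q\in[p/(p-1),p]$.

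\emph{Local bound on $[0,T_0]$.} The semigroup property alone gives nothing here. What you need is that $\PPP^\rho_t$ is a contraction on every $L^r(\mu)$, $1\le r\le\infty$:
\begin{itemize}
\item it is sub-Markov, so $\|\PPP^\rho_tg\|_{\infty}\le\|g\|_{\infty}$;
\item it is $\mu$-symmetric, so $\int\abs{\PPP^\rho_tg}\diff\mu\le\int\abs{g}\,\PPP^\rho_t\1\diff\mu\le\|g\|_{L^1(\mu)}$;
\item interpolation handles $1<r<\infty$.
\end{itemize}
Incidentally, this makes the boundedness requirements in (a) and in item (ii) automatic. With it, $T\mapsto\langle e^{\lambda_0T}\PPP^\rho_Tg,k\rangle_\mu$ is bounded on all of $[0,\infty)$ for each fixed $g,k$. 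That is what Banach--Steinhaus needs for a continuous parameter.

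\emph{The case $q>p$.} Your doubt is justified: that part of the statement is false in general. A weak limit in $L^q(\mu)$ must lie in $L^q(\mu)$, whereas $\diff\alpha/\diff\mu\propto\phi_0$ is only known to be in $L^p(\mu)$. For example, take dimension one with $U(x)=\sqrt{1+x^2}$ and $\rho=\epsilon\chi$, where $\chi\ge0$ is a smooth bump and $\epsilon>0$ is small.
\begin{itemize}
\item $-\mathcal{L}^\rho$ is unitarily equivalent to $-\frac12\partial_x^2+U_{\mathrm{eff}}+\rho$ on $L^2(\diff x)$, with $U_{\mathrm{eff}}+\rho\to\frac12$ at infinity.
\item Hence $\lambda_0\in(0,\frac12)$ is an isolated simple eigenvalue and the semigroup is weakly $2$-ergodic.
\item But $\phi_0(x)\asymp e^{(1-\sqrt{1-2\lambda_0})\abs{x}}$ and $\mu\asymp e^{-2\abs{x}}\diff x$, so $\phi_0\notin L^q(\mu)$ once $q\ge 2/(1-\sqrt{1-2\lambda_0})$.
\end{itemize}
So the range $[p/(p-1),p]$ you establish is the right one; when $p=\infty$ this is all $q\ge1$.
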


In the following Examples \ref{ex:l2_theory}, \ref{ex:lp_theory}, and \ref{ex:linfty_theory}, we discuss general frameworks in which Theorem \ref{theo:domain_of_attraction} applies, and thus weakly $p$-ergodic holds, respectively for $p = 2$, $2\leq p<+\infty$ and $p = +\infty$.

\begin{example}[$L^2$ spectral theory]
\label{ex:l2_theory}
Consider $L^2(\mu)$, which we endow with its natural inner product $(g,h) \mapsto \langle g, h\rangle_\mu \coloneq \int_{\R^d}gh \diff \mu$. For $x \in \R^d$, define $U_{\mathrm{eff}}(x)\coloneq \frac12 \abs{\nabla U(x)}^2 - \frac12 \Delta U(x)$. Let Assumptions \ref{ass:uniqueFK} and \ref{ass:langevin} hold and additionally assume that the effective potential is confining:
\[\lim_{\abs{x}\to +\infty} U_{\mathrm{eff}}(x) + \rho(x) = +\infty.\]
Then it is known that the operator $\mathcal{L}^\rho$ is negative and essentially selfadjoint in $L^2(\mu)$ \cite[Corollary 10.2]{MR2218016} and that it has a compact resolvent \cite[Theorem XIII.67]{MR493421}. It thus has discrete spectrum and a complete set of eigenfunctions.  More precisely,  there exists an orthonormal sequence of eigenfunctions $(\phi_k)_{k \geq 0} \subseteq L^2(\mu)$ with respective positive eigenvalues $(\lambda_k)_{k \geq 0}$ with $0 < \lambda_0 \leq  \lambda_1 \leq \lambda_2 \leq \cdots$ satisfying
\[\forall k \geq 0, \quad \mathcal{L}^\rho \phi_k = - \lambda_k \phi_k\]
and $\phi_0 > 0 \, \mu$-a.s. Note that for any suitable test function $g \colon \R^d \to \R$,
\[\PPP_t^\rho g = \sum_{k\geq 0} e^{-\lambda_k t} \langle g, \phi_k\rangle_\mu \phi_k,\]
where the convergence of the series is in $L^2(\mu)$ sense. In particular,
\[\PPP_t^\rho \phi_0 = e^{-\lambda_0 t} \phi_0\]
and for all $g\in L^2(\mu)$:
\begin{equation}
\label{eq:wealsonote}
e^{\lambda_0 s} \PPP_s^\rho g \xrightarrow[s\to +\infty]{L^2(\mu)} \langle g, \phi_0\rangle_\mu \phi_0,
\end{equation}
strongly, so the Feynman-Kac is weakly $2$-ergodic according to Definition \ref{def:fk_ergodicity}.
\end{example}

\begin{example}[A Lyapunov-based criterion]
\label{ex:lp_theory}
Let $2 \leq p < +\infty$ and recall the augmented process $\bar{X}$ defined in \eqref{eq:absorbedXN}. Theorem 3.5 in \cite{MR4546021} applied to the hard-killed process $\bar{X}$ provides a Lyapunov-based criterion to get the weak $p$-ergodicity property for $2 \leq  p < +\infty$. Indeed, if assumption (F) in  page 11 of \cite{MR4546021} holds, and additionally we require that $\psi_1^{1/p}$  has at most polynomial growth, where $\psi_1 \colon \R^{d+1} \to [1, +\infty)$ is the Lyapunov function given by (F), and moreover that the Gibbs measure $\mu$ has finite exponential moments (which is the case whenever our Assumption \ref{ass:effective_potential} holds; see Lemma \ref{lemma:exponential_moments} below), then condition (a) in Theorem \ref{theo:domain_of_attraction} is verified as a consequence of Theorem 3.5 in \cite{MR4546021} applied to $\bar{X}$; hence the Feynman-Kac semigroup is weakly $p$-ergodic in this case.
\end{example}

\begin{example}[Coupling-based $L^\infty$ theory]
\label{ex:linfty_theory}
Let $p = +\infty$. Similar to Example \ref{ex:lp_theory}, consider $\bar{X}$ the extended process defined in \eqref{eq:absorbedXN} and let Assumption (A) in \cite{MR3449390} hold (see page 2 therein). In light of Proposition 2.3 and Theorem 2.1 in \cite{MR3449390}, condition (a) in Theorem \ref{theo:domain_of_attraction} is satisfied, so we deduce that under this assumption the Feynman-Kac semigroup is weakly $\infty$-ergodic.
\end{example}

Recall now that  $c^*, C^*>0$ are the constants given in Theorem \ref{thm:contraction_dynamics_intro}. Our main result, Theorem \ref{theo:domain_of_attraction_W1} below, requires the following assumption.

\begin{assumption}
\label{ass:ergodicity}
The Feynman-Kac semigroup is weakly $p$-ergodic for some $p \in [2, +\infty]$.
\end{assumption}

\begin{theorem}\label{theo:domain_of_attraction_W1}

  Suppose  that Assumptions \ref{ass:langevin},\ref{ass:effective_potential} and \ref{ass:ergodicity} for some $p> 2$ hold. Then the probability measure $\alpha$  defined in \eqref{eq:alphadef} is in $  {\cal P}_1(\mathbb{R}^d)$ and its $1$-Wasserstein  domain of attraction  contains any $\nu \in \mathcal{P}(\R^d)$ of the form $\nu=\delta_x$ for some $x\in \mathbb{R}^d$,  or  such that  $\nu \ll \mu$ with $\frac{\diff \nu}{\diff \mu} \in L^{q} (\mu)$ for some $q> p/(p-1)$. 

 Moreover, the families $(\alpha^T)_{T>0}$ and $(\nu^T)_{T>0}$ have uniformly bounded first moments and we have the exponential convergence bound in $1$-Wasserstein distance: 
 $$\W_1(\alpha,\nu_T^T)\leq K(\nu) e^{-c^*T} \quad \mbox{for all } T>0, $$
 with $K(\nu)>0$ a constant depending on $\alpha$ and $\nu$. Last, for each $C' >C^*$, there is a large enough $T_{\nu}>0$  depending on 
$\alpha$ and $\nu$, such that 
 $$\W_1(\alpha,\nu_T^T)\leq  C' e^{-c^*T} \W_1(\alpha^{\infty},\nu^{\infty})<\infty  \quad \mbox{for all } T>T_{\nu}, $$
 where $ \alpha^{\infty},\nu^{\infty} \in  {\cal P}_1(\mathbb{R}^d) $ are the probability measures defined by $\langle \alpha^\infty, g \rangle \coloneq \frac{\langle \alpha, \phi_0 g \rangle}{\langle \alpha, \phi_0\rangle}$ and similarly for $\nu^{\infty}$. 
\end{theorem}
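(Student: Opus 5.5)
The plan is to combine Corollary \ref{coro:contractionconseq} with uniform first-moment bounds on the tilted initial laws $\nu^T$ and $\alpha^T$, and then pass to the limit $T\to+\infty$ in $\W_1(\alpha^T,\nu^T)$. Recall from \eqref{eq:mu0T} and the Feynman--Kac formula that $f^T(0,\cdot)=\PPP^\rho_T\1$, so
\[
\langle \nu^T, g\rangle = \frac{\langle \nu, g\,\PPP^\rho_T\1\rangle}{\langle \nu,\PPP^\rho_T\1\rangle}
= \frac{\langle \nu, g\, e^{\lambda_0T}\PPP^\rho_T\1\rangle}{\langle \nu, e^{\lambda_0T}\PPP^\rho_T\1\rangle}.
\]
By Theorem \ref{theo:domain_of_attraction} and weak $p$-ergodicity, $e^{\lambda_0 T}\PPP^\rho_T\1\to \phi_0\langle\mu,\phi_0\rangle$ weakly-$*$ in $L^p(\mu)$. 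When $h=\frac{\diff\nu}{\diff\mu}\in L^q(\mu)$, the denominator therefore converges to $\langle\mu,\phi_0\rangle\langle \nu,\phi_0\rangle>0$. Hence $\nu^T\to\nu^\infty$ at least weakly. The same holds for $\alpha$, whose density $\phi_0/\langle\mu,\phi_0\rangle$ lies in $L^p\subset L^{q}$ for $q\le p$; this uses $\mu$ finite and $p> p/(p-1)$ since $p>2$.

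The key step is uniform integrability of $|x|$ under $\nu^T$. I would apply Hölder with exponents $r$ and $r'$, choosing $q>p/(p-1)$ so that $1/p+1/q<1$, and define $1/r=1-1/p-1/q$. Then
\[
\langle \nu, |x|\mathbf 1_{|x|>R}\, e^{\lambda_0T}\PPP^\rho_T\1\rangle\le \norm{e^{\lambda_0T}\PPP^\rho_T\1}_{L^p(\mu)}\,\norm{h}_{L^q(\mu)}\,\norm{|x|\mathbf 1_{|x|>R}}_{L^r(\mu)}.
\]
Here $\sup_T\norm{e^{\lambda_0T}\PPP^\rho_T\1}_{L^p}<\infty$ by the uniform boundedness principle, since the family converges weakly-$*$. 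Moreover $|x|\in L^r(\mu)$ because $\mu$ has exponential moments (Lemma \ref{lemma:exponential_moments}, which follows from strict asymptotic convexity of $U$). This gives uniformly bounded first moments of $\nu^T$, $\alpha^T$ and of $\alpha$ itself, together with uniform integrability. Consequently $\nu^T\to\nu^\infty$ and $\alpha^T\to\alpha^\infty$ in $\W_1$, and $\sup_T\W_1(\alpha^T,\nu^T)<\infty$. Corollary \ref{coro:contractionconseq}, together with $\alpha_T^T=\alpha$, then yields $\W_1(\alpha,\nu^T_T)\le C^*e^{-c^*T}\W_1(\alpha^T,\nu^T)\le K(\nu)e^{-c^*T}$. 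For the refined statement, $\W_1(\alpha^T,\nu^T)\to\W_1(\alpha^\infty,\nu^\infty)$, so for any $C'>C^*$ we get $C^*\W_1(\alpha^T,\nu^T)\le C'\W_1(\alpha^\infty,\nu^\infty)$ for $T$ large. This assumes $\W_1(\alpha^\infty,\nu^\infty)>0$; if it vanishes, I would need to handle that degenerate case separately.

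The Dirac case $\nu=\delta_x$ is the main obstacle, since it has no density and so the Hölder argument above does not apply. Here $\nu^T=\delta_x$ for every $T$, so the first-moment bound is trivial. However, $\nu^\infty$ must be interpreted as $\delta_x$, and that requires a pointwise, not merely $L^p$-weak, statement. I would get around this by using the Markov property at a small time: $\delta_x$ run for time $1$ under the conditioned dynamics has law $(\delta_x)^T_1$, which is absolutely continuous with respect to $\mu$ with a density given by the killed heat kernel. That density should lie in $L^q(\mu)$ under the polynomial-growth and asymptotic-convexity assumptions. Then, by Theorem \ref{thm:contraction_dynamics_intro} applied on $[1,T]$ and the flow property of the conditioned dynamics (conditioning on $\{\tau>T\}$ starting from the time-$1$ law), the problem reduces to the density case with an extra constant. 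I expect establishing the $L^q(\mu)$ integrability of this time-$1$ conditioned law, uniformly in $T$, to require the most care.
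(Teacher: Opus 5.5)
Your argument for $\nu\ll\mu$ is correct and is essentially the paper's. You apply Corollary \ref{coro:contractionconseq} with $\alpha^T_T=\alpha$, and then get $\W_1$-convergence $\nu^T\to\nu^\infty$ and $\alpha^T\to\alpha^\infty$. That convergence comes from testing the weak-$*$ limit of $e^{\lambda_0T}\PPP^\rho_T\1$ against $|x|\frac{\diff\nu}{\diff\mu}$, via H\"older with $1/r=1-1/p-1/q$ and Lemma \ref{lemma:exponential_moments}. You apply the same argument to $\alpha$ with $q=p$, which is where $p>2$ enters. Your uniform-boundedness/uniform-integrability packaging is only a cosmetic variant of the paper's direct convergence of first moments. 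The degenerate case you flag is harmless, and the paper glosses over it. Since $\phi_0>0$ $\mu$-a.e., $\nu^\infty=\alpha^\infty$ forces $\frac{\diff\nu}{\diff\mu}\propto\phi_0$, i.e. $\nu=\alpha$, and then $\nu^T_T=\alpha$ for all $T$.

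The gap is the Dirac case, which you leave resting on an unproved $L^q(\mu)$ heat-kernel estimate. That obstacle is illusory. Since $(\delta_x)^T=\delta_x$ for every $T$, Corollary \ref{coro:contractionconseq} gives directly
\[
\W_1\bigl(\alpha,(\delta_x)^T_T\bigr)\le C^*e^{-c^*T}\,\W_1(\alpha^T,\delta_x)\le C^*e^{-c^*T}\Bigl(|x|+\sup_{T>0}\int|y|\,\diff\alpha^T(y)\Bigr).
\]
The uniform first-moment bound on $(\alpha^T)_{T>0}$ is exactly what your H\"older argument already yields when applied to $\alpha$. This is the single line the paper uses for Dirac initial data.

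No pointwise statement is needed to identify $\nu^\infty$ either. It is defined by the explicit formula $\langle\nu^\infty,g\rangle=\langle\nu,\phi_0g\rangle/\langle\nu,\phi_0\rangle$, not as a limit. For $\nu=\delta_x$ the factor $\phi_0(x)$ cancels (for any version of $\phi_0$ with $0<\phi_0(x)<\infty$), so $\nu^\infty=\delta_x=\nu^T$ for all $T$. The refined bound then follows from $\W_1(\alpha^T,\alpha^\infty)\to0$ alone, since this gives $\W_1(\alpha^T,\delta_x)\to\W_1(\alpha^\infty,\delta_x)$.

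Your regularization detour also would not need any uniformity in $T$. By the semigroup property, $(\delta_x)^T_T=\eta^{T-1}_{T-1}$ with $\eta=(\delta_x)^1_1$, which is independent of $T$. But the detour is simply unnecessary: the Dirac case closes with ingredients you already have.
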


The proofs of Theorems \ref{theo:domain_of_attraction} and \ref{theo:domain_of_attraction_W1} are deferred to Section \ref{subsec:proofs_fk}.

\section{Q-process and  Wasserstein exponential ergodicity}
\label{sec:q_process}

We end by studying the law of the process $X$ conditioned to never be absorbed, or the Q-process. More precisely, for $T, S \geq 0$, we consider the limiting behavior of the law $\widetilde{\PP}_\nu^{T+S}$  when $S \to +\infty$, which can be interpreted as the law of the process conditioned to never be absorbed. When that convergence holds, we say that the limiting object is the Q-process. The next result accounts for this fact under our assumptions. Let us write $\bar{\mathbb{D}}_T \coloneq \mathbb{D}([0,T];\R^{d+1})$. 

\begin{proposition}
\label{prop:q_process}
Let Assumptions \ref{ass:langevin} and \ref{ass:ergodicity} hold, and let $\phi_0 \in L^p(\mu)$ and $\lambda_0 > 0$ be the ground state and its corresponding decay rate. Then, for every $T \geq 0$ and $\nu \ll \mu$ with $\frac{\diff \nu}{\diff \mu} \in L^{p/(p-1)}(\mu)$, there exists $\QQ^T_\nu \in \mathcal{P}(\bar{\mathbb{D}}_T)$ such that
\[\PP^{S+T}_\nu \xrightarrow[S \to +\infty]{\text{weakly}} \QQ_\nu^T.\]
We call $\QQ_\nu^T$ the law of the Q-process on $\bar{\mathbb{D}}_T$ with initial law $\nu$. Moreover, we have $\QQ^T_\nu \ll \PP_\nu^T$ with
\[\frac{\diff \QQ^T_\nu}{\diff \PP_\nu^T} = \frac{e^{\lambda_0 T}\phi_0(X_T) \1_{\{N_T=0\}}}{\int_{\R^d} \phi_0(x) \diff \nu(x)}.\]
\end{proposition}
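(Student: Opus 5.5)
The plan is to compute explicitly the Radon--Nikodym density of the time-$[0,T]$ restriction of the conditioned law $\widetilde{\PP}^{S+T}_\nu$ with respect to $\PP^T_\nu$. I read the statement $\PP^{S+T}_\nu \to \QQ^T_\nu$ as this restriction converging. I will then pass to the limit $S\to+\infty$ using the weak-$*$ convergence in Definition \ref{def:fk_ergodicity}(iii). This gives more than weak convergence: convergence of $\widetilde{\EE}^{S+T}_\nu[F]$ for every bounded measurable functional $F$ of $\bar X_{[0,T]}$.

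\textbf{Step 1 (density at finite $S$).} Fix $F\in\mathbb{B}(\bar{\mathbb{D}}_T)$. Conditioning on $\bar{\mathcal F}_T$ and using the Markov property of $\bar X$ together with \eqref{eq:distribution_tau_exponential1}, I get
\[
\widetilde{\EE}^{S+T}_\nu[F]=\frac{\EE^T_\nu\big[F\,\1_{\{N_T=0\}}\,\PPP^\rho_S\1(X_T)\big]}{\langle \nu,\PPP^\rho_{S+T}\1\rangle}.
\]
I multiply numerator and denominator by $e^{\lambda_0(S+T)}$. The denominator becomes $\langle \nu, e^{\lambda_0(S+T)}\PPP^\rho_{S+T}\1\rangle$. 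Since $\frac{\diff\nu}{\diff\mu}\in L^{p/(p-1)}(\mu)$ and $\1\in L^p(\mu)$ ($\mu$ being finite), weak-$*$ ergodicity gives that the denominator converges to $\langle\nu,\phi_0\rangle\langle\mu,\phi_0\rangle$. This limit is positive because $\phi_0>0$ $\mu$-a.s. and $\nu\ll\mu$.

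\textbf{Step 2 (numerator, the main obstacle).} I write the numerator as $e^{\lambda_0T}\int h_S\,G_F\,\diff\mu$, where $h_S\coloneq e^{\lambda_0S}\PPP^\rho_S\1$. Here $G_F$ is the density with respect to $\mu$ of the signed measure $A\mapsto\EE^T_\nu[F\1_{\{N_T=0\}}\1_A(X_T)]$.

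The key point is that $G_F\in L^{p/(p-1)}(\mu)$. First, $|G_F|\le\|F\|_\infty G_1$. By the $\mu$-symmetry of the Feynman--Kac semigroup (Assumption \ref{ass:langevin}), for $g\ge0$,
\[
\int g\,G_1\diff\mu=\langle\nu,\PPP^\rho_Tg\rangle=\Big\langle \PPP^\rho_T\tfrac{\diff\nu}{\diff\mu},\,g\Big\rangle_\mu .
\]
Hence $G_1=\PPP^\rho_T\frac{\diff\nu}{\diff\mu}$. Since $\PPP^\rho_T$ is bounded on $L^p(\mu)$, duality plus symmetry make it bounded on $L^{p/(p-1)}(\mu)$, so $G_1\in L^{p/(p-1)}(\mu)$. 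I expect this justification to be the delicate part: making the symmetry rigorous on the relevant $L^q$ spaces, for instance by approximating with bounded functions and using positivity.

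Granting this, weak-$*$ convergence of $h_S$ to $\phi_0\langle\mu,\phi_0\rangle$ in $L^p(\mu)$ yields
\[
\text{numerator}\to e^{\lambda_0T}\langle\mu,\phi_0\rangle\int\phi_0G_F\diff\mu=e^{\lambda_0T}\langle\mu,\phi_0\rangle\,\EE^T_\nu\big[F\1_{\{N_T=0\}}\phi_0(X_T)\big].
\]

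\textbf{Step 3 (identification of the limit).} The factors $\langle\mu,\phi_0\rangle$ cancel, so
\[
\widetilde{\EE}^{S+T}_\nu[F]\to\EE^T_\nu\big[F\,D_T\big],\qquad D_T\coloneq\frac{e^{\lambda_0T}\phi_0(X_T)\1_{\{N_T=0\}}}{\langle\nu,\phi_0\rangle}.
\]
To check that $D_T$ is a probability density, take $F=\1$: then $\EE^T_\nu[\phi_0(X_T)\1_{\{N_T=0\}}]=\langle\nu,\PPP^\rho_T\phi_0\rangle=e^{-\lambda_0T}\langle\nu,\phi_0\rangle$ by Definition \ref{def:fk_ergodicity}(i). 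Moreover $D_T\ge0$ and $D_T\in L^1(\PP^T_\nu)$ by the same Hölder argument as in Step 2. Therefore $\QQ^T_\nu\coloneq D_T\cdot\PP^T_\nu$ is a probability measure on $\bar{\mathbb D}_T$. Convergence against all bounded measurable $F$, in particular against $F\in C_{\mathrm b}(\bar{\mathbb D}_T)$, gives the claimed weak convergence and the stated formula for $\frac{\diff\QQ^T_\nu}{\diff\PP^T_\nu}$.
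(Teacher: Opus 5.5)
Your proposal is correct and follows essentially the same route as the paper. Both condition on $\bar{\mathcal F}_T$ to produce the factor $\PPP^\rho_S\1(X_T)\1_{\{N_T=0\}}$, renormalize numerator and denominator by $e^{\lambda_0(S+T)}$, and let $S\to+\infty$ using the weak-$*$ convergence of Definition \ref{def:fk_ergodicity}(iii), with the convergence read (as the paper's proof also does) as that of the $[0,T]$-restriction of $\widetilde{\PP}^{S+T}_\nu$.

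Your Step 2 uses $\mu$-symmetry (essentially Lemma \ref{lemma:basicFK}) to show that the $F$-weighted law of $X_T$ on $\{N_T=0\}$ has a density in $L^{p/(p-1)}(\mu)$. This makes explicit what the paper compresses into ``by the weak $p$-ergodicity assumption and the integrability of $\nu$''. It also gives convergence against all bounded measurable $F$, not only $F\in C_{\mathrm b}(\bar{\mathbb D}_T)$.
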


We will see that, in an equivalent way, the law of the Q-process is determined by the Markovian semigroup, symmetric with respect to the probability measure $\diff \beta \coloneq \phi_0\diff \alpha$, that is associated with the SDE
\begin{equation}
\label{eq:dynamics_q_process_intro}
\diff Y_t = \diff B_t - \nabla(U-\ln \phi_0)(Y_t) \diff t. 
\end{equation}
This opens the door to studying its trend to its equilibrium $\beta$ by pathwise methods in a similar line as utilized to prove Theorem \ref{thm:contraction_dynamics_intro}. Along this line, we are also able to state exponentially fast convergence of the Q-process towards its equilibrium $\beta$.

\begin{theorem}
\label{prop:q_process_exp_fast_intro}
Let Assumptions \ref{ass:langevin}, \ref{ass:ergodicity}, and \ref{ass:effective_potential} hold and additionally assume that $U_{\mathrm{eff}} + \rho$ has a bounded Hessian. Let $Y = (Y_t)_{t \geq 0}$ be the dynamics of the Q-process, which solves the stochastic differential equation \eqref{eq:dynamics_q_process_intro}
and let $\diff \beta = \phi_0\diff \alpha$ be its invariant distribution. Then, with   $C^*, c^* > 0$ denoting the constants considered in Theorem \ref{thm:contraction_dynamics_intro}, we have 
\[\forall t \geq 0, \quad \W_1(\Law(Y_t), \beta) \leq C^* e^{-c^*t}.\]
\end{theorem}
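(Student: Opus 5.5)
Our plan is to view the Q-process as the limit $T\to+\infty$ of the conditioned dynamics \eqref{eq:SDEcond} and to pass the uniform-in-$T$ contraction of Theorem \ref{thm:contraction_dynamics_intro} to this limit. For fixed $t$ and $S\geq 0$, let $X^{x,t+S}$ solve \eqref{eq:SDEcond} on $[0,t+S]$. Its drift is $-\nabla U+\nabla\ln f^{t+S}(s,\cdot)$, and $f^{t+S}(s,x)=\PPP^\rho_{t+S-s}\1(x)$. By weak $p$-ergodicity (Definition \ref{def:fk_ergodicity} and Theorem \ref{theo:domain_of_attraction}(a)), $e^{\lambda_0(t+S-s)}\PPP^\rho_{t+S-s}\1\to\langle\mu,\phi_0\rangle\phi_0$, so formally $\nabla\ln f^{t+S}(s,\cdot)\to\nabla\ln\phi_0$ as $S\to\infty$. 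The limiting drift is therefore exactly that of \eqref{eq:dynamics_q_process_intro}. Proposition \ref{prop:q_process} makes this rigorous at the level of laws: the $[0,t]$-marginal of $\widetilde\PP^{t+S}_\nu$ converges weakly to $\QQ^t_\nu$, whose $X$-component is the law of $Y_{[0,t]}$ with $Y_0\sim\nu^\infty$ (density $\propto\phi_0$ w.r.t.\ $\nu$).

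The key steps are as follows.

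\emph{Step 1.} Identify $\QQ^t_\nu$ with the law of the solution of \eqref{eq:dynamics_q_process_intro}. From the density $e^{\lambda_0 t}\phi_0(X_t)\1_{\{N_t=0\}}/\langle\nu,\phi_0\rangle$, an $h$-transform via Girsanov and Itô applied to $\ln\phi_0$ shows this, using $\LL^\rho\phi_0=-\lambda_0\phi_0$ and smoothness of $\phi_0$ by elliptic regularity.

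\emph{Step 2.} Prove the pointwise-start contraction $\W_1(\Law(Y^x_t),\Law(Y^y_t))\leq C^*e^{-c^*t}|x-y|$. We take $\nu=\delta_x$, approximated by $\nu_n\ll\mu$ with bounded densities concentrating at $x$. Applying \eqref{eq:contraction_dynamics_intro_2} at horizon $T=t+S$ and letting $S\to\infty$ uses lower semicontinuity of $\W_1$ under weak convergence, while the right-hand side tends to $\W_1(\nu_n^\infty,\eta_n^\infty)$ by Theorem \ref{theo:domain_of_attraction_W1}. Sending $n\to\infty$ then gives the claim.

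A cleaner alternative to Step 2, which we would try first, is to redo the proof of Theorem \ref{thm:contraction_dynamics_intro} directly for $Y$. That proof rests on a weak-convexity lower bound for $-\ln f^T(s,\cdot)$, propagated from $-\ln 1$ by Proposition \ref{prop:conforti}, which is uniform in $T$ and $s$. Since $\ln\phi_0=\lim_{T}\big(\ln f^T(s,\cdot)+\lambda_0(T-s)\big)+\mathrm{const}$, the profile $\kappa_{-\ln\phi_0}$ inherits the same lower bound. This limit passage is where the extra hypothesis that $U_{\mathrm{eff}}+\rho$ has bounded Hessian enters: it gives locally uniform (in fact $C^1$) convergence of $\nabla\ln f^T\to\nabla\ln\phi_0$ via uniform semiconcavity and Lipschitz bounds on $\ln f^T$, in the spirit of the HJB estimates of \cite{chaintron25propagation}. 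Consequently $U-\ln\phi_0$ has a weak convexity profile bounded below by the same function used for the conditioned flows. Eberle's reflection coupling (Theorem \ref{thm:eberle}) then yields the contraction with the same $C^*,c^*$.

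\emph{Step 3.} Conclude. The measure $\beta$ is invariant for $Y$, by symmetry of the generator with respect to $\phi_0^2\mu\propto\beta$. Hence $\W_1(\Law(Y_t),\beta)\leq\int\!\!\int\W_1(\Law(Y^x_t),\Law(Y^y_t))\,\diff\Law(Y_0)(x)\diff\beta(y)\leq C^*e^{-c^*t}\W_1(\Law(Y_0),\beta)$. The constant in the statement absorbs $\W_1(\Law(Y_0),\beta)$; we will read the statement with this factor normalized, or as bounded for the admissible initial laws using the first-moment bounds of Theorem \ref{theo:domain_of_attraction_W1}.

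The main obstacle is the limit $T\to\infty$ in Step 2: transferring the uniform weak-convexity estimates on $-\ln f^T$ to $-\ln\phi_0$. Weak-$*$ convergence alone does not control gradients. We would first try to use the bounded Hessian of $U_{\mathrm{eff}}+\rho$ to obtain uniform two-sided Hessian bounds on $\ln f^T$, and then apply Arzelà–Ascoli to get $C^1_{\mathrm{loc}}$ convergence.
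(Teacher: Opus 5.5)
Your argument is correct in substance, and its endgame is the paper's. The paper gets strict asymptotic convexity of $U-\ln\phi_0$ from Proposition \ref{prop:conforti}(ii), i.e.\ by citing \cite[Theorem 1.3]{chaintron25propagation}; that citation is the only place the bounded Hessian of $U_{\mathrm{eff}}+\rho$ is used. It then applies Theorem \ref{thm:eberle} to the time-homogeneous SDE \eqref{eq:dynamics_q_process_intro} and uses invariance of $\beta$.

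You obtain the convexity differently: you let $T\to\infty$ in the uniform bound of Proposition \ref{prop:conforti}(i), via $\nabla\ln f^T(s,\cdot)=\nabla\ln\PPP^\rho_{T-s}\1\to\nabla\ln\phi_0$. Your first version of Step 2 differs even more. It never uses convexity of $U-\ln\phi_0$, and instead sends $S\to\infty$ in Corollary \ref{coro:contractionconseq}, using Proposition \ref{prop:q_process} and the convergence $\W_1(\nu^T,\nu^\infty)\to0$ from the proof of Theorem \ref{theo:domain_of_attraction_W1}. The price is identifying $\QQ^t_\nu$ with the law of the SDE and using continuity of $Y$ in its initial condition.

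Your limit argument yields exactly the profile $\bar\kappa$, integrability at $0$ included. Theorem \ref{thm:eberle} then gives literally the constants $C^*,c^*$ of Theorem \ref{thm:contraction_dynamics_intro}, as the statement claims. Proposition \ref{prop:conforti}(ii), as recorded, only gives strict asymptotic convexity with an unspecified profile, so the paper's one-line proof does not by itself deliver those constants. The paper's route buys brevity.

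You only need local gradient convergence, so the global two-sided Hessian bounds on $\ln f^T$ you propose are unnecessary, and they are not obviously available. Set $w_\tau\coloneq e^{\lambda_0\tau}\PPP^\rho_\tau\1$.
\begin{itemize}
\item By Banach--Steinhaus, $w_\tau$ is eventually bounded in $L^p(\mu)$.
\item Writing $w_{\tau+1}=e^{\lambda_0}\PPP^\rho_1 w_\tau$, it is then locally bounded uniformly in large $\tau$.
\item Interior parabolic estimates and Arzel\`a--Ascoli give $w_\tau\to\langle\mu,\phi_0\rangle\phi_0$ in $C^1_{\mathrm{loc}}$.
\item The strong maximum principle gives $\phi_0>0$ everywhere.
\end{itemize}
So along your route the bounded-Hessian hypothesis does not appear to be needed.

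Two small points. First, you are right that the displayed bound must carry the factor $\W_1(\Law(Y_0),\beta)$; that is what Theorem \ref{thm:eberle} gives and what the paper's proof actually yields. Second, in Step 3, integrating against the product measure $\diff\Law(Y_0)\diff\beta$ only gives $C^*e^{-c^*t}\iint\abs{x-y}\diff\Law(Y_0)(x)\diff\beta(y)$. Use an optimal coupling of $\Law(Y_0)$ and $\beta$ instead. Simpler still, apply Theorem \ref{thm:eberle} directly with $\eta=\Law(Y_0)$ and $\eta'=\beta$, so that $\eta'_t=\beta$; this makes the pointwise contraction of Step 2 unnecessary.
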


The proofs of Proposition \ref{prop:q_process} and Theorem \ref{prop:q_process_exp_fast_intro} are deferred to Section \ref{subsec:proofs_qprocess}.

\section{Proofs of the main results}
\label{sec:proofs}

\subsection{Proof of Theorem \ref{thm:main_i}}
\label{subsec:proof_main_i}
\begin{proof}[Proof of Theorem \ref{thm:main_i}]
Let $\mathbb{Q}\ll \PP^T_{\nu}$ be a probability measure such that $\Law_{\mathbb{Q}}(N_T)= \delta_0 = \Law_{\widetilde \PP_{\nu}}( N_T )$
and write $D_T \coloneq  \frac{\diff \mathbb{Q}}{\diff \PP^T_{\nu}}$. By Jensen's inequality,
$$ \EE_{\nu}[\varphi(D_T)]=\EE_{\nu}[\EE[\varphi(D_T) | N_T] ] \geq \EE_{\nu}[\varphi(\EE [D_T\vert N_T] ) ]. $$
On the other hand, for $f \colon \N \to \R$,
$$\frac{1}{\PP_{\nu}(N_T=0 )}\EE_{\nu}[f(N_T)\mathbf{1}_{N_T=0}] = f(0)=\EE^{\mathbb{Q}}[f(N_T)] = \EE_{\nu}[f(N_T)D_T],   $$
which implies that
$$ \EE[D_T|N_T ]=\frac{\mathbf{1}_{N_T =0 }}{\PP_{\nu}( N_T = 0) } = \frac{\diff  \widetilde{\PP}_{\nu}^T}{\diff \PP^T_{\nu}} \ \mathbb{P}_{\nu} \mbox{-a.s.}$$
Hence
$$ \EE[\varphi(D_T)] \geq \EE\left[\varphi\left(\frac{\diff \widetilde{\PP}_{\nu}^T}{\diff \PP^T_{\nu}} \right) \right]. $$
That is, $\widetilde{\PP}_{\nu}^T$  minimizes $\mathbb{H}^T_\varphi$ over the set $\{\mathbb{Q} \in \mathcal{P} (\mathbb{D}([0,T],\R^d)): \Law_{\mathbb{Q}}( N_T )= \delta_0 \}$.
\end{proof}

\begin{remark}
Since the process $X$ is absorbed at $\partial$ upon hitting that set for the first time, for any $S>0$ the law of $N_T$ under $\widetilde{\mathbb{P}}_{\nu}^{T, T+S} \coloneq \widetilde{\mathbb{P}}_{\nu}^{T+S}\circ {X}_{[0,T]}^{-1}$ is equal to $\delta_0$.  Consequently, we have 
$$\mathbb{H}^T_\varphi (\widetilde{\PP}_{\nu}^T)\leq \mathbb{H}_\varphi^T (\widetilde{\mathbb{P}}_{\nu}^{T,T+S}).$$

\end{remark}

\subsection{Preliminaries on softly killed  diffusion process}\label{subsec:prelim}

We will work with  a specific realization of the Markov process  $X$ killed at state-dependent rate $\rho(X_t)$, defined by using an auxiliary Poisson point measure. This construction is certainly  well-known and of course is equivalent to the simpler one using a time-changed independent exponential variable, but it facilitates the use of stochastic analysis techniques to establish Theorem \ref{thm:main_ii_0} and Corollary \ref{cor:valueentropy}.  We notice  that this construction  applies to an arbitrary Markov process $X$.

\begin{notation}
$~$
\begin{itemize}
\item We enlarge the probability space  $(\Omega, \mathcal{F},  \mathbb{P})$ on which the process $X$ is defined, by means of an independent Poisson point measure ${\mathcal N}$ on $\R_+ \times \R_+$ with intensity the Lebesgue measure and a random variable $N_0$ in $\N$ independent from it and from $X$. The enlarged space is still denoted $(\Omega, \mathcal{F},  \mathbb{P})$

\item We write $(\mathcal{G}_t)_{t \geq 0}$ for the filtration in this space  generated for each $t\geq 0 $ by $\mathcal{F}_t$, $N_0$ and the process $({\cal N}([0,t]\times A): A\subset \mathbb{R}_+$ Borel).
\end{itemize}
\end{notation}

Let us define a random intensity process  $\lambda = (\lambda_t)_{t \geq 0}$ by 
\begin{equation}
\label{eq:def_intensity}
\forall t \geq 0, \quad \lambda_t \coloneq \rho(X_t)
\end{equation}
and an associated counting process
$N^\lambda = (N^\lambda_t)_{t \geq 0}$ by
\begin{equation}
\label{eq:def_general_counting_process}
\forall t \geq 0, \quad N^\lambda_t \coloneq N_0+\int_0^t\int_{\R_+} \1_{\{z \leq \lambda_s\}} \diff \mathcal{N}( s, z).
\end{equation}
From the properties of ${\cal N}$, the process $(X,N^{\lambda})$ taking values in $\mathbb{R}^d\times \mathbb{N}\subset \mathbb{R}^{d+1}$ can easily seen to be Markov. 
An application  of It\^o's formula shows its infinitesimal generator is given by
$$\LL^{(X,N^\lambda)} f(x,n) = \LL^X f(\cdot,n)(x) + \rho(x)[f(n+1,x)-f(n,x)], $$
for functions $f\colon \mathbb{R}^d\times \mathbb{N} \to \R$ such that for all $n \in  \mathbb{N}$, $f(\cdot, n) $ is in the domain of $\LL^X$, the infinitesimal generator  of the process $X$.

Set now $\tau =  \inf \{t\geq 0 : N_t^\lambda \geq 1\}$ and define a process $N$  by 
$$N=(N_t)_{t\geq 0}\coloneq (N^{\lambda}_{t\wedge \tau})_{t \geq 0} $$
which satisfies $N_t=\mathbf{1}_{\{\tau\leq t\}}$ $\PP$-a.s. on the event $\{N_0=0\}$. 
The following simple result, proved here for completeness, confirms that the  Markov process 
$(X_{t\wedge \tau}, N_{t})_{t \geq 0}$ just defined is a concrete realization of  the  process $(\bar{X})_{t \geq 0}$ given by \eqref{eq:absorbedXN}.

\begin{lemma}
\label{lemma:killing_via_intensity}
For every $\nu\in {\cal P}(\R^d)$ the  $({\cal G}_t)_{t\geq 0}$-stopping time  $\tau$ satisfies, for all $T>0$, 
$${\PP}_{\nu\otimes \delta_0}(\tau > T | X_{[0,T]}) = \exp\left(-\int_0^T \rho(X_s)\diff s\right),$$ 
where ${\PP}_{\nu\otimes \delta_0}$ stands for the law of $(X,N)$ initialized with law  $\nu\otimes \delta_0$.
\end{lemma}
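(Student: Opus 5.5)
The plan is to condition on the whole path $X_{[0,T]}$ and use the independence of the Poisson point measure $\mathcal{N}$ from $(X, N_0)$. Under $\PP_{\nu\otimes\delta_0}$ we have $N_0=0$. Hence, by the definition of $\tau$ and of $N^\lambda$, the event $\{\tau>T\}$ coincides with $\{N^\lambda_T=0\}$, that is, with
\[
\Bigl\{\mathcal{N}\bigl(\{(s,z)\in[0,T]\times\R_+ : z\leq \rho(X_s)\}\bigr)=0\Bigr\}.
\]
Write $A_X\coloneq\{(s,z)\in[0,T]\times\R_+: z\leq\rho(X_s)\}$ for this random region below the graph of $s\mapsto\rho(X_s)$.

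First I will check that $A_X$ is a measurable function of the path. Since $\rho$ is continuous (locally bounded Borel would suffice) and $X$ is right-continuous, the map $(s,\omega)\mapsto\rho(X_s(\omega))$ is jointly measurable. The Lebesgue measure of $A_X$ is then $\int_0^T\rho(X_s)\diff s$ by Fubini, and it is finite because $s\mapsto \rho(X_s)$ is bounded on $[0,T]$ (càdlàg paths have compact closure of their range, and $\rho$ is locally bounded).

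Next, since $\mathcal{N}$ is independent of $\sigma(X_{[0,T]})$, the freezing lemma for conditional expectations gives
\[
\PP(\tau>T\mid X_{[0,T]})=h(X_{[0,T]}), \qquad h(x)\coloneq \PP\bigl(\mathcal{N}(A_x)=0\bigr),
\]
evaluated at a fixed deterministic path $x$. For fixed $x$, the count $\mathcal{N}(A_x)$ is Poisson distributed with mean $\mathrm{Leb}(A_x)=\int_0^T\rho(x_s)\diff s$. Therefore $h(x)=\exp\bigl(-\int_0^T\rho(x_s)\diff s\bigr)$, which is the claim.

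The only delicate point is justifying the freezing step. This requires joint measurability of $(x,\mathcal{N})\mapsto \1_{\{\mathcal{N}(A_x)=0\}}$. I would obtain it by writing $\mathcal{N}(A_x)=\int\1_{\{z\leq\rho(x_s),\,s\leq T\}}\diff\mathcal{N}(s,z)$ and approximating the integrand by simple functions, or by a monotone class argument on functions of the form $F(x)G(\mathcal{N})$.

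Finally, I would note that $\tau$ is a $(\mathcal{G}_t)$-stopping time because $N^\lambda$ is $(\mathcal{G}_t)$-adapted and right-continuous with jumps of size one, so $\tau$ is its first hitting time of the closed set $[1,\infty)$.
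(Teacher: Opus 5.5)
Your proof is correct and follows essentially the same route as the paper: identify $\{\tau>T\}$ with $\{N^\lambda_T=0\}$, then use that, conditionally on $X_{[0,T]}$, $N^\lambda_T$ is Poisson with parameter $\int_0^T\rho(X_s)\diff s$ because $\mathcal{N}$ is independent of $X$. Your freezing-lemma and joint-measurability argument supplies detail for the conditional Poisson step, which the paper simply asserts.
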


\begin{proof}
By the definition  \eqref{eq:def_general_counting_process} of $N_t^\lambda$, the events $\{\tau > T\}$ and $\{N_T=0\}$ coincide ${\PP}_{\nu\otimes \delta_0}$-a.s. Moreover, 
conditionally on $X_{[0,T]}$, the random variable $N_T^\lambda$ is Poisson of parameter $\int_0^T \lambda_t \diff t$. It follows that 
$$ {\PP}_{\nu\otimes \delta_0}(\tau > T | X_{[0,T]})  = {\PP}_{\nu\otimes \delta_0}(N_T^\lambda = 0|X_{[0,T]}) = \exp\left(-\int_0^T\lambda_t\diff t \right) = \exp\left(-\int_0^T\rho(X_t)\diff t \right).$$
\end{proof}

Recall that we use the shorthands $\PP^T_{\nu}  $ for the joint law of  $(X,N)_{[0,T]}$ under ${\PP}_{\nu\otimes \delta_0}$ and $\widetilde{\PP}^T_{\nu}$  for its law conditioned on $\{\tau>T\} =\{N_T=0\} $.  Notice that the filtration $( \bar{{\cal F}}_t)_{t\geq 0}$  defined in Section \ref{subsec:condiffkillfoll} by 
$ \bar{{\cal F}}_t= {\cal F}_t\vee \{\tau\leq t \} $ satisfies $ \bar{{\cal F}}_t\subset  {\cal G}_t$ for all $t\geq 0$.

Let us set 
$$ Z_T \coloneq \frac{\diff  \widetilde{\PP}_{\nu}^T}{\diff  \PP^T_{\nu}}=\frac{\mathbf{1}_{N_T =0 }}{\PP^T_{\nu}( N_T = 0) } \mbox{ and }Z_t \coloneq \EE_{\nu}^T[Z_T\vert \bar{{\cal F}}_t] \mbox{ for }t\in [0,T] $$
The following result provides a (first) more explicit form of the martingale  $(Z_t)_{t\in [0,T]}$.

\begin{lemma}
\label{lemma:computation_Z}
For each $t \in [0,T]$ we have $\PP^T_{\nu}$-a.s : 
\[Z_t = \frac{\1_{\{N_t = 0\}}}{\PP_{\nu}^T(N_T=0)}\EE_{\nu}^T\left[e^{-\int_t^{T}\rho(X_s)\diff s}\middle| X_t \right].\]
\end{lemma}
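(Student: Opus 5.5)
We need to compute $Z_t=\EE^T_\nu[Z_T\mid\bar{\mathcal F}_t]$, where $Z_T=\1_{\{N_T=0\}}/\PP^T_\nu(N_T=0)$. Since the normalizing constant is deterministic, it suffices to identify $\PP_{\nu\otimes\delta_0}(N_T=0\mid\bar{\mathcal F}_t)$. The plan is to first condition on the larger $\sigma$-field $\mathcal G_t$, use the Poisson construction of Section \ref{subsec:prelim} to get an explicit formula, and then project down to $\bar{\mathcal F}_t$.

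\textbf{Reduction to the survivors.} The process $N$ is nondecreasing, so $\{N_T=0\}=\{N_t=0\}\cap\{N^\lambda_T-N^\lambda_t=0\}$. Moreover $\{N_t=0\}\in\bar{\mathcal F}_t\subset\mathcal G_t$. Hence
\[
\PP(N_T=0\mid\mathcal G_t)=\1_{\{N_t=0\}}\,\PP\bigl(\mathcal N(\{(s,z):s\in(t,T],\ z\le\rho(X_s)\})=0\,\big|\,\mathcal G_t\bigr).
\]

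\textbf{Computing the conditional probability on $\mathcal G_t$.} Write $\theta$ for the time shift. The restriction of $\mathcal N$ to $(t,T]\times\R_+$ is independent of $\mathcal G_t$ and of $X$. Conditioning additionally on $X_{[t,T]}$ and arguing exactly as in Lemma \ref{lemma:killing_via_intensity}, the count above is Poisson with parameter $\int_t^T\rho(X_s)\diff s$. This gives
\[
\PP(\cdot\mid\mathcal G_t\vee\sigma(X_{[t,T]}))=e^{-\int_t^T\rho(X_s)\diff s}.
\]
Taking the conditional expectation given $\mathcal G_t$, and using the Markov property of $X$ with respect to $\mathcal G_t$, we get
\[
\EE\bigl[e^{-\int_t^T\rho(X_s)\diff s}\mid\mathcal G_t\bigr]=h(X_t),\qquad h(x)\coloneq\EE_x\bigl[e^{-\int_0^{T-t}\rho(X_s)\diff s}\bigr].
\]
The Markov property holds for this filtration because $X$ is a strong solution driven by $B$, and $N_0$ and $\mathcal N$ are independent of $B$. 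Therefore $h(X_t)=\EE[e^{-\int_t^T\rho}\mid X_t]$.

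\textbf{Projecting onto $\bar{\mathcal F}_t$.} The quantity $\1_{\{N_t=0\}}h(X_t)$ is $\bar{\mathcal F}_t$-measurable, and $\bar{\mathcal F}_t\subset\mathcal G_t$. By the tower property, it is therefore also the conditional expectation given $\bar{\mathcal F}_t$. Dividing by $\PP^T_\nu(N_T=0)$ yields the claimed expression for $Z_t$.

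\textbf{Main obstacle.} The only delicate point is justifying the Markov property of $X$ at time $t$ with respect to the enlarged filtration $\mathcal G_t$, together with the independence of the future Poisson increments. I would handle both through the independence of $(B,X_0)$, $N_0$ and $\mathcal N$, and through the independent increments of $B$ and $\mathcal N$ beyond time $t$. Right-continuity and completion of the filtrations should not affect the a.s. identities.
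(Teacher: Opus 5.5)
Your proof is correct and follows essentially the paper's route: condition first on $\mathcal G_t$, factor out $\1_{\{N_t=0\}}$, identify the conditional survival probability on $(t,T]$ as $\EE_{X_t}[e^{-\int_0^{T-t}\rho(X_s)\diff s}]$, and project onto $\bar{\mathcal F}_t\subset\mathcal G_t$ by the tower property. The paper gets the middle step from the Markov property of the pair $(X,N)$ together with Lemma~\ref{lemma:killing_via_intensity} applied to the process restarted at $(X_t,0)$; you instead redo the Poisson computation directly on $(t,T]$, so you only need the Markov property of $X$ with respect to $\mathcal G_t$, which you justify more explicitly than the paper does.
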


\begin{proof}
Since the process $(N_t)_{t \in [0,T]}$ is $\PP_{\nu}^T$-a.s. non-decreasing, we have $\{N_T = 0\} \subseteq \{N_t = 0\}$ $\PP_{\nu}^T$-a.s., hence $\EE_{\nu}^T[\1_{\{N_T=0\}}|\mathcal{G}_t] = \EE_{\nu}^T[\1_{\{N_T - N_t=0\}}|{\mathcal{G}}_t] \1_{\{N_t = 0\}} $. The latter is $\bar{\cal F_t}$-measurable, which gives us
\[\EE_{\nu}^T[\1_{\{N_T=0\}}|\bar{\mathcal{F}}_t] = \EE_{\nu}^T[\1_{\{N_T - N_t=0\}}|\bar{\mathcal{F}}_t] \1_{\{N_t = 0\}}=\PP_{(X_t,0)}[N_{T-t}=0] \1_{\{N_t = 0\}}\]
by the Markov property of the process $(X,N)$ and with $\PP_{(y,0)}$ denoting its law when starting from $(y,0)$.   On the other hand, by Lemma \ref{lemma:killing_via_intensity}, 
\begin{equation*} 
\EE_{(y,0)}[\1_{\{N_{T-t}=0\}} ] = \EE_{y}[\EE_y [\1_{\{N_{T-t}=0\}} \vert X_{[0,T-t]}]  ] = \EE_{y}\left[e^{-\int_0^{T-t}\rho(X_s)\diff s} \right] .
\end{equation*}
Since $\EE\left[e^{-\int_t^{T}\rho(X_s)\diff s}\middle| X_t \right]= \EE_{X_t}\left[e^{-\int_0^{T-t}\rho(X_s)\diff s} \right] $ by the Markov property of $X$, the statement follows. 

\end{proof}

\begin{remark}\label{rk:value_fT(0,x)}
Under Assumption \ref{ass:uniqueFK}, by the Feynman-Kac representation \cite[Chapter 5, \S 7.B]{MR1121940}, the function $f^T\colon [0,T]\times \R^d \to \R_+$ solution to \eqref{eq:pde_fT} is given by
\begin{equation}
\label{eq:def_fT}
\forall (t, y) \in [0,T]\times \R^d, \quad f^T(t,y)\coloneq \EE\left[e^{-\int_t^{T}\rho(X_s)\diff s}\middle| X_t=y\right].
\end{equation}
In particular, by Lemma \ref{lemma:killing_via_intensity}, we have
\begin{equation}\label{eq:f0=PNT0}
    f^T(0,x) = \EE_x\left[e^{-\int_0^{T}\rho(X_s)\diff s}\right] = \PP_x^T(N_T = 0).
    \end{equation}
    By homogeneity of the Markov process $X$, for all $0\leq t\leq T$ and $x\in \R^d$ we also have
$$ f^T(t,x)=f^{T-t}(0,x).$$
Hence, defining a function $f\colon \R_+\times \R^d\to \R$ by
$$ f(t,x)= f^{t}(0,x)$$
we readily see that $f$ is the unique solution of the {\it forward} Cauchy problem: 

\begin{equation}
\label{eq:pde_ffor}
\begin{cases}
\partial_s \phi (s,y)  = \LL^X \phi (s,y) - \rho (y) \phi(s,y), \quad (s,y)\in [0,+\infty)\times \mathbb{R}^d\\
\phi(0,y) = 1, \quad  y\in \mathbb{R}^d 
\end{cases}
\end{equation}
and  $f^T(t,x)=f(T-t,x)$. We will however stick to the notation $f^T$ which makes explicit the time horizon of the conditioning being considered.

\end{remark}

\begin{lemma}
\label{lemma:computation_fT}
Fix $T > 0$. Let the function $f^T\colon [0,T]\times \R^d \to \R_+$ be the unique solution of \eqref{eq:pde_fT}. Let $(L_t)_{t \in [0,T)}$ be the  $({\cal F}_t)_{t\in [0,T)}$-$\PP_{\nu}^T$-local martingale given by
\begin{equation}
\label{eq:def_L}
\forall t \in [0,T), \quad L_t \coloneq \int_0^t \nabla \ln f^T(s,X_s) \cdot \diff B_s.
\end{equation}
Then, $\PP_{\nu}^T$-a.s. $L_t$ and $[L]_t$  have (finite) limits  $L_T$ and $[L]_T$ as $t\nearrow T$, and we have
\begin{equation}
\label{eq:almost_doleans_dade}
f^T(t,X_t) = f^T(0,X_0) \exp\left(L_t - \frac12[L]_t\right) \exp\left(\int_0^t\rho(X_s)\diff s\right) \quad \forall \in [0,T].
\end{equation}
\end{lemma}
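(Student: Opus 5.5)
The plan is to apply Itô's formula to $\ln f^T(t,X_t)$ on $[0,T)$, where $f^T$ is a smooth positive solution of the backward problem \eqref{eq:pde_fT}. First, positivity. By Remark \ref{rk:value_fT(0,x)}, $f^T(t,y)=\EE[e^{-\int_t^T\rho(X_s)\diff s}\mid X_t=y]>0$, and $f^T\leq 1$. Moreover $f^T$ is continuous and strictly positive on $[0,T]\times\R^d$, so $\ln f^T$ is $C^{1,2}$ on $[0,T)\times\R^d$. Since $X$ has continuous paths on $[0,T]$ under Assumption \ref{ass:a2}, $f^T$ is bounded below by a positive random constant along $(t,X_t)_{t\in[0,T]}$.

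Next comes the Itô computation. Write $\LL^X = \frac12\Delta + b\cdot\nabla$. Itô's formula gives
\begin{equation*}
\diff \ln f^T(t,X_t) = \nabla\ln f^T(t,X_t)\cdot\diff B_t + \Big(\frac{\partial_t f^T + \LL^X f^T}{f^T} - \frac12\abs{\nabla \ln f^T}^2\Big)(t,X_t)\diff t .
\end{equation*}
The PDE \eqref{eq:pde_fT} replaces the first quotient by $\rho(X_t)$. Integrating from $0$ to $t<T$ and exponentiating yields \eqref{eq:almost_doleans_dade} for every $t\in[0,T)$, with $L$ defined by \eqref{eq:def_L}. This is a genuine continuous local martingale: its integrand is continuous in $s$ on $[0,T)$, so $\int_0^t\abs{\nabla\ln f^T(s,X_s)}^2\diff s<\infty$ for $t<T$.

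The remaining step, which is the main obstacle, is passing to $t\nearrow T$. Here $\nabla\ln f^T$ may not be continuous up to $t=T$, since Assumption \ref{ass:uniqueFK} only gives a classical solution on $[0,T)$ with the terminal condition. I would rearrange the identity as
\[L_t-\tfrac12[L]_t=\ln f^T(t,X_t)-\ln f^T(0,X_0)-\int_0^t\rho(X_s)\diff s .\]
By continuity of $f^T$ on $[0,T]\times\R^d$ and of the paths of $X$, the right-hand side converges a.s. as $t\nearrow T$ to the finite limit $-\ln f^T(0,X_0)-\int_0^T\rho(X_s)\diff s$, using $f^T(T,\cdot)=1$.

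To separate $L$ from $[L]$, I would use the standard dichotomy for continuous local martingales. On $\{[L]_{T-}<\infty\}$, $L_t$ converges. On $\{[L]_{T-}=\infty\}$, one has $L_t/[L]_t\to0$, so $L_t-\frac12[L]_t\to-\infty$. The latter contradicts the finite limit, so a.s. $[L]_{T-}<\infty$ and both limits $L_T$ and $[L]_T$ exist and are finite. Setting $L_T,[L]_T$ as these limits, \eqref{eq:almost_doleans_dade} extends to $t=T$ by letting $t\nearrow T$ on both sides.

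If instead the polynomial growth of derivatives in Assumption \ref{ass:uniqueFK} is meant to hold up to $t=T$, the argument is simpler: $\nabla\ln f^T$ is then bounded along the path, and the limits are immediate.
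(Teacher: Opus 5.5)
Your proof is correct. The core step is the same as the paper's: Itô's formula for $\ln f^T(t,X_t)$ on $[0,t]$, $t<T$, with \eqref{eq:pde_fT} turning the drift into $\rho(X_t)-\frac12\abs{\nabla\ln f^T(t,X_t)}^2$. You differ from the paper in how you justify the passage $t\nearrow T$.

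The paper first writes Itô's formula for $f^T(t,X_t)$ itself (without the logarithm). It then asserts that the limit of $L_t$ exists, because $f^T(t,X_t)$ and $\int_0^t\rho f^T\diff s$ converge. Finally it reads off $[L]_T<\infty$ from the logarithmic identity. Strictly, that first expansion yields convergence of $\int_0^t\nabla f^T(s,X_s)\cdot\diff B_s$, not of $L_t$. To close the argument one needs two further facts:
\begin{enumerate}
\item a convergent continuous local martingale has finite bracket;
\item the positive pathwise lower bound on $f^T$, to pass from $\int_0^T\abs{\nabla f^T}^2\diff s<\infty$ to $[L]_T=\int_0^T\abs{\nabla f^T}^2/(f^T)^2\diff s<\infty$.
\end{enumerate}

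You instead stay with the single logarithmic identity. The finite limit of $L_t-\frac12[L]_t$, together with $L_t/[L]_t\to0$ on $\{[L]_{T-}=\infty\}$ (Dambis--Dubins--Schwarz plus the strong law $W_u/u\to0$), excludes $[L]_{T-}=\infty$. Your route needs no transfer between the integrands $\nabla f^T$ and $\nabla\ln f^T$. At the limit step you only use continuity of $\ln f^T$ near $(T,X_T)$, where $f^T=1$, not the global lower bound. The paper's route, once completed, uses only the weaker fact that a local martingale with infinite bracket cannot converge, at the cost of that extra comparison step.
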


\begin{proof}
 Let $t \in [0,T)$. By Itô's formula and \eqref{eq:pde_fT}, we have
\begin{align*}
f^T(t,X_t) &= f^T(0,X_0) + \int_0^t(\partial_s + \LL_X)f^T(s,X_s)\diff s + \int_0^t \nabla f^T(s,X_s) \cdot \diff B_s\\
&= f^T(0,X_0) + \int_0^t \rho(X_s) f^T(s,X_s) \diff s + \int_0^t \nabla f^T(s,X_s) \cdot \diff B_s. 
\end{align*}
By continuity of $f^T$ on $[0,T]\times \R^d$  and local boundedness of $\rho$,  the limit $L_T$  of $L_t$ as $t\nearrow T$ a.s. exists. Moreover by \eqref{eq:def_fT} we have $f^T(t,y)>0$  on $[0,T]\times \R^d$. Therefore we can apply It\^o's formula (together with a localization argument to deal with the logarithm)  and get for all $t\in [0,T)$: 
\begin{align}
\label{eq:log_fT}
\ln f^T(t,X_t) &= \ln f^T(0,X_0) + \int_0^t\nabla \ln f^T(s,X_s) \cdot \diff B_s\\
&\quad \quad \quad \quad \quad \quad \quad \quad \quad + \int_0^t\left(\rho(X_s) - \frac12 \abs{\nabla \ln f^T(s,X_s)}^2\right)\diff s\\
&= \ln f^T(0,X_0) + L_t - \frac12[L]_t + \int_0^t\rho(X_s)\diff s.
\end{align}
 Thus, $[L]_T<+\infty$ and the proof is finished. 
\end{proof}

We can now write the martingale $(Z_t)_{t \in [0,T]}$ as a Doléans-Dade exponential in order to apply a suitable version of Girsanov's theorem, from which the proofs of Theorem \ref{thm:main_ii_0} and Corollary \ref{cor:valueentropy} will follow. 

\begin{theorem}
\label{thm:doleans-dade_girsanov}
The  process $M = (M_t)_{t \in [0,T]}$   defined by
\begin{equation}
\label{eq:martingale_M}
\forall t \in [0, T], \quad M_t \coloneq L_t - \widetilde{N}_t,
\end{equation}
where $L=(L_t)_{t \in [0,T]}$ is defined by \eqref{eq:def_L} and $\widetilde{N}_t \coloneq N_t - \int_0^t \rho(X_s)\diff s$ for $t \in [0,T]$, is a $(\bar{{\cal F}}_t)_{t\in [0,T)}$-$\PP_{\nu}^T$-local martingale. Moreover,  
$\PP^T_{\nu}$-a.s., 
\begin{equation}
\label{eq:z_doleans_dade}
\forall t \in [0,T], \quad Z_t = \frac{ \PP_{X_0}^T(N_T=0)}{\PP_{\nu}^T(N_T=0)}\mathcal{E}(M)_t 
\end{equation}
with
\begin{align*}
\mathcal{E}(M)_t \coloneq &   \exp\left(L_t - \frac12[L]_t\right) \exp\left(\int_0^t\rho(X_s)\diff s\right) \1_{\{N_t = 0\}} \\
= &  \exp(M_t - \frac12 [M]_t^c) \prod_{0<s \leq t} (1 + \Delta M_s) \exp(-\Delta M_s) 
\end{align*}
Consequently, if $M'$ is a
$(\bar{{\cal F}}_t)_{t\in [0,T]}$-$\PP_{\nu}^T$- local martingale, then the process $M'- [M',M]$ is a
$(\bar{\cal F}_t)_{t\in [0,T]}$-$\widetilde{\PP}_{\nu}^T$-local martingale.
\end{theorem}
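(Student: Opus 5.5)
We combine Lemma \ref{lemma:computation_Z}, Remark \ref{rk:value_fT(0,x)} and Lemma \ref{lemma:computation_fT}. First, $M$ is a local martingale. $L$ is an $(\mathcal F_t)$-local martingale, and it stays one in the larger filtration $(\bar{\mathcal F}_t)$, or even $(\mathcal G_t)$. The reason is that the enlargement comes from the Poisson measure $\mathcal N$ and $N_0$, which are independent of $B$ and $X_0$, so $B$ remains a Brownian motion. For the jump part, the compensator of $N^\lambda$ with respect to $(\mathcal G_t)$ is $\int_0^t\rho(X_s)\diff s$, because $\mathcal N$ is a Poisson measure with Lebesgue intensity and $\rho(X_s)$ is predictable. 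Stopping at $\tau$, the compensator of $N$ is $\int_0^{t\wedge\tau}\rho(X_s)\diff s$, and this is $\bar{\mathcal F}_t$-adapted. I would therefore use the stopped compensated process $N_t-\int_0^{t\wedge\tau}\rho(X_s)\diff s$. This is what $\widetilde N$ should mean, and it agrees with the stated $\widetilde N$ on $\{N_t=0\}=\{\tau>t\}$, which is the only place $\mathcal E(M)$ is nonzero. A martingale for $(\mathcal G_t)$ that is adapted to the smaller filtration $(\bar{\mathcal F}_t)$ is also a martingale for $(\bar{\mathcal F}_t)$, and localizing gives the claim.

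Second, identify $Z_t$. Lemma \ref{lemma:computation_Z} and \eqref{eq:def_fT} give $Z_t=\1_{\{N_t=0\}}f^T(t,X_t)/\PP^T_\nu(N_T=0)$. Substituting \eqref{eq:almost_doleans_dade} and $f^T(0,X_0)=\PP^T_{X_0}(N_T=0)$ from \eqref{eq:f0=PNT0} yields \eqref{eq:z_doleans_dade} with the first expression for $\mathcal E(M)$. At $t=T$ we use the limits $L_T$ and $[L]_T$ from Lemma \ref{lemma:computation_fT} together with $f^T(T,\cdot)=1$.

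Third, check that this agrees with the Doléans-Dade formula $\exp(M_t-\tfrac12[M]^c_t)\prod_{s\le t}(1+\Delta M_s)e^{-\Delta M_s}$. The continuous martingale part of $M$ is $L$, so $[M]^c=[L]$. The only possible jump is $\Delta M_\tau=-1$, so the product equals $\1_{\{N_t=0\}}$ up to the factor $e^{1}$, which cancels against $e^{-\Delta M}$. On $\{\tau>t\}$ we have $M_t=L_t+\int_0^t\rho(X_s)\diff s$, and the two expressions coincide. On $\{\tau\le t\}$ both vanish. In particular $Z$ is the stochastic exponential of $M$, rescaled by the $\bar{\mathcal F}_0$-measurable factor.

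Finally, apply Girsanov–Lenglart. The density process $Z$ of $\widetilde\PP^T_\nu$ with respect to $\PP^T_\nu$ satisfies $Z=Z_0\,\mathcal E(M)$, hence $\diff Z_t=Z_{t-}\diff M_t$. For a $\PP$-local martingale $M'$, Lenglart's version of Girsanov's theorem states that $M'-\int Z_{s-}^{-1}\diff[M',Z]_s$ is a $\widetilde\PP$-local martingale. This holds on $\{Z_->0\}$, a set of full $\widetilde\PP$-measure. Moreover $\int Z_{s-}^{-1}\diff[M',Z]_s=[M',M]$, which proves the last claim.

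The main obstacle is the behaviour near $t=T$ and the localization. $L$ is a priori a local martingale only on $[0,T)$, and $\ln f^T$ requires positivity and control of $\nabla\ln f^T$. I would localize with stopping times $T_n\nearrow T$ at which $|X|$ exceeds $n$ or $t\ge T-1/n$, and then pass to the limit using the a.s. limits from Lemma \ref{lemma:computation_fT}. Identifying $Z_T$ with the limit uses that $Z$ is a true, uniformly integrable martingale, since it is a conditional expectation of a bounded variable.
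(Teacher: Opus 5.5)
Your route is the paper's: you identify $Z$ through Lemmas~\ref{lemma:computation_Z} and \ref{lemma:computation_fT} and Remark~\ref{rk:value_fT(0,x)}, match the Dol\'eans-Dade product, and then apply Girsanov for an absolutely continuous change of measure. Your replacement of $\widetilde N$ by $N_t-\int_0^{t\wedge\tau}\rho(X_s)\diff s$ is a real correction, and one the paper's proof overlooks. With the literal $\widetilde N_t=N_t-\int_0^t\rho(X_s)\diff s$, $M$ differs from your local martingale by the continuous nondecreasing process $\int_{t\wedge\tau}^t\rho(X_s)\diff s$, so it is not a local martingale. The two versions of $M$ coincide on $[0,\tau]$ and differ by a continuous finite-variation process. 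Hence $\mathcal E(M)$ and $[M',M]$ are unchanged, and your identification of $Z$ stands.

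The genuine gap is in the last step. You quote Lenglart's theorem as saying that $M'-\int_0^\cdot Z_{s-}^{-1}\diff[M',Z]_s$ is a $\widetilde\PP^T_\nu$-local martingale. You then dispose of the degeneracy by noting that $Z_->0$ holds $\widetilde\PP^T_\nu$-a.s. But when $\widetilde\PP\ll\PP$ and $Z$ jumps to $0$ (here at $R=\tau$, with $Z_{\tau-}>0$), the theorem carries an extra term. One must add the $\PP^T_\nu$-compensator $\widetilde U$ of $U_t\coloneq\Delta M'_\tau\1_{\{t\geq\tau\}}$. This compensator is predictable and accrues before $\tau$, so in general it does not vanish on $\{\tau>T\}$.

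A concrete failure: take $M'=\widetilde N$, your stopped version. Then $[M',M]=-[\widetilde N]=-N$, so $M'-[M',M]=2N-\int_0^{\cdot\wedge\tau}\rho(X_s)\diff s$. Under $\widetilde\PP^T_\nu$ this equals $-\int_0^\cdot\rho(X_s)\diff s$, a continuous nonincreasing process. It is not a local martingale unless $\rho$ vanishes along the paths. The missing term $\widetilde U=\int_0^{\cdot\wedge\tau}\rho(X_s)\diff s$ is exactly what restores the martingale property.

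So the final claim holds only in two cases:
\begin{itemize}
\item for $M'$ with $\Delta M'_\tau=0$ a.s., in particular continuous $M'$, where $U=\widetilde U=0$;
\item in general, with $[M',M]$ replaced by the predictable bracket $\langle M',M\rangle$.
\end{itemize}
The paper asserts the same unqualified statement by citing the general Girsanov theorem, so this is a defect in the statement itself rather than a departure from the paper's proof. It is harmless for the only use made of it ($M'=B$ in Theorem~\ref{thm:main_ii_0}~(ii)), but your argument should be restricted accordingly.
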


\begin{proof}
The process $L$ being a   $({\cal F}_t)_{t\in [0,T)}$-$\PP_{\nu}^T$-local martingale 
independent of the Poisson measure ${\cal N}$, it is also a $({\cal G}_t)_{t\in [0,T)}$-$\PP_{\nu}^T$-local martingale. On the other hand,  with the notation in \eqref{eq:def_general_counting_process}, the equality
$\widetilde{N}_t = \int_0^t\int_{\R_+}  \1_{\{z \leq \rho(X_s), N^{\lambda}_s\leq 1\}}  \left[ \mathcal{N}(\diff s, \diff z)- \diff z \diff s \right] $ holds $\PP_{\nu}^T$-a.s. and we see that $\widetilde{N}$ is a compensated $({{\cal G}}_t)_{t\in [0,T]}$-$\widetilde{\PP}_{\nu}^T$- martingale. Moreover, $L$ and $\tilde{N}$ are adapted to $(\bar{{\cal F}}_t)_{t\in [0,T]}$, so the first claim follows.

Let now $t \in [0,T]$. By Remark \ref{rk:value_fT(0,x)}, we have $f^T(0,x) = \PP_x^T(N_T=0)$, so in light of  Lemmas \ref{lemma:computation_Z}  and   \ref{lemma:computation_fT}, the $(\bar{{\cal F}}_t)_{t\in [0,T]}$-martingale $(Z_t)_{t\in [0,T]}$ satisfies 
\begin{equation}
\label{eq:eq_aux_proof_exponential_1}
Z_t \frac{\PP_{\nu}^T(N_T=0)}{\PP_{X_0}^T(N_T=0)} = \1_{\{N_t = 0\}}\frac{f^T(t,X_t) }{f^T(0,X_0)} =  \exp\left(L_t - \frac12[L]_t\right) \exp\left(\int_0^t\rho(X_s)\diff s\right) \1_{\{N_t = 0\}}
\end{equation}
which provides formula \eqref{eq:z_doleans_dade}. We next obtain the  second asserted expression (in Doléans-Dade form) for $\mathcal{E}(M)_t$. Note that the second exponential in the last term  of \eqref{eq:eq_aux_proof_exponential_1} can be written as
\begin{equation}
\label{eq:eq_aux_proof_exponential_2}
\exp\left(\int_0^t\rho(X_s)\diff s\right) = \exp\left(-\widetilde{N}_t + N_t\right).
\end{equation}
Since $(N_t)_{t \in [0,T]}$ is a pure-jump process, then
\begin{equation}
\label{eq:eq_aux_proof_exponential_3}
[M]_t^c = [L]_t.
\end{equation}
We also note that
\begin{equation}
\label{eq:eq_aux_proof_exponential_4}
\exp(N_t) = \prod_{0< s\leq t} \exp(\Delta N_s) = \prod_{0< s\leq t} \exp(-\Delta M_s).
\end{equation}
and that, since $\PP_{\nu}$-a.s.  $ \1_{\{N_0=0\}}=1$,
\begin{equation}
\label{eq:eq_aux_proof_exponential_5}
\1_{\{N_t=0\}} = \prod_{0< s \leq t} \1_{\{\Delta N_s = 0\}} = \prod_{0< s \leq t} (1- \1_{\{\Delta N_s = 1\}}) = \prod_{0 < s \leq t}(1 + \Delta M_s).
\end{equation}
Hence, putting together \eqref{eq:eq_aux_proof_exponential_2}, \eqref{eq:eq_aux_proof_exponential_3}, \eqref{eq:eq_aux_proof_exponential_4}, and \eqref{eq:eq_aux_proof_exponential_5} into \eqref{eq:eq_aux_proof_exponential_1}, we obtain the desired  expression for \eqref{eq:z_doleans_dade}.  The last statement follows from Girsanov's theorem for right continuous semimartingales in the general (possibly only absolutely continuous) setting (see e.g. \cite[Chapter III, \S 8]{MR2273672}), which ensures that   $M'- [M',M]$ is a    
$(\bar{{\cal F}}_t)_{t\in [0,T]}$-$\widetilde{\PP}_{\nu}^T$-local martingale. 

\end{proof}

\begin{remark}
    Notice that under $\widetilde{\PP}_{\nu}^T$, the process $N$ is deterministic since it is identically null. Hence the completed sigma-fields ${\cal F}_t$ and $\bar{{\cal F}}_t$ coincide for each $t\in [0,T]$ in the probability space $(\Omega,{\cal F},\widetilde{\PP}_{\nu}^T)$. 
\end{remark}

\subsection{Proofs of Theorem \ref{thm:main_ii_0} and Corollary \ref{cor:valueentropy}}\label{subsec:proofs1.2y1.3}

\begin{proof}[Proof of Theorem \ref{thm:main_ii_0}]

Part (i) is immediate by taking $t=0$ in \eqref{eq:z_doleans_dade} and using \eqref{eq:f0=PNT0}. 
Part (ii) follows from Theorem  \ref{thm:doleans-dade_girsanov}, more precisely by applying its conclusion to the continuous martingale $M'=B$ and using Levy's characterization theorem for Brownian motion. 
Last, part (iii) comes from the identity
$$\HH(\widetilde{\PP}_{x}^{T-t}|\PP_{x}^{T-t})=  - \ln  f^{T-t}(0,x) = - \ln  f^T(t,x),  $$
which corresponds  to the 
particular case $\nu=\delta_x$ in Corollary \ref{cor:valueentropy} (see Remark \ref{rk:HPPlnf} below) and the fact that $- \ln  f^{T-t}(0,x) = - \ln  f^T(t,x) $ as noted in  Remark \ref{rk:value_fT(0,x)}.
\end{proof}

\begin{proof}[Proof of Corollary \ref{cor:valueentropy}]

First, note that thanks to \eqref{eq:eq_aux_proof_exponential_1} with $t=T$, we get
\begin{align*}
\1_{\{N_T=0\}} \ln Z_T &= \1_{\{N_T=0\}} \left(L_T - \frac12[L]_T + \int_0^T\rho(X_s)\diff s\right)\\
& \quad \quad + \1_{\{N_T=0\}} \left( \ln \PP_{X_0}^T(N_T=0)- \ln {\PP_{\nu}^T(N_T=0)} \right)\\
&= \1_{\{N_T=0\}} \left(L_T - [L]_T + \frac12[L]_T + \int_0^T\rho(X_s)\diff s \right)  \\
& \quad \quad + \1_{\{N_T=0\}} \left( \ln \PP_{X_0}^T(N_T=0)- \ln {\PP_{\nu}^T(N_T=0)} \right). 
\end{align*}
By Theorem \ref{thm:doleans-dade_girsanov}, $L - [L]$ is a local $\widetilde{\PP}_{\nu}^T$-martingale, so after a simple localization argument, and since  $\widetilde{\PP}_{\nu}^T (N_T=0)=1$,  we get
\begin{align*}
\HH(\widetilde{\PP}_{\nu}^T|\PP_{\nu}^T) -  \widetilde{\EE}_{\nu}^T \left[\ln\left( \frac{ \PP_{X_0}^T(N_T=0)}{\PP_{\nu}^T(N_T=0)}\right) \right]  & = \widetilde{\EE}_{\nu}^T [\ln Z_T]-  \widetilde{\EE}_{\nu}^T \left[\ln\left( \frac{ \PP_{X_0}^T(N_T=0)}{\PP_{\nu}^T(N_T=0)}\right) \right]  \\
&= \widetilde{\EE}_{\nu}^T\left[\frac12[L]_T + \int_0^T\rho(X_s)\diff s\right]\\
&= \widetilde{\EE}_{\nu}^T\left[\int_0^T \frac12\abs{\nabla \ln f^T(s,X_s)}^2\diff s + \int_0^T\rho(X_s)\diff s\right].
\end{align*}
This shows the first asserted equality. On the other hand, by Lemma \ref{lemma:computation_Z} and the identity \eqref{eq:def_fT}, and since $f^T(T,y)=1$,
we have 
\begin{align*} \1_{\{N_T=0\}} \ln Z_T & =  \1_{\{N_T=0\}} ( \ln f^T(T,X_T) - \ln\PP_{\nu}^T(N_T=0) ) \\
&= - \1_{\{N_T=0\}}  \ln \PP_{\nu}^T(N_T=0)
\end{align*}
Taking expectation under $\widetilde{\PP}_{\nu}^T$, the second identity follows.   
\end{proof}

\begin{remark}\label{rk:HPPlnf}
Taking $\nu=\delta_x$  in Corollary \ref{cor:valueentropy}, and noting that $(\delta_x)^T=\delta_x $ (in the notation of \eqref{eq:mu0T}), we get that, for all $x\in \R^d$,
$$\HH(\widetilde{\PP}_{x}^T|\PP_{x}^T)  = \widetilde{\EE}_{x}^T \left[\int_0^T \frac12\abs{\nabla \ln f^T(s,X_s)}^2\diff s + \int_0^T\rho(X_s)\diff s \right] = - \ln  f^T(0,x).$$
\end{remark}

\begin{corollary}
\label{cor:rel_entropy_Q_PTT'}
For every $T\geq t>0$, 
\begin{align*}
\HH(\widetilde{\PP}_{\nu}^{t,T}|\PP_{\nu}^{t}) & = \widetilde{\EE}_{\nu}^{t,T} \left[\int_0^t \frac12\abs{\nabla \ln f^{T}(s,X_s)}^2\diff s + \int_0^t\rho(X_s)\diff s\right] +  \HH(\nu^{T}| \nu)
\\  & =\widetilde{\EE}_{\nu}^{t,T} ( \ln f^T(t,X_t))  -  \ln \PP_{\nu}^T(N_{T}=0).
\end{align*}
\end{corollary}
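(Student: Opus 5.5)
Write $\widetilde{\PP}_{\nu}^{t,T}$ for the law of $(X,N)_{[0,t]}$ under $\widetilde{\PP}_{\nu}^T$. The plan is to compute its density with respect to $\PP_{\nu}^t$ explicitly, and then evaluate the expected logarithm of this density in two ways, exactly as in the proof of Corollary \ref{cor:valueentropy} but stopped at time $t$.

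\textbf{Step 1: the density.} Since $\PP_\nu^t$ is the restriction of $\PP_\nu^T$ to $\bar{\mathcal F}_t$, the density of the restriction is the martingale $Z_t=\EE_\nu^T[Z_T|\bar{\mathcal F}_t]$. By Lemma \ref{lemma:computation_Z} and \eqref{eq:def_fT},
\[
Z_t=\frac{\1_{\{N_t=0\}}\,f^T(t,X_t)}{\PP_\nu^T(N_T=0)} .
\]
Hence $\HH(\widetilde{\PP}_{\nu}^{t,T}|\PP_\nu^t)=\widetilde{\EE}_\nu^{t,T}[\ln Z_t]$. Because $N_t=0$ holds $\widetilde{\PP}_\nu^T$-a.s., the second asserted identity follows immediately. (Note that $\ln f^T\le 0$, so the expectation is well defined in $[-\infty,0]$.)

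\textbf{Step 2: the first identity.} Use \eqref{eq:eq_aux_proof_exponential_1} at time $t$, which gives, on $\{N_t=0\}$,
\[
\ln Z_t=\big(L_t-[L]_t\big)+\tfrac12[L]_t+\int_0^t\rho(X_s)\diff s+\ln\frac{f^T(0,X_0)}{\PP_\nu^T(N_T=0)} .
\]
By Theorem \ref{thm:doleans-dade_girsanov}, applied with $M'=L$, the process $L-[L]$ is a $\widetilde{\PP}_\nu^T$-local martingale. The last term is handled using part (i) of Theorem \ref{thm:main_ii_0}: $X_0\sim\nu^T$ under $\widetilde{\PP}_\nu^T$, and $\int f^T(0,y)\diff\nu(y)=\PP_\nu^T(N_T=0)$ by \eqref{eq:f0=PNT0}. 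Hence the last term is exactly $\ln\frac{\diff\nu^T}{\diff\nu}(X_0)$, and its $\widetilde{\EE}$-expectation is $\HH(\nu^T|\nu)$. Taking expectations then yields the first line of the statement, with $[L]_t=\int_0^t|\nabla\ln f^T(s,X_s)|^2\diff s$.

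\textbf{Main obstacle: localization.} The delicate point is justifying $\widetilde{\EE}[L_t-[L]_t]=0$. I would localize with stopping times $\sigma_n$ reducing $L-[L]$ (for instance exit times of $X$ from balls, together with $[L]\le n$). This gives
\[
\widetilde{\EE}[\ln Z_{t\wedge\sigma_n}]=\widetilde{\EE}\Big[\tfrac12[L]_{t\wedge\sigma_n}+\int_0^{t\wedge\sigma_n}\rho\Big]+\HH(\nu^T|\nu).
\]
Then let $n\to\infty$ on each side.
\begin{itemize}
\item On the right, the integrands are nonnegative and increase, so monotone convergence applies.
\item On the left, $\widetilde{\EE}[\ln Z_{t\wedge\sigma_n}]$ is the relative entropy of $\widetilde{\PP}_\nu^T$ with respect to $\PP_\nu^T$ on the stopped $\sigma$-field $\bar{\mathcal F}_{t\wedge\sigma_n}$. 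These $\sigma$-fields increase to $\bar{\mathcal F}_t$, because $\sigma_n\to\infty$ by nonexplosion and continuity of $f^T$. Relative entropy is nondecreasing along increasing filtrations and converges to the entropy on the limit $\sigma$-field, by lower semicontinuity combined with monotonicity.
\end{itemize}
Both sides therefore converge to the desired quantities, possibly both equal to $+\infty$, which completes the argument.
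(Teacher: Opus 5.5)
Your proposal is correct and follows essentially the same route as the paper. The second identity comes from the density $Z_t$ of Lemma \ref{lemma:computation_Z}. The first comes from \eqref{eq:eq_aux_proof_exponential_1} together with the $\widetilde{\PP}_\nu^T$-local martingale $L-[L]$ and $X_0\sim\nu^T$, which identifies the boundary term as $\HH(\nu^T|\nu)$.

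Your explicit localization spells out what the paper leaves as a ``simple localization argument''. One simplification: since $f^T\le 1$, the entropy is bounded by $-\ln\PP_\nu^T(N_T=0)<\infty$, so the ``both equal to $+\infty$'' case never occurs.
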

\begin{proof}
We proceed as in the proof of Corollary \ref{cor:valueentropy}. For the first formula, we start  from 
\eqref{eq:eq_aux_proof_exponential_1} with general $t\leq T$ to similarly obtain
\begin{equation*}
\HH(\widetilde{\PP}_{\nu}^{t,T}|\PP_{\nu}^t) -  \widetilde{\EE}_{\nu}^T \left[\ln\left( \frac{ \PP_{X_0}^T(N_T=0)}{\PP_{\nu}^T(N_T=0)}\right) \right] = \widetilde{\EE}_{\nu}^{t,T}\left[\int_0^t \frac12\abs{\nabla \ln f^T(s,X_s)}^2\diff s + \int_0^t\rho(X_s)\diff s\right],
\end{equation*}
 which is equivalent to the claimed equality. For the second formula, again using Lemma \ref{lemma:computation_Z} and identity \eqref{eq:def_fT}, we obtain
\begin{equation*} \1_{\{N_t=0\}} \ln Z_t  =  \1_{\{N_t=0\}} ( \ln f^T(t,X_t) - \ln\PP_{\nu}^T(N_T=0) )
\end{equation*}
and conclude taking expectation under $\widetilde{\PP}_{\nu}^{T}$.
\end{proof}

\subsection{Proof of Theorem \ref{thm:contraction_dynamics_intro}}
\label{sec:weak_convexity_Eberle}

The proof of Theorem \ref{thm:contraction_dynamics_intro} will be based on Eberle's approach to the exponentially fast convergence of diffusions under asymptotic convexity of the drift \cite{MR2843007,eberle16reflection}, and on the propagation of this property along the Hamilton-Jacobi flow, recently proven by Chaintron, Conforti, and Eichinger \cite{chaintron25propagation}. We start by recalling the notion of weak convexity profile,  leveraged in \cite{MR2843007} from the context of reflection couplings \cite{lindvall86coupling}.

\begin{definition}
\label{def:weak_convexity_profile}
Let $V \colon \R^d \to \R$ be a $C^2$ function. The \textit{weak convexity profile} of $V$ is the function $\kappa_V \colon \R_{>0} \to \R$ defined by
\[\forall r > 0, \quad \kappa_V(r) \coloneq \inf\left\{\frac{\langle \nabla V(x) - \nabla V(y), x-y \rangle}{\abs{x-y}^2} : \abs{x-y} = r\right\}.\]
\end{definition}

The weak convexity profile $\kappa_V$ captures the \textit{isotropic integrated convexity} of the potential $V\colon \R^d \to \R$ in the following sense: for $a \in \R$ and $r > 0$, one has
\[\kappa_V(r) \geq a \iff \left\{\begin{array}{c}
    \displaystyle \int_0^1 (y-x)^\intercal \nabla^2U(tx+(1-t)y) (y-x) \diff t \geq a \abs{y-x}\\
    \displaystyle \forall x,y \in \R^d \text{ with } \abs{y-x}=r.
\end{array}\right.\]
In particular, $V$ is convex if and only if $\kappa_V(r) \geq 0$ for all $r > 0$ and $V$ is $a$-strongly convex for some $a > 0$ if and only if $\kappa_V(r) \geq a$ for all $r > 0$. We next introduce the notion of strict asymptotic convexity, which generalizes the usual notion of strong convexity.

\begin{definition}
\label{def:strict_convex}
Let $V \colon \R^d \to \R$ be a $C^2$ function. We say that $V$ is \textit{strictly asymptotically convex} if $\liminf_{r \to +\infty} \kappa_V(r) > 0$.
\end{definition}

\begin{remark}
The class of strictly asymptotically convex functions is rich: let $V, V_1, V_2 \colon \R^d \to \R$ be $C^1$ functions.
\begin{enumerate}
    \item Obviously, if $\kappa_V \geq \bar{\kappa}$ on $\R_{>0}$ with $\liminf_{r \to +\infty} \bar \kappa > 0$, then $\kappa_V$ is strictly asymptotically convex.
    
    \item If $V$ is $c$-strongly convex for some $c > 0$, then $V$ is strictly asymptotically convex.
    
    \item A remarkable example beyond strong convexity is the following: if $V$ is regular inside a compact set $A \subset \R^d$ and strictly convex outside $A$, then $V$ is strictly asymptotically convex.

    \item If both $V_1$ and $V_1$ are strictly asymptotically convex, then $V_1+V_2$ is strictly asymptotically convex.
    
    \item If $V$ is strictly asymptotically convex and $h\colon \R^d \to \R$ is Lipschitz, then $V+h$ is strictly asymptotically convex.
\end{enumerate}
\end{remark}

The interest of strictly asymptotically convex potentials is that they possess nice contractive properties for diffusion procesees, as shown in Theorem \ref{thm:eberle} below. Before we unveil the theorem, we need the following preliminary statement, which gathers several results proven in \cite{eberle16reflection}. Its message is that to every strictly asymptotically convex profile we may associate a natural Wasserstein-type distance that is equivalent to the usual $1$-Wasserstein distance.

\begin{proposition}
\label{prop:properties_f_kappa}
Let $\kappa\colon \R_{>0} \to \R$ be a continuous function with $\liminf_{r \to +\infty} \kappa(r) > 0$ and $\int_0^1 r (\kappa(r))^- \diff r < +\infty$, where $(\cdot)^-$ denotes the negative part of a real number. Define $f_\kappa \colon \R_+ \to \R_+$ by
\[\forall r \geq 0, \quad f_\kappa(r) \coloneq \int_0^r \phi_\kappa(s) g_\kappa(s) \diff s,\]
where $\phi_\kappa(r) \coloneq \exp\left(-\frac14 \int_0^r s (\kappa(s))^- \diff s\right)$, $\Phi_\kappa(r) \coloneq \int_0^r \phi_\kappa(s) \diff s$, $R_0 \coloneq \inf\{R\geq 0 : \inf_{r \geq R} \geq 0\}$, $R_1 \coloneq \inf\{R \geq 0 : \kappa(r) \geq 0 \ \forall r \geq R\}$, and $g_\kappa(r) \coloneq 1-\left(\frac12\int_0^{r \wedge R_1} \frac{\Phi_\kappa(s)}{\phi_\kappa(s)}\diff s\right) / \left(\int_0^{R_1}\frac{\Phi_\kappa(s)}{\phi_\kappa(s)}\diff s \right)$.

Then:
\begin{enumerate}
    \item The function $f_\kappa$ is differentiable, concave, and for the constant $C_\kappa \coloneq \phi_\kappa(R_0)/2 \in (0,1)$, we have
\[\forall r \geq 0, \quad C_\kappa r \leq f_\kappa(r) \leq r.\]

    \item \label{item:properties_f_kappa_ii} For $\eta, \eta' \in  \mathcal{P}_1(\R^d)$, we set
\[\W_{f_\kappa}(\eta, \eta') \coloneq \inf_{\pi \in \Pi(\eta, \eta')} \int_{\R^d \times \R^d} f_\kappa(\abs{x-y}) \diff \pi(x,y),\]
where $\Pi(\eta, \eta') \coloneq \{\pi \in \mathcal{P}(\R^d \times \R^d) : \proj_1(\pi) = \eta \text{ and } \proj_2(\pi) = \eta'\}$. Then
\[C_\kappa \W_1(\eta, \eta') \leq \W_{f_\kappa}(\eta, \eta') \leq \W_1(\eta, \eta')\]
where $\W_1$ denotes the standard $1$-Wasserstein distance:
\[\W_{1}(\eta, \eta') \coloneq \inf_{\pi \in \Pi(\eta, \eta')} \int_{\R^d \times \R^d} \abs{x-y} \diff \pi(x,y).\]
\end{enumerate}
\end{proposition}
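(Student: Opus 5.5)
The plan is to work directly with the derivative $f_\kappa' = \phi_\kappa g_\kappa$. Every claim in (i) should follow from elementary monotonicity and positivity properties of the two factors. Part (ii) is then obtained by integrating the pointwise two-sided bound against couplings.

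\textbf{Step 1: finiteness of the thresholds and well-posedness.} Since $\liminf_{r\to+\infty}\kappa(r)>0$, there is $R$ with $\kappa\geq 0$ on $[R,+\infty)$. Hence $R_0$ and $R_1$ (both read as the smallest $R$ beyond which $\kappa\geq 0$) are finite.

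The exponent $\int_0^r s(\kappa(s))^-\diff s$ is finite for every $r$. Near $0$ this is the integrability assumption $\int_0^1 r(\kappa(r))^-\diff r<+\infty$. On $[1,r]$ it follows from continuity of $\kappa$. Moreover the integrand vanishes beyond $R_0$, since $\kappa^-=0$ there.

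Consequently $\phi_\kappa$ has the following properties:
\begin{itemize}
\item it is continuous and positive;
\item it is nonincreasing, with $\phi_\kappa\leq 1$;
\item it is constant, equal to $\phi_\kappa(R_0)\in(0,1]$, on $[R_0,+\infty)$.
\end{itemize}
Next, $\Phi_\kappa(s)/\phi_\kappa(s)\leq s/\phi_\kappa(R_1)$. This ratio is therefore locally integrable, and the normalising integral $\int_0^{R_1}\Phi_\kappa/\phi_\kappa$ is finite and positive (assuming $R_1>0$; if $R_1=0$, I would simply set $g_\kappa\equiv 1$ or $1/2$, a trivial case). It follows that $g_\kappa$ has the following properties:
\begin{itemize}
\item it is continuous and nonincreasing;
\item it equals $1$ at $0$;
\item it decreases to $1/2$ at $R_1$ and stays equal to $1/2$ afterwards.
\end{itemize}
In particular $1/2\leq g_\kappa\leq 1$.

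\textbf{Step 2: differentiability and concavity.} Since $\phi_\kappa g_\kappa$ is continuous, $f_\kappa$ is $C^1$ with $f_\kappa'=\phi_\kappa g_\kappa$. This derivative is a product of two positive nonincreasing functions, hence it is nonincreasing, so $f_\kappa$ is concave.

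\textbf{Step 3: the two-sided bound.} From $\phi_\kappa\leq 1$ and $g_\kappa\leq 1$ we get $f_\kappa'\leq 1$, which gives $f_\kappa(r)\leq r$.

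For the lower bound, $f_\kappa'(s)\geq \tfrac12\phi_\kappa(s)\geq \tfrac12\phi_\kappa(R_0)=C_\kappa$ for every $s\geq0$. This holds because $\phi_\kappa$ is nonincreasing and constant past $R_0$, so its infimum over $\R_+$ is $\phi_\kappa(R_0)$. Integrating gives $C_\kappa r\leq f_\kappa(r)$. Finally, $\phi_\kappa(R_0)\in(0,1]$ yields $C_\kappa\in(0,1/2]\subset(0,1)$.

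\textbf{Step 4: part (ii).} Fix any $\pi\in\Pi(\eta,\eta')$. Integrating the pointwise inequality $C_\kappa\abs{x-y}\leq f_\kappa(\abs{x-y})\leq \abs{x-y}$ against $\pi$ is legitimate since $\eta,\eta'\in\mathcal P_1$. Taking the infimum over $\pi$ on each side preserves the inequalities, which gives $C_\kappa\W_1\leq \W_{f_\kappa}\leq \W_1$.

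The only genuinely delicate point is Step 1. One must check that the integrability hypothesis at $0$ is exactly what keeps $\phi_\kappa$ bounded away from zero. One must also check that $\kappa^-$ vanishing past $R_0$ makes $\phi_\kappa(R_0)$ the global infimum of $\phi_\kappa$, which is what produces a uniform lower bound on $f_\kappa'$. The remaining steps are routine.
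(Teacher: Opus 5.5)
Your proof is correct. The paper gives no argument of its own here; it simply collects these facts from Eberle's reflection-coupling paper, and your derivation is the elementary reasoning behind that citation: $\phi_\kappa$ is nonincreasing and constant on $[R_0,+\infty)$, and $\tfrac12\le g_\kappa\le 1$, so $f_\kappa'=\phi_\kappa g_\kappa$ is nonincreasing with $\phi_\kappa(R_0)/2\le f_\kappa'\le 1$, and (ii) follows by integrating against an arbitrary coupling.

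One point in your favour: your argument never uses the paper's apparent identification of $R_1$ with $R_0$, which looks like a typo. Only $0<R_1<+\infty$ enters, so the argument also covers Eberle's original, larger threshold $R_1$.
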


\begin{remark}
The technical condition $\int_0^1 r (\kappa(r))^- \diff r < +\infty$ in Proposition \ref{prop:properties_f_kappa} controls how $\kappa$ diverges to $-\infty$ as $r \to 0$. For example, when $\kappa$ is uniformly lower-bounded in a neighborhood of $0$, this condition is verified.
\end{remark}

Now can now  state the main result in \cite{eberle16reflection}. In short, it states that if the convexity profiles of a family of potentials $(V_t)_{t \geq0}$ is minored by a fixed profile $\bar{\kappa}$ not depending on $t\geq 0$ that is strictly positive at infinity, then the law of the solution of the SDE
\begin{equation}
\label{eq:eq_thm_eberle_1_aux}
\diff Y_t = \diff B_t - \nabla V_t(Y_t) \diff t
\end{equation} 
 contracts exponentially fast in the $\W_{f_{\bar{\kappa}}}$ distance from Proposition \ref{prop:properties_f_kappa}. Its proof combines the behavior outside some compact set of the potentials $(V_t)_{t \geq0}$, described by the weak convexity profile $\bar{\kappa}$, with the use of a reflection coupling inside that set. We remark that in the original work \cite{eberle16reflection}, the theorem is proven for a process like \eqref{eq:eq_thm_eberle_1_aux} with a drift that does not depend on the time parameter $t \geq 0$; however, its extension to the time-dependent case is straightforward (see \cite{MR4674060}).

\begin{theorem}[Theorem 1 in \cite{eberle16reflection}]
\label{thm:eberle}
Let $V \colon [0,+\infty)\times \R^d \to \R$, $(t,x)\mapsto V_t(x) $   be a  continuous function with $V_t(\cdot)$ of class $C^2$ for all $t\geq 0$.  Assume a solution $(Y_t)_{t \geq 0}$ to the  SDE
\eqref{eq:eq_thm_eberle_1_aux} exists, 
 and  moreover that there is  $\bar{\kappa} \colon \R_{>0} \to \R$ such that $\int_0^1 r (\bar{\kappa}(r))^- \diff r < +\infty$ and $\kappa_{V_t} \geq \bar{\kappa}$ on $\R_{>0}$ for every $t \geq 0$. Let $\eta, \eta' \in \mathcal{P}(\R^d)$ have a finite first moment and for every $t \geq 0$ define $\eta_t$ (resp. $\eta'_t$)  as the law at time $t$ of the process \eqref{eq:eq_thm_eberle_1_aux} with initial condition $Y_0 \sim \eta$ (resp. $Y_0 \sim \eta'$). Then we have 
\begin{equation}
\label{eq:eq_thm_eberle_2}
\forall t \geq 0, \quad \W_{f_{\bar{\kappa}}}(\eta_t, \eta_t') \leq e^{-c_{\bar{\kappa}} t} \W_{f_{\bar{\kappa}}}(\eta, \eta')
\end{equation}
with $c_{\bar{\kappa}} \coloneq 2/ \left(\int_0^{R_1} \Phi_{\bar{\kappa}}(s)/\phi_{\bar{\kappa}}(s) \diff s\right) > 0$ and  $R_1, \Phi_{\bar{\kappa}}$, and $\phi_{\bar{\kappa}}$ defined in Proposition \ref{prop:properties_f_kappa} 
\end{theorem}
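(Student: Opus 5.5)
The statement to prove is Eberle's Theorem 1, extended to time-dependent drifts. The plan is the standard reflection-coupling argument applied to the concave function $f=f_{\bar\kappa}$ of Proposition \ref{prop:properties_f_kappa}.

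\textbf{Step 1: the coupling.} Fix an optimal coupling $(Y_0,Y_0')$ of $\eta,\eta'$ for $\W_{f}$. Construct $(Y_t,Y_t')$ by a reflection coupling driven by the same Brownian motion $B$:
\[
\diff Y_t=\diff B_t-\nabla V_t(Y_t)\diff t,\qquad \diff Y'_t=(I-2e_te_t^\intercal)\diff B_t-\nabla V_t(Y'_t)\diff t,
\]
where $e_t=(Y_t-Y'_t)/|Y_t-Y'_t|$. This runs up to the coupling time $T_c$, after which we set $Y'=Y$. By Lévy's characterization both marginals solve \eqref{eq:eq_thm_eberle_1_aux}. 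Existence of the coupled SDE follows, as in \cite{eberle16reflection}, by approximating with mixed reflection/synchronous couplings and passing to the limit.

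\textbf{Step 2: dynamics of the distance.} Set $r_t=|Y_t-Y'_t|$. For $t<T_c$, Itô's formula gives
\[
\diff r_t=-\langle e_t,\nabla V_t(Y_t)-\nabla V_t(Y'_t)\rangle\diff t+2\diff W_t,
\]
with $W$ a one-dimensional Brownian motion. The drift is bounded above by $-r_t\bar\kappa(r_t)$, by definition of $\kappa_{V_t}\ge\bar\kappa$ and the uniformity in $t$. Since $f$ is concave and $C^1$ with absolutely continuous derivative, the Itô–Tanaka formula yields
\[
\diff f(r_t)\le\bigl(2f''(r_t)-r_t\bar\kappa(r_t)f'(r_t)\bigr)\diff t+2f'(r_t)\diff W_t .
\]

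\textbf{Step 3: the key differential inequality.} The core is to verify
\[
2f''(r)-r\bar\kappa(r)f'(r)\le -c_{\bar\kappa}f(r)\quad\text{for all } r>0,\ r\neq R_1 .
\]
Write $f'=\phi g$ with $\phi'=-\tfrac14 r\bar\kappa^-\phi$. Then
\[
2f''=-\tfrac12 r\bar\kappa^- f'+2\phi g' .
\]
For $r<R_1$ we have $-r\bar\kappa f'\le r\bar\kappa^-f'$, and the $\bar\kappa^-$ terms combine to $\tfrac12 r\bar\kappa^- f'$. This is nonpositive-compensated only if the coefficient in $\phi$ is chosen suitably, so I would follow Eberle exactly and take $\phi=\exp(-\tfrac12\int s\bar\kappa^-)$, adjusting the constants if the paper's factor $1/4$ corresponds to a different diffusion normalization. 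The term $2\phi g'=-\Phi/(\int_0^{R_1}\Phi/\phi)$ then gives $\le -c\Phi\le -cf$, using $f\le\Phi$.

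For $r\ge R_1$, note that $g$ is constant and equal to $1/2$, and $\bar\kappa\ge0$. The bound then follows from the choice of $R_1$ together with $\liminf\bar\kappa>0$, handled by possibly enlarging $R_1$ to $R_0$-type thresholds as in \cite{eberle16reflection}. This bookkeeping of constants, and checking that the paper's normalization reproduces $c_{\bar\kappa}$, is the main obstacle; I would redo Eberle's computation with the diffusion coefficient $2$ for $r_t$.

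\textbf{Step 4: conclusion.} Localize the martingale term and take expectations to get
\[
\frac{\diff}{\diff t}\EE[e^{c t}f(r_t)]\le 0 .
\]
Note that $r_t=0$ after $T_c$, so the bound persists past coupling. Hence
\[
\W_f(\eta_t,\eta'_t)\le\EE f(r_t)\le e^{-ct}\EE f(r_0)=e^{-ct}\W_f(\eta,\eta').
\]
Time dependence of $V_t$ enters only through the uniform bound $\kappa_{V_t}\ge\bar\kappa$, so it causes no extra difficulty.
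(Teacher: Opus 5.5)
Your route is the one the paper relies on. The paper gives no proof of its own: it cites Eberle's reflection-coupling argument and remarks that the time-dependent extension is straightforward. Steps 1, 2 and 4 are fine. Your observation that $\phi$ must carry the factor $\tfrac12$ rather than $\tfrac14$ for the unit-noise equation \eqref{eq:eq_thm_eberle_1_aux} is also correct.

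The genuine gap is the region $r\ge R_1$ in Step 3. This is exactly where strict asymptotic convexity must enter, and you leave it to ``the choice of $R_1$''. With $R_1$ as written in Proposition \ref{prop:properties_f_kappa}, it is merely the point beyond which $\bar\kappa\ge0$ (as written it coincides with $R_0$). The inequality you need is then false. On $(R_1,\infty)$ one has $g\equiv\tfrac12$ and $\phi\equiv\phi(R_0)$, hence $f''=0$. The drift of $f(r_t)$ is therefore only bounded by $-\tfrac12\phi(R_0)\,r\bar\kappa(r)$, which vanishes wherever $\bar\kappa$ does; a continuous $\bar\kappa$ vanishing on $[R_0,R_0+1]$ is compatible with $\liminf\bar\kappa>0$. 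Even where $\bar\kappa>0$, nothing ties its size to $c=1/I$, with $I\coloneq\int_0^{R_1}\Phi/\phi$.

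The missing idea is Eberle's threshold
$R_1\coloneq\inf\{R\ge R_0:\ \bar\kappa(r)R(R-R_0)\ge4\ \forall r\ge R\}$.
It is finite since $\liminf\bar\kappa>0$; Eberle writes $8$ because his profile is $2\kappa_V$ for the drift $-\nabla V$. Combine it with two uses of the concavity of $\Phi$:
\begin{itemize}
\item $\Phi(r)\le r\Phi(R_1)/R_1$ for $r\ge R_1$;
\item since $\phi\equiv\phi(R_0)$ on $[R_0,\infty)$, $I\ge\phi(R_0)^{-1}\int_{R_0}^{R_1}\Phi\ge(R_1-R_0)\Phi(R_1)/(2\phi(R_0))$.
\end{itemize}
Together these give, for $r\ge R_1$,
\[
\tfrac12\phi(R_0)\,r\bar\kappa(r)\;\ge\;\frac{2r\,\phi(R_0)}{R_1(R_1-R_0)}\;\ge\;\frac{\Phi(r)}{I}\;\ge\;\frac{f(r)}{I},
\]
which is the required bound. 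The trade-off here is the actual content, not bookkeeping: enlarging $R_1$ weakens the required lower bound on $\bar\kappa$ but increases $I$.

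Second, your computation on $(0,R_1)$ gives $2\phi g'=-\Phi/I$, hence the rate $1/I$, not the stated $c_{\bar\kappa}=2/I$. The argument cannot do better. Since $g>\tfrac12$ on $[0,R_1)$, one has $\Phi<2f$ there. Wherever $\bar\kappa\le0$ your drift bound equals $-\Phi/I$, which is not $\le-2f/I$.

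The statement's constants ($\tfrac14$ in $\phi$, threshold $8$, rate $2/I$) are exactly what your computation gives for $\diff Y_t=\sqrt2\,\diff B_t-\nabla V_t(Y_t)\diff t$. There $\diff f(r_t)\le(4f''-r\bar\kappa f')(r_t)\diff t+2\sqrt2\,f'(r_t)\diff W_t$ and $4\phi g'=-2\Phi/I$. That is the normalization of the HJB literature the paper draws on, not that of \eqref{eq:eq_thm_eberle_1_aux}.

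You should therefore do one of the following:
\begin{itemize}
\item state and prove the unit-noise version, with $\phi(r)=\exp\bigl(-\tfrac12\int_0^r s\,\bar\kappa(s)^-\diff s\bigr)$, threshold $4$, and rate $1/I$;
\item or reduce to the $\sqrt2$ case via $\hat Y_s\coloneq Y_{2s}$, which solves that equation with potential $2V_{2s}$ and profile $2\bar\kappa$, and yields the same result.
\end{itemize}
Either way the constants differ from the literal statement. This is harmless for Theorem \ref{thm:contraction_dynamics_intro}, which only uses the existence of some $c^*,C^*>0$.
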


\begin{remark}
\label{rk:contr_w_1}
In light of point \ref{item:properties_f_kappa_ii} in Proposition \ref{prop:properties_f_kappa}, the estimate \eqref{eq:eq_thm_eberle_2} yields the following contraction in $\W_1$:
\[\forall t \geq 0, \quad \W_1(\eta_t, \eta_t') \leq C_{\bar{\kappa}} e^{-c_{\bar{\kappa}} t} \W_1(\eta,\eta').\]
\end{remark}

A second key ingredient in the proof of  Theorem \ref{thm:contraction_dynamics_intro} will be the results in \cite{chaintron25propagation} of propagation of asymptotic convexity along the Hamilton-Jacobi flow,  recorded in a way that is adapted to our setting in the following proposition.

\begin{proposition}
\label{prop:conforti}
Let Assumptions \ref{ass:langevin}, \ref{ass:ergodicity}, and \ref{ass:effective_potential} hold.
\begin{enumerate}
    \item There exists $\bar{\kappa} \colon \R_+ \to \R$ with $\liminf_{r \to +\infty} \bar\kappa(r) > 0$ and $\int_0^1 r (\bar \kappa(r))^-\diff r < +\infty$, independent of $T$, such that
    \[\forall t \in [0,T], \forall r > 0, \quad \kappa_{U - \ln f^T(t, \cdot)}(r) \geq \bar \kappa(r).\]
    \item Assume additionally that $U_{\mathrm{eff}} + \rho$ has a bounded Hessian and let $\phi_0 \in L^p(\mu)$ be the associated ground state (recall Section \ref{sec:qsd}). Then the potential $U - \ln \phi_0$ is strictly asymptotically convex.
\end{enumerate}
\end{proposition}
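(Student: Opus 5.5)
The plan is to reduce both statements to the propagation-of-weak-convexity results of \cite{chaintron25propagation} for Hamilton--Jacobi--Bellman flows. The first step is the ground-state transformation. Write $f^T(t,x)=f(T-t,x)$, where $f$ solves the forward problem \eqref{eq:pde_ffor} with $\LL^X=\frac12\Delta-\nabla U\cdot\nabla$. Conjugate by $e^{-U}$ and set $h(s,x)\coloneq e^{-U(x)}f(s,x)$. A direct computation gives
\[\partial_s h=\tfrac12\Delta h-(U_{\mathrm{eff}}+\rho)h, \qquad h(0,\cdot)=e^{-U}.\]
So $h$ is a Schrödinger (Feynman--Kac) flow for Brownian motion with potential $V\coloneq U_{\mathrm{eff}}+\rho$. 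Then $w\coloneq-\ln h=U-\ln f$ solves the HJB equation
\[\partial_s w=\tfrac12\Delta w-\tfrac12|\nabla w|^2+V, \qquad w(0,\cdot)=U.\]
Thus $U-\ln f^T(t,\cdot)=w(T-t,\cdot)$, and part (i) amounts to a lower bound on $\kappa_{w(s,\cdot)}$ that is uniform in $s\geq 0$.

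For part (i), I would invoke the main theorem of \cite{chaintron25propagation}. For the HJB flow above, with initial datum $U$ and running cost $V$, it gives the following: if $\kappa_U$ and $\kappa_V$ are both asymptotically positive and suitably controlled near $0$, then $\kappa_{w(s,\cdot)}\geq\bar\kappa$ for all $s\geq0$, with $\bar\kappa$ depending only on lower profiles of $\kappa_U$ and $\kappa_V$. The hypotheses are exactly Assumption \ref{ass:effective_potential}: item (i) gives $r\kappa_V(r)$ bounded below, item (ii) gives asymptotic strict convexity of $V$, and item (iii) gives the profile of the initial datum $U$ together with the integrability $\int_0^1 r\kappa_U^-<\infty$. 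I expect the main technical obstacle here. One has to check that the regularity in Assumption \ref{ass:langevin} (polynomial growth, $\nabla U$ Lipschitz) puts us in their class of admissible data, and that the $\bar\kappa$ produced inherits $\int_0^1 r\bar\kappa^-<\infty$. To handle this I would take $\bar\kappa=\min(\kappa_U,\tilde\kappa)$ near $0$, where $\tilde\kappa$ is their propagated profile. Their bound is stated through the stochastic control representation $w(s,x)=\inf_u\EE[\int_0^s(\frac12|u|^2+V)\,\diff r+U(X^u_s)]$, and it is uniform in the horizon $s$ precisely because $V$ is asymptotically strictly convex, which prevents the profile from degrading as $s\to\infty$. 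Uniformity in $T$ and $t$ then follows, since only $s=T-t$ enters.

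For part (ii), I would pass to the limit $s\to\infty$. By weak ergodicity (Assumption \ref{ass:ergodicity}), $e^{\lambda_0 s}f(s,\cdot)=e^{\lambda_0 s}\PPP^\rho_s\1\to\phi_0\langle\mu,\phi_0\rangle$ weakly. The bounded Hessian of $V$ should give uniform local gradient and Hessian bounds on $w(s,\cdot)$, for instance through the semiconcavity estimates in \cite{chaintron25propagation}. These would upgrade this to local uniform convergence of $w(s,\cdot)-\lambda_0 s\to U-\ln\phi_0-\ln\langle\mu,\phi_0\rangle$, together with convergence of gradients. Since additive constants do not affect $\kappa$, and the weak convexity profile is stable under locally uniform convergence of gradients (each quotient $\langle\nabla w(x)-\nabla w(y),x-y\rangle/|x-y|^2$ passes to the limit pointwise), we obtain $\kappa_{U-\ln\phi_0}\geq\bar\kappa$. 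Hence $U-\ln\phi_0$ is strictly asymptotically convex.
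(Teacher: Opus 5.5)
Part (i) is the paper's argument. Your conjugation $h=e^{-U}f$ is just the Schr\"odinger-form rewriting of $U-\ln\mathcal{S}_{T-t}\1$, and the paper applies \cite[Theorem 1.2]{chaintron25propagation} with $\varphi=\1$, reading $\bar\kappa$ off their Equation (5).

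One correction is needed there. Replacing the propagated profile $\tilde\kappa$ by $\min(\kappa_U,\tilde\kappa)$ near $0$ cannot secure $\int_0^1 r\bar\kappa^-(r)\,\diff r<\infty$, because $(\min(\kappa_U,\tilde\kappa))^-\geq\tilde\kappa^-$. That property has to come from the explicit form of the profile delivered by \cite{chaintron25propagation}. For instance, any bound $\inf_{r>0}r\bar\kappa(r)>-\infty$ (the kind of control that item (i) of Assumption \ref{ass:effective_potential} imposes on $U_{\mathrm{eff}}+\rho$) makes $r\bar\kappa^-(r)$ bounded and gives the integrability at once.

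Part (ii) takes a genuinely different route. The paper simply cites \cite[Theorem 1.3]{chaintron25propagation}. You instead obtain the claim as the $s\to+\infty$ limit of part (i), using two ingredients: the weak-$*$ convergence $e^{\lambda_0 s}\PPP^\rho_s\1\to\phi_0\langle\mu,\phi_0\rangle$ from Assumption \ref{ass:ergodicity}, and the stability of $\kappa_w\geq\bar\kappa$ under pointwise convergence of gradients. This buys the quantitative bound $\kappa_{U-\ln\phi_0}\geq\bar\kappa$, with the same profile as in (i).

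However, your compactness step is only asserted, and semiconcavity yields only an upper Hessian bound. An upper bound does suffice for convergence of gradients once you know that $w(s,\cdot)-\lambda_0 s$ converges locally uniformly to the $C^1$ function $U-\ln\phi_0-\ln\langle\mu,\phi_0\rangle$. But that local uniform convergence needs local two-sided bounds on $u(s,\cdot)=e^{\lambda_0 s}h(s,\cdot)$, uniformly in large $s$, and weak-$*$ convergence does not provide these by itself. You can get them as follows:
\begin{enumerate}
\item Test the weak convergence against $e^{U}\1_B\in L^{p/(p-1)}(\mu)$; note that $\int_B u(s,x)\,\diff x=\langle e^{\lambda_0 s}\PPP^\rho_s\1,e^{U}\1_B\rangle_\mu$.
\item Apply the parabolic local maximum principle, the Harnack inequality and interior Schauder estimates to $\partial_s u=\frac12\Delta u-(U_{\mathrm{eff}}+\rho-\lambda_0)u$.
\end{enumerate}
This gives local $C^2$ bounds on $-\ln u(s,\cdot)=w(s,\cdot)-\lambda_0 s$, and identifies the limit with a smooth positive version of $\phi_0$.

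Run this way, your argument needs only local smoothness of $U_{\mathrm{eff}}+\rho$ at this point, not its bounded Hessian. In the paper, the bounded Hessian enters only through the hypotheses of \cite[Theorem 1.3]{chaintron25propagation}. The paper's route, in exchange, is a one-line citation that outsources exactly this limiting and regularity argument.
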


\begin{proof}
For point (i), let us remark first that our function $f^T$, in the notation of \cite{chaintron25propagation}, reads as $f^T(t,\cdot) = \mathcal{S}_{T-t}\1$. Hence, it suffices to consider the initial condition $\varphi \coloneq \1$ in \cite[Theorem 1.2]{chaintron25propagation}, which, when plugged in Equation (5) in \cite{chaintron25propagation}, gives the desired conclusion. Point (ii) follows directly from Theorem \cite[Theorem 1.3]{chaintron25propagation}.
\end{proof}

\begin{remark}\label{rem:asymptotEntropy}
Notice that, by Theorem  \ref{thm:main_ii_0} (iii), the conclusion of point (i) in Proposition \ref{prop:conforti} equivalently reads
\[\forall T>0, \forall r > 0, \quad \kappa_{U + \HH(\widetilde{\PP}_{\cdot}^{T}|\PP_{\cdot}^{T})}(r) \geq \bar \kappa(r).\] 
\end{remark}

We can now prove Theorem \ref{thm:contraction_dynamics_intro}.

\begin{proof}[Proof of Theorem \ref{thm:contraction_dynamics_intro}]
Defining for each $T>0$ the family of potentials
\[\forall t \geq 0, \quad V_t(x) \coloneq  U(x) - \ln f^T(t\wedge T,x) \quad x\in \mathbb{R}^d,\]
in light of Proposition \ref{prop:conforti}, we can directly apply Theorem \ref{thm:eberle} and Remark \ref{rk:contr_w_1} to conclude.
\end{proof}

\subsection{Proofs of Theorems \ref{theo:domain_of_attraction} and \ref{theo:domain_of_attraction_W1}}
\label{subsec:proofs_fk}
We will need some basic facts about 
the Feynman-Kac semigroup $(\PPP_t^\rho)_{t \geq0}$, stated in the following lemma.

\begin{lemma}\label{lemma:basicFK}
Suppose  Assumption \ref{ass:langevin} holds. Then, the semigroup  $(\PPP_t^\rho)_{t \geq0}$  is symmetric with respect to $\mu$.  As a consequence, 
$\PPP_t^{\rho}\colon L^{p}(\mu)\to L^{p}(\mu)$  is a linear bounded operator for $p\in (1,\infty)$  if and only if it is a 
linear bounded operator $L^{q}(\mu)\to L^{q}(\mu)$ for $q=p/(p-1)$. Moreover, if $\nu \ll \mu$  then 
$\nu_T^T \ll \mu$  for all $T\geq 0$ with  $\frac{\diff \nu^T_T}{\diff \mu}= \langle \nu, P^{\rho}_T \mathbf{1} \rangle^{-1} P^{\rho}_T\frac{\diff \nu}{\diff \mu} $,  
and one has $\frac{\diff \nu^T_T}{\diff \mu}  \in L^q(\mu)$ as soon as $\frac{\diff \nu}{\diff \mu} \in L^q(\mu)$ and $\PPP_t^{\rho}: L^{q}(\mu)\to L^{q}(\mu)$ is bounded linear.

\end{lemma}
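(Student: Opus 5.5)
The plan is to establish symmetry first. Everything else then follows by duality and by unwinding the definition of $\nu_T^T$.

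\emph{Step 1: symmetry.} Under Assumption \ref{ass:langevin}, $X$ is a reversible diffusion with generator $\LL^X g = \frac12\Delta g - \nabla U\cdot\nabla g$, which is symmetric in $L^2(\mu)$ with $\diff\mu = e^{-2U}\diff x$. I will use the time-reversal property of the stationary process. Start $X$ from $\mu$, normalised to a probability measure; the normalisation is harmless since $\int e^{-2U}\diff x<\infty$. Then the path law on $[0,t]$ is invariant under $s\mapsto t-s$. Consequently, for nonnegative measurable $g,h$,
\[
\langle h,\PPP^\rho_t g\rangle_\mu=\EE_\mu\Big[h(X_0)g(X_t)e^{-\int_0^t\rho(X_s)\diff s}\Big]=\EE_\mu\Big[g(X_0)h(X_t)e^{-\int_0^t\rho(X_{t-s})\diff s}\Big]=\langle g,\PPP^\rho_t h\rangle_\mu .
\]
Reversibility itself comes from the classical fact that the gradient diffusion is $\mu$-symmetric. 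One can see this, for instance, from the Lipschitz drift and pathwise uniqueness, since the semigroup $\PPP_t$ is then the unique Markov semigroup and $\LL^X$ is essentially self-adjoint on $C_c^\infty$ by the Lipschitz/Gibbs assumption. The extension to general signed integrable functions follows by splitting into positive and negative parts. Alternatively, one can use the Trotter product formula to write $\PPP^\rho_t$ as a limit of products $(\PPP_{t/n}e^{-\rho t/n})^n$, each of which is symmetric up to ordering; I would cite the time-reversal argument as the cleaner route.

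\emph{Step 2: duality.} Take $p\in(1,\infty)$ and $q=p/(p-1)$, and suppose $\PPP^\rho_t$ is bounded on $L^p(\mu)$. For $g\in L^q\cap L^p$ (dense, e.g. bounded compactly supported functions), symmetry gives
\[
\|\PPP^\rho_t g\|_{L^q}=\sup_{\|h\|_p\le1}\langle \PPP_t^\rho g,h\rangle_\mu=\sup_{\|h\|_p\le1}\langle g,\PPP^\rho_t h\rangle_\mu\le\|\PPP^\rho_t\|_{p\to p}\|g\|_q .
\]
The operator then extends to $L^q$. This extension agrees with the pointwise definition, because $|\PPP^\rho_t g|\le \PPP^\rho_t|g|$ and monotone convergence apply. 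The converse direction is identical.

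\emph{Step 3: density of the conditioned marginal.} Take $g$ bounded. By definition,
\[
\langle\nu_T^T,g\rangle=\frac{\EE_\nu[g(X_T)\1_{\{\tau>T\}}]}{\PP_\nu(\tau>T)}=\frac{\langle\nu,\PPP^\rho_T g\rangle}{\langle \nu,\PPP^\rho_T\1\rangle},
\]
using Lemma \ref{lemma:killing_via_intensity}. If $\nu=h\mu$, symmetry (with nonnegative functions, so no integrability is needed) gives $\langle\nu,\PPP_T^\rho g\rangle=\langle h,\PPP^\rho_Tg\rangle_\mu=\langle \PPP^\rho_T h, g\rangle_\mu$. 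This identifies $\frac{\diff\nu_T^T}{\diff\mu}=\langle\nu,\PPP^\rho_T\1\rangle^{-1}\PPP^\rho_T h$; the denominator is positive since $\rho$ is locally bounded. The $L^q$ statement is then immediate from boundedness of $\PPP^\rho_T$ on $L^q(\mu)$.

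The main obstacle is Step 1: rigorously justifying $\mu$-reversibility of $X$ at the level of path laws with only the stated regularity. I would handle it by citing the standard result for gradient diffusions with locally Lipschitz drift and integrable $e^{-2U}$, combined with nonexplosion.
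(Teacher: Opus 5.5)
Your proposal is correct and follows the paper's own proof: symmetry of $\PPP^\rho_t$ via time reversal of the $\mu$-stationary diffusion, then a density/duality argument for the $L^p$--$L^q$ equivalence, then symmetry again to identify $\frac{\diff\nu_T^T}{\diff\mu}=\langle\nu,\PPP^\rho_T\1\rangle^{-1}\PPP^\rho_T\frac{\diff\nu}{\diff\mu}$. Your Step 3 applies symmetry directly to nonnegative functions, whereas the paper appeals to the $q=1$ case of the duality bound. Your version is a slightly cleaner route to absolute continuity, since it needs no a priori integrability.
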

\begin{proof}
For all $T \geq 0$ and  $g,h \in C_b(\mathbb{R}^d)$ we have
\[\langle \mu, g\PPP_T^\rho h\rangle = \EE_{\mu}\left[g(X_0) f(X_T) e^{-\int_0^T \rho(X_s)\diff s}\right] = \EE_{\mu}\left[g(X_T) h(X_0) e^{-\int_0^T \rho(X_{T-s})\diff s}\right] = \langle \mu, h\PPP_T^\rho g\rangle.\]
Consequently, for all $h\in  C_b(\mathbb{R}^d)\cap  L^{q}(\mu)$ and $g\in  C_b(\mathbb{R}^d)\cap  L^{p}(\mu)$ 
$$ |\langle \mu, g (\PPP_T^\rho h) \rangle |\leq \norm{ h}_{ L^{q}(\mu)}  \norm{\PPP_T^\rho g }_{ L^{p}(\mu)}\leq C  \norm{ h}_{ L^{q}(\mu)}  \norm{ g }_{ L^{p}(\mu)}  $$
and the second statement follows by a density and duality argument. The previous argument and bound also hold for $q=1$  implying that $\nu_T^T \ll \mu$ for all $T\geq 0$ whenever $\nu \ll \mu$, with  $\frac{\diff \nu^T_T}{\diff \mu}= \langle \nu, P^{\rho}_T \mathbf{1} \rangle^{-1}   P^{\rho}_T\frac{\diff \nu}{\diff \mu}$. The last claim is  now immediate.

\end{proof}

\begin{proof}[Proof of Theorem \ref{theo:domain_of_attraction}]

In the proof of equivalence of (a) and (b), we will use the fact that for any $\nu \in {\cal P}(\R^d)$, $g \in C_\mathrm{b}(\R^d)$, and $\phi_0 \in L^1(\mu)\cap L^1(\nu)$, one has

\begin{align}\label{eq:nuPTgalpha}
\frac{\langle \nu, \PPP_T^\rho g\rangle}{\langle \nu, \PPP_T^\rho \1\rangle} - \langle \alpha, g\rangle  \notag
&= \frac{\langle\nu, e^{\lambda_0 T} \PPP_T^\rho g - \phi_0 \langle \mu, \phi_0 \rangle  \langle \alpha, g \rangle \rangle}{\langle \mu, \phi_0 \rangle \langle \nu, \phi_0 \rangle + \langle \nu, e^{\lambda_0 T} \PPP_T^\rho \1 - \phi_0 \langle \mu, \phi_0 \rangle \rangle}  \\ & \qquad - \frac{\langle\alpha, g \rangle \langle \nu, e^{\lambda_0 T} \PPP_T^\rho \1 - \phi_0  \langle \mu, \phi_0 \rangle \rangle}{\langle \mu, \phi_0 \rangle \langle \nu, \phi_0 \rangle + \langle \nu, e^{\lambda_0 T} \PPP_T^\rho \1 - \phi_0 \langle \mu, \phi_0 \rangle \rangle}. 
\end{align}

\noindent \underline{(a) $\Rightarrow$ (b)}: Taking $\lambda_0$ and $\phi_0$ as in the statement, we need to prove that properties (i),  and (iii) in 
Definition \ref{def:fk_ergodicity} hold with these objects.  

\noindent (i) Since $\alpha$ in \eqref{eq:alphadef} is a QSD for $ (\PPP_t^\rho)_{t \geq0}$, we
have
$\langle \alpha, \PPP^\rho_T g \rangle= \langle \alpha, g \rangle  \langle \alpha, \PPP_T^\rho \1 \rangle $ for all bounded measurable function $g$. Moreover, 
it is well known (see e.g. \cite{MR2994898, MR2986807}) that  $\tilde{\lambda}_0 \coloneq -\frac{\ln \langle \alpha, \PPP_T^\rho \1 \rangle }{T}$ does not depend on $T$. Hence, using the reversibility of $\mu$ with respect to $ (\PPP_t^\rho)_{t \geq0}$ we get, for all $t\geq 0$,
$$ \PPP^\rho_t \phi_0 = e^{-\tilde{\lambda}_0 t} \phi_0, \quad \mu \mbox{-a.s.} $$
Now, since from (a) we have
$   \lim_{t \to +\infty} e^{\lambda_0 t} \PPP_t^\rho \mathbf{1}=\phi_0 \langle \mu, \phi_0 \rangle   $
     weakly-$*$ in $L^2(\mu)$, we get, thanks again to the reversibility, 
     $$  e^{\lambda_0 T} \langle \mathbf{1} ,  \PPP_T^\rho  \phi_0 \rangle_{\mu} = e^{\lambda_0 T} \langle \PPP_T^\rho \mathbf{1} , \phi_0 \rangle_{\mu}\xrightarrow[T \to +\infty]{} \langle \mu, \phi_0^2\rangle \langle \mu, \phi_0 \rangle = \langle \mu, \phi_0 \rangle.$$
     Combined with the previous, this implies that 
     $$  e^{(\lambda_0-\tilde{\lambda}_0) T}  \langle \mu, \phi_0 \rangle \xrightarrow[T \to +\infty]{} \langle \mu, \phi_0 \rangle . $$
     Whence, $\lambda_0=\tilde{\lambda}_0$ and property (i) holds. 

\noindent (iii) For any $\nu$ as in statement a), we have   $\frac{\diff \nu}{\diff \mu} \in L^q(\mu)$ for  $q : = p/(p-1)$. Thanks to this and the convergence $   \lim_{t \to +\infty} e^{\lambda_0 t} \PPP_t^\rho \mathbf{1}=\phi_0 \langle \mu, \phi_0 \rangle   $
     weakly-$*$ in $L^p(\mu)$, for $\lambda_0$ and $\phi_0$ as above the second term on the right hand side of \eqref{eq:nuPTgalpha} goes to $0$ as $T\to +\infty$ for  $g\in L^p(\mu)$. Moreover, the left hand side goes to $0$ for such functions $g$ too, since $\nu$ is in the weak-$L^q(\mu)$ domain of attraction  of $\alpha$. Since 
 \begin{equation}\label{eq:mumumu} \langle \mu, \phi_0 \rangle  \langle \alpha, g \rangle  = \langle \mu,  \phi_0 g \rangle =  \langle g, \phi_0\rangle_{\mu}, 
 \end{equation} we deduce from 
\eqref{eq:nuPTgalpha} that 
$$\langle\nu, e^{\lambda_0 T} \PPP_T^\rho g - \phi_0   \langle  g, \phi_0  \rangle_{\mu} \rangle \xrightarrow[T \to +\infty]{} 0$$
This readily yields  
$\lim_{t \to +\infty} e^{\lambda_0 t} \PPP_t^\rho g =  \phi_0 \langle \phi_0, g \rangle_{\mu}$  weakly-* in $ L^p(\mu)$.

\noindent \underline{(b) $\Rightarrow$ (a)}: 
The existence of $\lambda_0$ and $\phi_0$ with the required properties is included in points (i) and (iii) of Definition \ref{def:fk_ergodicity},  taking in the latter $g=\phi_0$ the ground state.  Let us now check that $\alpha$  defined as $\diff \alpha \coloneq \phi_0 / \langle \mu, \phi_0 \rangle \diff \mu $  is a QSD. Indeed,
since the measure $\mu$ is reversible for $(\PPP_t^\rho)_{t \geq 0}$  by Lemma \ref{lemma:basicFK}, we have for  any  $g \in C_\mathrm{b}(\R^d)$ that 
\begin{align*}
\langle \alpha_T^T, g \rangle &= \frac{\langle \alpha, \PPP^\rho_T g \rangle}{\langle \alpha, \PPP_T^\rho \1 \rangle} = \frac{\langle \mu, \phi_0 \PPP^\rho_T g \rangle}{\langle \mu, \phi_0 \PPP_T^\rho \1 \rangle} = \frac{\langle \mu, g \PPP^\rho_T\phi_0 \rangle}{\langle \mu, \PPP^\rho_T \phi_0 \rangle} = \frac{e^{-\lambda_0 T}\langle \mu, g \phi_0 \rangle}{e^{-\lambda_0 T}\langle \mu, \phi_0 \rangle} = \langle \alpha, g \rangle.
\end{align*}
It remains us to show that any  $\nu \in \mathcal{P}(\R^d)$  such that $\nu \ll \mu$  with   $\frac{\diff \nu}{\diff \mu} \in L^q(\mu)$ for  $q \geq p/(p-1)$, is in the weak-$L^q(\mu)$ domain of attraction  of $\alpha$. Notice that $p$-ergodicity for a given $p\geq 2$ implies $p'$-ergodicity  holds too for all $p'\in [2,p]$. Hence, it is enough to prove this assertion for $q$ the H\"older conjugate of $p$.  Observe to that end that, by \eqref{eq:mumumu}, 
 property (iii) of the Definition \ref{def:fk_ergodicity} of $p$-ergodicity ensures that quantities
$$ \langle\nu, e^{\lambda_0 T} \PPP_T^\rho g - \phi_0  \langle \mu, \phi_0 \rangle \langle \alpha, g \rangle \rangle = \int \left[ e^{\lambda_0 T} \PPP_T^\rho g (x)- \phi_0(x) \langle g, \phi_0\rangle_{\mu} \right ]\frac{\diff \nu}{\diff \mu} (x) \diff \mu(x)$$ 
 go to  $0$ as $T \to +\infty$ for all  $g \in L^p(\mu)$. Thus, the two terms on the right-hand side of \eqref{eq:nuPTgalpha} go to $0$. This yields that 
\begin{equation}\label{eq:convnunualpha} \langle \nu_T^T, g\rangle = \frac{\langle \nu, \PPP_T^\rho g\rangle}{\langle \nu, \PPP_T^\rho \1\rangle}\xrightarrow[T\to +\infty]{}   \langle \alpha, g\rangle   
\end{equation}
for all  $g \in L^p(\mu)$. Since property (ii) in (b) together with the last claim in Lemma \ref{lemma:basicFK} ensure that $\frac{\diff \nu^T_T}{\diff \mu}  \in L^q(\mu)$, we conclude from  \eqref{eq:convnunualpha} that $\nu$
is the the weak-$L^q(\mu)$ domain of attraction  of $\alpha$. 

For the last assertions, note that we already established that $\tilde{\lambda}_0=\lambda_0$, and that uniqueness of the QSD   follows readily from \eqref{eq:convnunualpha},  taking therein $\nu=\alpha'$ for  $\alpha' \in {\cal P}(\R^d)$ a second QSD satisfying the required conditions of absolute continuity and integrability. 

\end{proof}

Recall the asymptotic convexity profile $\kappa_U$  from Definition \ref{def:weak_convexity_profile}. We next show that strict asymptotic convexity of the potential $U$ ensures the finiteness of all moments of the invariant distribution $\mu$, a property that will be very useful in the proof of  Theorem \ref{theo:domain_of_attraction_W1}.

\begin{lemma}
 \label{lemma:exponential_moments}
 Suppose  that $U\colon \R^d \to \R$ is of class $C^2$ and strictly asymptotically convex (i.e.,
    $\liminf_{r \to +\infty} \kappa_{U}(r) > 0$), and that
   $\int_0^1 r(\kappa_U(r))^-\diff r < +\infty$.   Then, for every $\gamma \in \R$, we have $\int_{\R^d} e^{\gamma \abs{x}- U(x)} \diff x < + \infty$.
 \end{lemma}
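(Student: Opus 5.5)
The plan is to show that $U$ grows at least quadratically at infinity. Once that is established, $e^{\gamma\abs{x}-U(x)}$ is dominated by a Gaussian outside a compact set, and the integral is finite. Everything follows from a lower bound of the form
\[
U(x)\geq \tfrac{a}{4}\abs{x}^2 - C(1+\abs{x})
\]
for some $a>0$ and $C<\infty$. The integrability assumption $\int_0^1 r(\kappa_U(r))^-\diff r<+\infty$ will not really be needed for this. Only $C^2$ regularity and the behavior of $\kappa_U$ on bounded-away-from-zero scales enter.

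First, fix $a>0$ and $R>0$ such that $\kappa_U(r)\geq a$ for all $r\geq R$; this is possible by strict asymptotic convexity. Take $y=0$ in the definition of $\kappa_U$. For $\abs{x}\geq R$ this gives
\[
\langle \nabla U(x)-\nabla U(0),x\rangle\geq a\abs{x}^2,
\]
so $\langle\nabla U(x),x\rangle\geq a\abs{x}^2-\abs{\nabla U(0)}\abs{x}$.

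Next, integrate along rays. For a unit vector $e$ and $s\geq R$, set $h(s)=U(se)$. Then $h'(s)=\langle\nabla U(se),e\rangle\geq as-\abs{\nabla U(0)}$. Hence, for $s\geq R$,
\[
h(s)\geq h(R)+\tfrac a2(s^2-R^2)-\abs{\nabla U(0)}(s-R).
\]
Since $U$ is continuous, $h(R)\geq \min_{\abs{z}=R}U(z)>-\infty$, and this bound is uniform in $e$. This yields the quadratic lower bound above.

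Finally, split $\R^d$ into the ball $\{\abs{x}\leq R\}$ and its complement. On the ball the integrand is continuous, so its integral is finite. On the complement,
\[
e^{\gamma\abs{x}-U(x)}\leq C'\exp\bigl(-\tfrac a2\abs{x}^2+(\abs{\gamma}+\abs{\nabla U(0)})\abs{x}\bigr),
\]
which is integrable. The same argument works verbatim with $U$ replaced by $2U$, which covers the moments of $\mu=e^{-2U}\diff x$ used later.

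There is no serious obstacle. The only delicate point is to use a single reference point, $y=0$, in the weak convexity profile, so that $\abs{x-y}=\abs{x}$ is large. The profile only controls pairs at distance exactly $r$, and that is exactly what we get for $\abs{x}\geq R$.
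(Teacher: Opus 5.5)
Your proof is correct, and after the first step it takes a different route from the paper.

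Both arguments start the same way: they extract radial coercivity $\langle\nabla U(x),x\rangle\gtrsim\abs{x}^2$ for large $\abs{x}$ from $\liminf_{r\to+\infty}\kappa_U(r)>0$. You make the pairing with $y=0$ and the resulting $\nabla U(0)$ term explicit, whereas the paper leaves this implicit.

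From there the two proofs diverge. The paper quotes an external lemma for a global Hessian bound. It then uses the Lagrange-form expansion $U(x)=U(0)+\langle\nabla U(x),x\rangle-\frac12\langle x,\nabla^2U(t_0x)x\rangle$ and bounds the two terms separately. You instead integrate the radial derivative outward from the sphere $\{\abs{x}=R\}$, using only continuity of $U$ on that sphere.

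Your route has several advantages:
\begin{enumerate}
\item It is self-contained and uses only $C^1$ regularity.
\item It shows that the hypothesis $\int_0^1 r(\kappa_U(r))^-\diff r<+\infty$ plays no role.
\item It is more reliable. The Hessian bound as stated in the paper, $\langle x,\nabla^2U(y)x\rangle\leq-\alpha_R\abs{x}^2$ for all $x,y$, would make $U$ uniformly concave, so it cannot hold under asymptotic convexity.
\end{enumerate}
What the Lagrange-form step actually needs is an upper bound $\nabla^2U\leq\beta$, which is available from Assumption \ref{ass:langevin}. Even with it, bounding the terms separately only gives $U(x)\geq U(0)+\frac12(\alpha-\beta)\abs{x}^2$. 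This is vacuous unless $\alpha>\beta$; for example, it fails for $U(x)=\frac12(x_1^2+100x_2^2)$. Integrating $\langle\nabla U(sx/\abs{x}),x/\abs{x}\rangle$ in $s$, as you do, avoids this loss entirely.
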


 \begin{proof}
 We adapt the proof \cite[Proposition B.1]{silveri2025beyond} to our case. First, from strict asymptotic convexity, there exist $\alpha > 0$ and $R > 0$ such that
 \begin{equation*}
 \forall x \in \R^d \text{ with } \abs{x} > R, \quad \langle \nabla U(x), x\rangle \geq \frac{\alpha}{2} \abs{x}^2.
 \end{equation*}
 By \cite[Lemma 2.2]{MR2165401}, the former implies that there exists $\alpha_R > 0$ with 
 \[\forall y \in \R^d, \forall x \in \R^d, \quad  \langle x, \nabla^2 U(y) x \rangle  \leq -\alpha_R \abs{x}^2.\]
 For $x \in \R^d$, there exists $t_0 \in [0,1]$ such that
 \begin{equation}
 U(x) = U(0) + \langle\nabla U(x), x\rangle - \frac12 \langle x, \nabla^2 U(t_0 x) x \rangle.
 \end{equation}
 Therefore, for $x \in \R^d$ with $\abs{x} > R$ and $\gamma \in \R$,
 \begin{align*}
e^{\gamma\abs{x} - U(x)} = e^{\gamma\abs{x} -U(0) - \langle\nabla U(x), x\rangle + \frac12 \langle x, \nabla^2 U(t_0 x) x \rangle}\leq e^{\gamma\abs{x} -U(0) - \frac12(\alpha + \alpha_R)\abs{x}^2},
 \end{align*}
 from where
 \[\int_{\R^d} e^{\gamma \abs{x}- U(x)} \diff x = \int_{\bar{\mathrm{B}}(0,R)} e^{\gamma \abs{x}- U(x)} \diff x + \int_{\R^d \setminus \bar{\mathrm{B}}(0,R)} e^{\gamma \abs{x}- U(x)} \diff x < +\infty.\]
 \end{proof}

\begin{proof}[Proof of Theorem \ref{theo:domain_of_attraction_W1}]

By Corollary \ref{coro:contractionconseq} we have
\begin{equation}
\label{eq:proof_contr_aux_1}
\W_1(\nu_T^T, \alpha) = \W_1(\nu_T^T, \alpha_T^T) \leq C^*e^{-c^*T}\W_1(\nu^T, \alpha^T)
\end{equation}
for some constants $C^*,c^* > 0$ independent of $T$. In the case that $\nu=\delta_x$, we have $\nu^T=\delta_x$ for all $T>0$ and it is enough to prove that the first order moments of the family $(\alpha^T)_{T>0}$ are uniformly bounded. That general property does of course not depend on $\nu$ and will be established as a consequence of the next discussion. 

Given $\eta\in \mathcal{P}(\R^d)$ a probability measure such that $\eta \ll \mu$ and $\langle \eta, \phi_0\rangle<+\infty$, denote by $\eta^\infty \in \mathcal{P}(\R^d)$  the probability  measure
\[\forall g \in C_\mathrm{b}(\R^d), \quad \langle \eta^\infty, g \rangle \coloneq \frac{\langle \eta, \phi_0 g \rangle}{\langle \eta, \phi_0\rangle}.\]
Then we have 
\begin{equation}
\label{eq:proof_contr_aux_2}
\W_1(\nu^T, \alpha^T) \leq \W_1(\nu^T, \nu^\infty) + \W_1(\nu^\infty, \alpha^{\infty}) + \W_1( \alpha^{\infty}, \alpha^T).
\end{equation}
Let now $g$ be a measurable function such that  $ |g|\frac{\diff \nu}{\diff \mu} \in L^m(\mu)$ 
with $m=p/(p-1)$ the H\"older conjugate of $p$. Then,  
$$ \langle\nu , g e^{\lambda_0 T} \PPP_T^\rho \1 - g \phi_0  \langle \mu, \phi_0 \rangle  \rangle = \int \left[ e^{\lambda_0 T} \PPP_T^\rho \1 (x)- \phi_0(x) \langle \1, \phi_0\rangle_{\mu} \right ]g(x) \frac{\diff \nu}{\diff \mu} (x) \diff \mu(x)     
$$  
which  goes to $0$ as $T\to +\infty$ by Assumption \ref{ass:ergodicity}. Since $\frac{\diff \nu}{\diff \mu} \in L^q(\mu) $ with $q>m$, this holds in particular for
$g\in  C_\mathrm{b}(\R^d)$ and we deduce for all such $g$ that  
\begin{equation}\label{eq:convenuTnuinfty}
\langle \nu^T, g\rangle= \frac{\langle \nu, g e^{\lambda_0 T} \PPP_T^\rho\1\rangle}{\langle \nu, e^{\lambda_0 T} \PPP_T^\rho \1\rangle} \xrightarrow[T \to +\infty]{} \frac{\langle \nu, g \phi_0\rangle}{\langle \nu, \phi_0\rangle} = \langle \nu^{\infty}, g\rangle.  
\end{equation}
 Moreover,  by Lemma
 \ref{lemma:exponential_moments}, the function $g=\norm{\cdot}$ is in $L^r(\mu) $ for $r^{-1}=m^{-1}-q^{-1}>0$, and so   $ g\frac{\diff \nu}{\diff \mu} \in L^m(\mu)$ and \eqref{eq:convenuTnuinfty} hold in this case too. We deduce that  $\W_1(\nu^T, \nu^\infty) \xrightarrow[T \to +\infty]{} 0$. 
 
 Since $p>2$, we have $q\coloneq p>m=p/(p-1)$, and the precedent argument applies as well to $\alpha$ in the role of $\nu$. This ensures that   $\W_1(\alpha^T, \alpha^\infty) \xrightarrow[T \to +\infty]{} 0$.  
 
 The first asserted bound follows then  from the previous convergences in $\W_1$ combined with \eqref{eq:proof_contr_aux_1}  and \eqref{eq:proof_contr_aux_2}. The second bound holds taking $T_{\nu}>0$  large enough so that $C^* \W_1(\alpha^T, \alpha^\infty)+ C^*\W_1(\nu^T, \nu^\infty) \leq (C'-C^*)  \W_1(\alpha^\infty, \nu^\infty)  $ for all $T\geq T_{\nu}$. 
\end{proof}

\subsection{Proofs of Proposition \ref{prop:q_process} and Theorem \ref{prop:q_process_exp_fast_intro}}
\label{subsec:proofs_qprocess}

\begin{proof}[Proof of Proposition \ref{prop:q_process}]
Let $\nu \ll \mu$ with $\frac{\diff \nu}{\diff \mu} \in L^{p/(p-1)}(\mu)$, and take $S, T \geq 0$ and $F\in C_\mathrm{b}(\bar{\mathbb{D}}_T)$. Then
\begin{align*}
\widetilde{\EE}_\nu^{T+S}[F(X_{[0,T]})] &= \frac{1}{\PP_\nu^{T+S}(N_{T+S}=0)}\EE_\nu^{T+S}[F(X_{[0,T]})\1_{N_{T+S} = 0}]\\
&= \frac{1}{\PP_\nu^{T+S}(N_{T+S}=0)}\EE_\nu^{T+S}[F(X_{[0,T]})\EE_\nu^{T+S}[\1_{N_{T+S} = 0}|\bar{\mathcal{F}}_T]].
\end{align*}
By an argument similar to the one found in the proof of Lemma \ref{lemma:computation_Z}, using the Markov property, we have
\[\EE_\nu^{T+S}[\1_{N_{T+S} = 0}|\bar{\mathcal{F}}_T] = \EE_{X_T}\left[e^{-\int_0^S \rho(X_r)\diff r}\right]\1_{\{N_T=0\}} = f^S(0, X_T) \1_{\{N_T=0\}}.\]
That is,
\begin{align*}
\widetilde{\EE}_\nu^{T+S}[F(X_{[0,T]})] &= \frac{1}{\PP_\nu^{T+S}(N_{T+S}=0)} \EE_\nu^{T+S}\left[F(X_{[0,T]})f^S(0, X_T) \1_{\{N_T=0\}}\right]\\
&= \frac{1}{\int_{\R^d} f^{T+S}(0,x)\diff \nu(x)} \EE_\nu^{T+S}\left[F(X_{[0,T]})f^S(0, X_T) \1_{\{N_T=0\}}\right]\\
&= e^{\lambda_0 T}\frac{\EE_\nu^T\left[F(X_{[0,T]})e^{\lambda_0 S}f^S(0, X_T) \1_{\{N_T=0\}}\right]}{\int_{\R^d} e^{\lambda_0 (T+S)}f^{T+S}(0,x)\diff \nu(x)}.
\end{align*}
By the weak $p$-ergodicity assumption and the integrability of $\nu$, we have
\[\widetilde{\EE}_\nu^{T+S}[F(X_{[0,T]})] \xrightarrow[S \to +\infty]{} e^{\lambda_0 T} \frac{\EE_\nu^T\left[F(X_{[0,T]})\phi_0(X_T) \1_{\{N_T=0\}}\right]}{\int_{\R^d} \phi_0(x) \diff \nu(x)} \eqcolon \langle \QQ^T_\nu, F \rangle.\]
That is,
\[\frac{\diff \QQ^T_\nu}{\diff \PP_\nu^T} = \frac{e^{\lambda_0 T}\phi_0(X_T) \1_{\{N_T=0\}}}{\int_{\R^d} \phi_0(x) \diff \nu(x)}.\]
\end{proof}

Associated with the Q-process is the Q-semigroup, which we now define.

\begin{definition}
\label{def:q_semigroup}
We define the \textit{Q-semigroup} $(\QQQ^\rho_t)_{t \geq 0}$ acting on sufficiently integrable functions $g \colon \R^d \to \R$ as follows:
\[\forall t \geq 0, \forall x \in \R^d, \quad \QQQ^\rho_t g (x) \coloneq e^{\lambda_0 t} \frac{\PPP_t^\rho(g\phi_0)(x)}{\phi_0(x)}.\]
\end{definition}

One can see the Q-semigroup as the semigroup of the process conditioned to never be absorbed. More precisely, it is possible to rewrite and adapt the proof of Proposition \ref{prop:q_process} to make appear the Feynman-Kac semigroup for an observable $g \in C_\mathrm{b}(\R^d)$ as follows at a formal level: let $x \in \R^d$, and $s,t \geq 0$. Then
\begin{align*}
\frac{\EE_x\left[g(X_t)e^{-\int_0^{t+s}\rho(X_r)\diff r}\right]}{\EE_x\left[e^{-\int_0^{t+s}\rho(X_r)\diff r}\right]} &= \frac{\PPP_t^\rho(g \PPP_s^\rho\1)(x)}{\PPP_{t}^\rho(\PPP_s^\rho \1)(x)} = \frac{\PPP_t^\rho(g e^{\lambda_0 s} \PPP_s^\rho\1)(x)}{\PPP_{t}^\rho(e^{\lambda_0 s}\PPP_s^\rho \1)(x)}.
\end{align*}
By weak $p$-ergodicity, we have $e^{\lambda_0 s} \PPP_s^\rho\1 \xrightarrow[s \to + \infty]{} \langle \mu, \phi_0 \rangle \phi_0$, so
\[\lim_{s \to + \infty} \frac{\EE_x\left[g(X_t)e^{-\int_0^{t+s}\rho(X_r)\diff r}\right]}{\EE_x\left[e^{-\int_0^{t+s}\rho(X_r)\diff r}\right]} = \frac{\PPP_t^\rho(g\phi_0)(x)}{\PPP_t^\rho\phi_0(x)} = e^{\lambda_0 t} \frac{\PPP_t^\rho(g\phi_0)(x)}{\phi_0(x)} = \QQQ_t^\rho g(x).\]

We remark that the Q-semigroup is a Markov semigroup since for every $t \geq 0$,
\[\QQQ_t\1 = e^{\lambda_0 t} \frac{\PPP_t^\rho(\phi_0)}{\phi_0} = e^{\lambda_0 t} \frac{e^{-\lambda_0 t}\phi_0}{\phi_0} = \1.\]
Its infinitesimal generator $\LL^\mathrm{Q}$ acts on suitable functions $g \colon \R^d \to \R$ via
\[\LL^\mathrm{Q}f(x) = \lambda_0 f + \frac{1}{\phi_0} \LL^\rho(f\phi_0)\]
and is symmetric with respect to the measure $\diff \beta \coloneq \phi_0^2 \diff \mu = \phi_0 \diff \alpha$. Indeed, let $g, h \colon \R^d \to \R$ be smooth test functions. Then, by the symmetry of $\LL^\rho$ with respect to $\mu$,
\begin{align*}
\int_{\R^d} g\LL^\mathrm{Q} h \diff \beta &= \int_{\R^d} g \left(\lambda_0 h + \frac{1}{\phi_0} \LL^\rho(h\phi_0)\right) \phi_0^2 \diff \mu = \lambda_0 \int_{\R^d} hg \phi_0^2 \diff \mu + \int_{\R^d} g \phi_0 \LL^\rho(h \phi_0) \diff \mu\\
&= \lambda_0 \int_{\R^d} hg \phi_0^2 \diff \mu + \int_{\R^d} \LL^\rho(g \phi_0) h \phi_0 \diff \mu = \int_{\R^d} h \left(\lambda_0 g + \frac1{\phi_0}\LL^\rho(g \phi_0)\right) \phi_0^2 \diff \mu\\
&= \int_{\R^d} h\LL^\mathrm{Q} g \diff \beta.
\end{align*}
Now let us write the operator $\LL^\mathrm{Q}$ in a nicer form: let $g \colon \R^d \to \R$. By the diffusion property of $\LL$ and the fact that $\phi_0$ is an eigenfunction for $\LL^\rho$, we have
\begin{align*}
\LL^\mathrm{Q} g &= \lambda_0 g + \frac{1}{\phi_0}\left(g \LL \phi_0 + \phi_0 \LL g + \nabla g \cdot \nabla \phi_0 - \rho g \phi_0\right)\\
&= \LL g + \nabla \ln \phi_0 \cdot \nabla g + \left(\lambda_0 + \frac{\LL\phi_0}{\phi_0} - \rho\right)g\\
&= \LL g + \nabla \ln \phi_0 \cdot \nabla g + \frac{1}{\phi_0}\left(\LL^\rho \phi_0 + \lambda_0 \phi_0\right) g\\
&= \LL g + \nabla \ln \phi_0 \cdot \nabla g\\
&= \frac12 \Delta g - \nabla (U - \ln \phi_0) \cdot \nabla g;
\end{align*}
that is,
\begin{equation}
\label{eq:generator_q_process}
\LL^\mathrm{Q} g = \frac12 \Delta g - \nabla (U - \ln \phi_0) \cdot \nabla g.
\end{equation}

Now we are ready to prove Theorem \ref{prop:q_process_exp_fast_intro}.

\begin{proof}[Proof of Theorem \ref{prop:q_process_exp_fast_intro}]
In light of Theorem \ref{thm:eberle} and part (ii) in Lemma \ref{prop:conforti}, due to our assumptions, the conclusion follows directly.
\end{proof}

\bibliographystyle{alpha}
\bibliography{biblio.bib}

\end{document}